\let\OLDthebibliography\thebibliography
\renewcommand\thebibliography[1]{
	\OLDthebibliography{#1}
	\setlength{\parskip}{0pt}
	\setlength{\itemsep}{2pt} 
}
\theoremstyle{definition}
\newtheorem{df}{Definition}[section]
\newtheorem{eg}[df]{Example}
\newtheorem{rem}[df]{Remark}
\newtheorem{ass}[df]{Assumption}
\theoremstyle{plain}
\newtheorem{thm}[df]{Theorem}
\newtheorem{pp}[df]{Proposition}
\newtheorem{co}[df]{Corollary}
\newtheorem{lm}[df]{Lemma}
\newcommand{\fk}{\mathfrak}
\newcommand{\mc}{\mathcal}
\newcommand{\wtd}{\widetilde}
\newcommand{\wch}{\widecheck}
\newcommand{\ovl}{\overline}
\newcommand{\Tr}{\mathrm{Tr}}
\newcommand{\End}{\mathrm{End}} 
\newcommand{\id}{\mathbf{1}}
\newcommand{\Hom}{\mathrm{Hom}}
\newcommand{\Conf}{\mathrm{Conf}}
\newcommand{\Res}{\mathrm{Res}}
\newcommand{\Span}{\mathrm{Span}}
\newcommand{\bk}[1]{\langle {#1}\rangle}
\newcommand{\scr}{\mathscr}
\newcommand{\im}{\mathbf{i}}
\newcommand{\sgm}{\varsigma}
\newcommand{\SX}{{S_{\fk X}}}
\newcommand{\mbb}{\mathbb}
\newcommand{\mbf}{\mathbf}
\newcommand{\blt}{\bullet}
\newcommand{\Vbb}{\mathbb V}
\newcommand{\Ubb}{\mathbb U}
\newcommand{\Wbb}{\mathbb W}
\newcommand{\Mbb}{\mathbb M}
\newcommand{\Gbb}{\mathbb G}
\newcommand{\Cbb}{\mathbb C}
\newcommand{\Nbb}{\mathbb N}
\newcommand{\Zbb}{\mathbb Z}
\newcommand{\Pbb}{\mathbb P}
\newcommand{\Ebb}{\mathbb E}
\newcommand{\cbf}{\mathbf c}
\newcommand{\wt}{\mathrm{wt}}
\newcommand{\btl}{\blacktriangleleft}
\newcommand{\btr}{\blacktriangleright}
\newcommand{\SXb}{{S_{\fk X_b}}}
\newcommand{\pr}{\mathrm {pr}}
\newcommand{\SXtd}{S_{\wtd{\fk X}}}
\numberwithin{equation}{section}
\title{Sewing and Propagation of Conformal Blocks}
\author{{\sc Bin Gui}
}
\date{}
\begin{document}\sloppy 
	\pagenumbering{arabic}

	\maketitle

\newcommand\blfootnote[1]{%
	\begingroup
	\renewcommand\thefootnote{}\footnote{#1}%
	\addtocounter{footnote}{-1}%
	\endgroup
}


\tableofcontents





	
	

	

\newpage

\begin{abstract}
Propagation is a standard way of producing certain new conformal blocks from old ones that corresponds to the geometric procedure of adding new  distinct points to a pointed compact Riemann surface. On the other hand, sewing conformal blocks corresponds to sewing compact Riemann surfaces. 

In this article, we clarify the relationships between these two procedures. Most importantly, we show that "sewing and propagation are commuting procedures". More precisely: let $\upphi$ be a conformal block associated to a vertex operator algebra $\Vbb$ and a compact Riemann surface to be sewn, and let $\wr^n\upphi$ be its $n$-times propagation. If the  sewing $\wtd{\mc S}\upphi$  converges, then $\wtd{\mc S}\wr^n\upphi$ (the sewing of $\wr^n\upphi$) automatically converges, and it equals $\wr^n\wtd{\mc S}\upphi$ (the $n$-times propagation of the sewing $\wtd{\mc S}\upphi$). 

The proof of this result relies on establishing the propagation of conformal blocks associated to \textit{holomorphic} families of compact Riemann surfaces. We prove this in our paper using the idea that ``propagation is itself a sewing followed by an analytic continuation". This result generalizes previous ones on single Riemann surfaces \cite{Zhu94,FB04}, and supplements those on \textit{algebraic} families of complex algebraic curves \cite{Cod19,DGT19a}.

The results in this paper will be used in \cite{Gui21} as the main technical tools to relate the (genus-$0$) permutation-twisted $\Vbb^{\otimes k}$-conformal blocks (i.e. intertwining operators) and the untwisted $\Vbb$-conformal blocks (of possibly higher genera).
\end{abstract}

\section{Introduction}

\subsection*{Propagating conformal blocks}

Let $\Vbb$ be a vertex operator algebra (VOA) with vacuum vector $\id$. Let $\fk X=(C;x_1,\dots,x_N;\eta_1,\dots,\eta_N)$ be an $N$-pointed compact Riemann surface with local coordinates, namely, each connected component of the compact Riemann surface $C$ contains at least one of the distinct marked points $x_1,\dots,x_N$, and each $\eta_j$ is an injective holomorphic function on a neighborhood of $x_j$ sending $x_j$ to $0$ (i.e., an (analytic) local coordinate at $x_j$). Associate to each $x_j$ a $\Vbb$-module $\Wbb_j$. Then a conformal block  associated to $\fk X$ and $\Wbb_\blt=\Wbb_1\otimes\cdots\otimes\Wbb_N$ is a linear functional $\upphi:\Wbb_\blt\rightarrow\Cbb$ ``invariant" under the actions of $\Vbb$ (Cf. \cite{Zhu94,FB04,DGT19a}). When $C$ is the Riemann sphere $\Pbb^1$, the simplest examples of conformal blocks are as follows. (We let $\zeta$ be the standard coordinate of $\Cbb$.)

\begin{enumerate}
\item $\fk X=(\Pbb^1;0;\zeta)$, $\Wbb$ is associated to the marked point $0$. Then each $T\in\Hom_\Vbb(\Wbb,\Vbb')$ (where $\Vbb'$ is the contragredient module of the vacuum $\Vbb$) provides a conformal block
\begin{align*}
w\in\Wbb\mapsto \bk{Tw,\id}	
\end{align*}
Here $\bk{\cdot,\cdot}$ refers to the standard pairing of $\Vbb$ and $\Vbb'$. Of particular interest is the case that an isomorphism of $\Vbb$-modules $T:\Vbb\xrightarrow{\simeq}\Vbb'$ exists and is fixed. Then there is a canonical conformal block associated to $\fk X$ and $\Vbb$.
\item $\fk X=(\Pbb^1;0,\infty;\zeta,\zeta^{-1})$, and $\Wbb,\Wbb'$ are associated to $0,\infty$. Then  we have a conformal block
\begin{align}
\uptau_\Wbb:\Wbb\otimes\Wbb'\rightarrow\Cbb,w\otimes w'\mapsto \bk{w,w'}.	\label{eq58}
\end{align}
\item $\fk X=(\Pbb^1;0,z,\infty;\zeta,\zeta-z,\zeta^{-1})$, and $\Wbb,\Vbb,\Wbb'$ are associated to $0,z,\infty$. The the vertex operation $Y$ for $\Wbb$ defines a conformal block
\begin{align}
w\otimes v\otimes w'\in\Wbb\otimes\Vbb\otimes\Wbb'\mapsto \bk{Y(v,z)w,w'}.\label{eq55}
\end{align}
\end{enumerate}

Now, we add a new point $y\in C\backslash\{x_1,\dots,x_N\}$ (together with a local coordinate $\mu$) to $\fk X$ and call this new data $\wr\fk X_y$, and associate the vacuum module $\Vbb$ to $y$. Then  each conformal block $\upphi:\Wbb_\blt\rightarrow\Cbb$ associated to $\fk X$ and $\Wbb_\blt$ canonically gives rise to one $\wr\upphi_y:\Vbb\otimes\Wbb_\blt\rightarrow\Cbb$ associated to $\wr\fk X_y$ and $\Vbb\otimes\Wbb_\blt$, called the \textbf{propagation} of $\upphi$ at $y$. The propagation is uniquely determined by the fact that
\begin{align}
\wr\upphi(\id\otimes w_\blt)_y=\upphi(w_\blt).\label{eq53}
\end{align}
For example, it follows easily from such uniqueness that the third example above is the propagation of the second one at $z$, i.e.
\begin{align*}
\wr\uptau_\Wbb(v\otimes w\otimes w)_z=	\bk{Y(v,z)w,w'}.
\end{align*} 
More generally, when $y\in C$ is close to $x_i$ and the local coordinate $\mu$ at $y$ is $\eta_i-\eta_i(y)$, 
\begin{align}
\wr\upphi(v\otimes w_\blt)_y=\upphi(w_1\otimes\cdots\otimes Y(v,\eta_j(y))w_i\otimes\cdots\otimes w_N)	\label{eq54}
\end{align}
where the right hand side converges absolutely as a formal Laurent series of $\eta_j(y)$. (Cf. \cite[Thm. 6.2]{Zhu94}, \cite[Chapter 10]{FB04}, or Thm. \ref{lb4} of this article.) The uniqueness of $\wr\upphi$ satisfying \eqref{eq53} is not hard to show; what is more difficult is to prove the existence of propagation (cf. \cite{TUY89,Zhu94,Zhu96,FB04,Cod19,DGT19a}).

\subsection*{Sewing conformal blocks}

It is worth noting that the right hand side of \eqref{eq54} is the sewing of $\upphi$ and $\wr\tau_\Wbb$ ($=$the conformal block defined in \eqref{eq55}) corresponding the geometric sewing of $C$ and $\Pbb^1$ along the points $x_i,\infty$ with respect to their local coordinates $\eta_i,\zeta^{-1}$. In general, given an $(N+2)$-pointed compact Riemann surface with local coordinates $\wtd{\fk X}=(\wtd C;x_1,\dots,x_N,x',x'';\eta_1,\dots,\eta_N,\xi,\varpi)$ where each connected component of $\wtd C$ intersects $\{x_1,\dots,x_N\}$, if $\xi$ (resp. $\varpi$) is defined on a neighborhood $W'$ of $x'$ (resp. $W''$ of $x''$) such that $\xi(W')$ is the open disc $\mc D_r$ with radius $r$ (resp. $\varpi(W'')=\mc D_\rho$), and that $W'$ (resp. $W''$) contains only one point among $x_1,\dots,x_N,x',x''$. Then for each $0<|q|<r\rho$, we remove
\begin{align*}
F'=\{y\in W':|\xi(y)|\leq |q|/\rho\},\qquad F''=\{y\in W'':|\varpi(y)|\leq |q|/r\},	
\end{align*}
from $\wtd C$, and glue the remaining part by identifying all $y'\in W'$ with $y''\in W''$ if $\xi(y')\varpi(y'')=q$. As a result, we obtain a new compact Riemann surface $\mc C_q$ with marked points $x_1,\dots,x_N$ and local coordinates $\eta_1,\dots,\eta_N$. We denote this data by $\fk X_q$. Corresponding to this geometric sewing, we associated $\Vbb$-modules $\Wbb_1,\dots,\Wbb_N,\Mbb,\Mbb'$ to the marked points $x_1,\dots,x_N,x',x''$ where $\Mbb'$ is contragredient to $\Mbb$, and assume that the modules are $\Nbb$-gradable (i.e., admissible) with grading operator $\wtd L_0$ such that each graded subspace is finite-dimensional. $q^{\wtd L_0}\in\End(\Mbb)[[q]]$ can be regarded as an element of  $\Mbb\otimes\Mbb'[[q]]$, which we denote by $q^{\wtd L_0}\btr\otimes\btl$. If $\uppsi:\Wbb_\blt\otimes\Mbb\otimes\Mbb'\rightarrow\Cbb$ is a conformal block associated to $\wtd{\fk X}$, we define a linear $\wtd{\mc S}\uppsi:\Wbb_\blt\rightarrow\Cbb[[q]]$ sending each $w\in\Wbb_\blt$ to
\begin{gather}
	\wtd{\mc S}\uppsi(w)=\uppsi(w\otimes q^{\wtd L_0}\btr\otimes\btl).\label{eq56}
\end{gather}
It was shown in \cite[Thm. 8.5.1]{DGT19b} that the above linear map defines a ``formal conformal block" (i.e., a ``conformal block" when $q$ is infinitesimal). If this series converges absolutely on $|q|<r\rho$, then it defines an actual conformal block associated to $\fk X_q$ \cite[Thm. 11.2]{Gui23}, called the \textbf{sewing} of $\uppsi$.

In the above process, if $\wtd C$ is connected, then $\mc C_q$ is the self-sewing of $\wtd C$. For instance, if we sew the  $\fk X$ in the above example 3 along $0$ and $\infty$ to get a torus, we accordingly sew the conformal block \eqref{eq55} to obtain the (normalized) character of $\Wbb$-module $v\mapsto \Tr (Y(v,z)q^{\wtd L_0})$, which plays an important role in the early development of VOA theory. If $\wtd C$ has two connected component $\wtd C_1,\wtd C_2$, and if we sew $\wtd C$ along $x'\in\wtd C_1,x''\in\wtd C_2$, we obtain a connected sum of $\wtd C_1$ and $\wtd C_2$. If we choose $\wtd C=C\sqcup\Pbb^1$ and sew $\wtd C$ along $x_i\in C$ and $\infty\in\Pbb^1$, then at $q=1$, the corresponding sewing of the conformal blocks $\upphi$ and \eqref{eq55} is just \eqref{eq54}, and the new Riemann surface we get is naturally equivalent to $C$.

\subsection*{Propagation is a sewing followed by an analytic continuation}

Now,  \eqref{eq54} indicates that propagation and sewing are related. Roughly speaking, propagation can be understood as follows: \uline{When the inserted point $y$ is close to a marked point $x_i$, the propagation is defined by sewing. When $y$ is far from the marked points, the propagation is defined by analytic continuation (provided that it exists).} (See Exp. \ref{lb9} and the proof of Thm. \ref{lb4} for details.)

The above important point is implicit in the literature (such as \cite{Zhu94,Zhu96}). However, it seems that no existing result relies completely on this idea (and especially on sewing) to establish the existence of propagation.  The first result for general VOAs over Riemann surfaces is due to Zhu \cite{Zhu94}. Zhu used analytic methods to establish the propagation of conformal blocks over a single compact Riemann surface. However, instead of using sewing in the proof, he constructed propagation using certain ``Verma modules". (See \cite[Thm. 6.1]{Zhu94})   A more algebraic approach was later given in \cite[Thm. 10.3.1]{FB04}. Propagation over \textit{algebraic} families of complex algebraic curves was given in \cite[Thm. 3.6]{Cod19} and \cite[Thm. 5.1]{DGT19a}. Instead of using sewing in their proofs, they used a PBW basis instead. (In fact, the analytic sewing is unavailable in algebraic geometry.)

Unlike previous approaches to propagation, ours is based largely on the above understanding of propagation (i.e. it is sewing+analytic continuation). Let us explain it in more details below.

\subsection*{Main result: propagation of analytic families of conformal blocks}

The first main result of this article is Thm. \ref{lb4}, which establishes the propagation of conformal blocks for a \textit{holomorphic} family of compact Riemann surfaces with marked points. Roughly speaking, Thm. \ref{lb4} says the following: Suppose that we have a (holomorphic) family $\fk X$ of compact Riemann surfaces with marked points. Let $\mc B$ be a base manifold with holomorphic parameters $\tau_\blt=(\tau_1,\dots,\tau_m)$. Suppose that a holomorphic section $\varphi=\varphi(\tau_\blt)$ of conformal blocks associated to $\fk X$ is given. Then its propagation $\wr\varphi=\wr\varphi(\tau_\blt,z)$ exists as a section which is simultaneously holomorphic with respect to $\tau_\blt$ and $z$. Here $z$ is (locally) a parameter on the fibers of Riemann surfaces.  

I have mentioned that propagation over a single Riemann surface was already proved in the literature. However, one cannot use fiberwise propagation to construct $\wr\varphi$ for a family of surfaces: it implies only that $\wr\varphi(\tau_\blt,z)$ is holomorphic over $z$ for each fixed $\tau_\blt$, but not that $\wr\varphi(\tau_\blt,z)$ is holomorphic over $\tau_\blt$ for each fixed $z$ (even if we know that $\varphi(\tau_\blt)$ is holomorphic over $\tau_\blt$).

\subsection*{Ideas of the proof of Thm. \ref{lb4}}

Compare to the results already existing in the literature, the most novel part in Thm. \ref{lb4} is that \uline{the holomorphicity of $\varphi(\tau_\blt)$ with respect to $\tau_\blt$ implies the (simultaneous) holomorphicity of $\wr\varphi(\tau_\blt,z)$ with respect to $\tau_\blt,z$}. Our main tool for proving this fact is the \textbf{strong residue theorem} for holomorphic families of compact Riemann surfaces, given in Thm. \ref{lb7}. Roughly speaking, the (classical) strong residue theorem says the following: Suppose that to each marked point of a compact Riemann surface a formal Laurent series is associated. Then these formal Laurent series are expansions of a meromorphic section with possible poles only at these marked points if and only if it satisfies the residue theorem when multiplied by any meromorphic $1$-forms with possible poles only at these points. The classical strong residue theorem is an easy application Serre's duality for holomorphic bundles on compact Riemann surfaces (cf. \cite[Sec. 1.2.3]{Ueno08}). Its generalization to holomorphic families (i.e. Thm. \ref{lb7}) is more involved: our proof combines Serre's duality with Grauert's base change theorem in an appropriate way.

Another feature of our proof of Thm. \ref{lb4} is that \textit{we uses essentially the viewpoint that propagation is a sewing followed by an analytic continuation}. In fact, in the proof of Thm. \ref{lb4}, we first establish the existence of $\wr\varphi(\tau_\blt,z)$ when $z$ is neared a marked point. We prove this part by using the fact that $\wr\varphi(\tau_\blt,z)$ is a sewing of $\varphi(\tau_\blt)$ and a $3$-pointed genus $0$ conformal block. Then we perform the analytic continuation. The details of this sewing construction are given in Exp. \ref{lb9}. 

In particular, this viewpoint allows us to prove that $\wr\varphi$ is a conformal block (but not just an arbitrary element) when $z$ is near the marked points by using the nontrivial fact that the sewing of a conformal block is again a conformal block as long as the sewing is convergent, cf. Thm. \ref{lb8}. (This theorem was originally proved in \cite[Thm. 11.3]{Gui23}.) It is precisely this part that plays the role of the construction of certain Verma modules in the proof of \cite{Zhu94}, and that of the PBW basis in the proofs of \cite{Cod19,DGT19a}. Similarly, in the proof of Thm. \ref{lb4} we also need the fact that the analytic continuation of a conformal block is a conformal block, as proved in \cite[Prop. 6.4]{Gui23} (cf. Prop. \ref{lb10}).

\subsection*{Main result: sewing and propagation are commuting}

Using Thm. \ref{lb4} and induction, it is now easy to prove Thm. \ref{lb21}, another main result of this article, which says roughly that ``sewing commutes with propagation".


Roughly speaking, Thm. \ref{lb21} says the following: Suppose that $\uppsi$ is a conformal block associated to a compact Riemann surface with marked point. Suppose that the sewing $\wtd{\mc S}\uppsi=\eqref{eq56}$ along a pair of marked points is convergent. Then we have
\begin{align}
\wr^n\wtd{\mc S}\uppsi=\wtd{\mc S}\wr^n\uppsi,	\label{eq57}
\end{align}
where both sides are well-defined holomorphic sections.

Note that Thm. \ref{lb21} shows, in particular, that the convergence of $\wtd{\mc S}\uppsi$ implies automatically the convergence/analyticity of the sewing $\wtd{\mc S}\wr^n\uppsi$. This nontrivial phenomenon first appeared in a prominent way in \cite{Zhu96}. In that paper, Zhu first used differential equations to establish the analyticity of $1$-pointed conformal blocks of genus-$1$ for $C_2$-cofinite VOAs. Then he used his recurrent formula to prove that the $n$-pointed conformal blocks of genus-$1$ are analytic when all the marked points are associated with the vacuum module $\Vbb$. His proof of ``$1$-pointed convergence/analyticity implies $n$-pointed analyticity" in genus $1$ by recurrent formula does not rely on $C_2$-cofiniteness or differential equations. However, this phenomenon in higher genus was not further investigated in \cite{Zhu94}.

In the proof of Thm. \ref{lb21}, the analyticity of $\wtd{\mc S}\wr^n\uppsi$ follows from that of $\wr^n\wtd{\mc S}\uppsi$ (since they are locally equal as formal power series). The analyticity of $\wr^n\wtd{\mc S}\uppsi$ is an easy consequence of Thm. \ref{lb4} (applied inductively to the propagation of the holomorphic family of conformal blocks $\wr^{n-1}\wtd{\mc S}\uppsi$).


\subsection*{Applications}

We give an application of Thm. \ref{lb21}. Let $\fk Y=(C;x_1,\dots,x_N;\eta_1,\dots,\eta_N)$, associate $\Wbb_j$ to $x_j$ for each $j$, and choose a conformal block $\upphi:\Wbb_\blt\rightarrow\Cbb$ associated to $\fk Y$. Choose $1\leq i\leq N$. Let $\fk P=(\Pbb^1;0,\infty;\zeta,\zeta^{-1})$, and associate $\Wbb_i,\Wbb_i'$ to $0,\infty$. Then $\uppsi:=\upphi\otimes\uptau_{\Wbb_i}:\Wbb_\blt\otimes\Wbb_i\otimes\Wbb_i'\rightarrow\Cbb$ (recall \eqref{eq58}) is a conformal block associate to the disjoint union $\wtd{\fk X}=\fk Y\sqcup\fk P$. If we sew $\wtd{\fk X}$ along $x_i\in C$ and $\infty\in\Pbb^1$ at $q$, the new pointed Riemann surface with local coordinates $\fk X_q$ is
\begin{align*}
\fk X_q=(C;x_1,\dots,x_N;\eta_1,\dots,q^{-1}\eta_i,\dots,\eta_N),	
\end{align*}
and  (setting $w_\blt=w_1\otimes\cdots\otimes w_N$ as usual)
\begin{align}
\wtd{\mc S}\uppsi(w_\blt)=\upphi(w_1\otimes\cdots\otimes q^{\wtd L_0}w_i\otimes\cdots\otimes w_N),\label{eq59}
\end{align}
which clearly converges absolutely for all $q$. Assume $\eta_i$ is defined on an open disc $W_i\ni x_i$ such that $\eta_i(W_i)=\mc D_{r_i}$ has radius $r_i$, and that $W_i$ contains only $x_i$ among $x_1,\dots,x_N$. Choose $r>0$. Then, according to our main result, the sewing of $n$-propagation
\begin{align*}
&\wtd{\mc S}\wr^n\uppsi	(v_1\otimes\cdots\otimes v_n\otimes w_\blt)_{\eta_i^{-1}(qz_1),\dots,\eta_i^{-1}(qz_n)}\\
=&\upphi(w_1\otimes\cdots\otimes w_{i-1} \otimes q^{\wtd L_0}\btr\otimes w_{i+1}\otimes\cdots\otimes w_N)\cdot\wr^n\uptau_{\Wbb_i}(v_1\otimes\cdots\otimes v_n\otimes w_i\otimes\btl)_{z_1,\dots,z_n}
\end{align*}
(assuming that the local coordinate at each $z_j\in\Pbb^1$ is $\zeta-z_j$, and the one at $\eta_i^{-1}(qz_j)\in C$ is $q^{-1}\eta_i-z_j$) converges absolutely and uniformly when $z_1,\dots,z_n$ vary on any compact set of the configuration space $\Conf^n(\mc D_r^\times)$ (where $\mc D_r^\times=\{z\in\Cbb:0<|z|<r\}$) and when $|q|<r_i/r$.

We are especially interested in the case that $q=1$, which is accessible when $r<r_i$, namely, when $0<|z_1|,\dots,|z_n|<r_i$. Then $\wr^n\wtd{\mc S}\uppsi=\wtd{\mc S}\wr^n\uppsi$ implies (notice \eqref{eq59})
\begin{align}
&\wr^n\upphi(v_1\otimes\cdots\otimes v_n\otimes w_\blt)_{\eta_i^{-1}(z_1),\dots,\eta_i^{-1}(z_n)}\nonumber\\
=&\upphi(w_1\otimes\cdots\otimes w_{i-1} \otimes \btr\otimes w_{i+1}\otimes\cdots\otimes w_N)\cdot\wr^n\uptau_{\Wbb_i}(v_1\otimes\cdots\otimes v_n\otimes w_i\otimes\btl)_{z_1,\dots,z_n}.\label{eq60}
\end{align}
In the special case that $0<|z_1|<\cdots<|z_n|<r_i$, the above relation becomes
\begin{align}
&\wr^n\upphi(v_1\otimes\cdots\otimes v_n\otimes w_\blt)_{\eta_i^{-1}(z_1),\dots,\eta_i^{-1}(z_n)}\nonumber\\
=&\upphi\big(w_1\otimes\cdots\otimes Y(v_n,z_n)\cdots Y(v_1,z_1)w_i\otimes\cdots\otimes w_N\big)\label{eq61}	
\end{align}
where the right hand side converges absolutely. Zhu proved relation \eqref{eq61} in \cite[Thm. 6.2]{Zhu94} when $v_1,\dots,v_n$ are primary, or when the local coordinates are contained in a projective structure (i.e., an atlas whose transition functions are M\"obius transforms). But, as explained below, the general case, especially when $0<|z_1|=\cdots=|z_n|<r_i$, is also important.

Take an automorphism $g$ of $\Vbb^{\otimes k}$ to be the permutation associated to the cycle $(12\cdots k)$. Starting from a $\Vbb$-module $\Wbb$, Barron-Dong-Mason constructed in \cite{BDM02} a (canonical) $g$-twisted $\Vbb^{\otimes k}$-module structure on the same vector space $\Wbb$. In particular, they explicitly described the twisted vertex operator $Y^g$ for vectors in $\Vbb^{\otimes k}$ of the form $\id\otimes\cdots\otimes v\otimes\cdots\otimes\id$. For an arbitrary vector of $\Vbb^{\otimes k}$, the $Y^g$  can then be described using normal ordering. Their proof that $Y^g$ satisfies the axioms of a $g$-twisted module is algebraic, and in particular relies on a previous algebraic result of \cite{Li96}. Recently, another algebraic proof was given by Dong-Xu-Yu in \cite{DXY21} using Zhu's algebras.

Now, our observation in this article is that since $Y^g(v_1\otimes\cdots\otimes v_k,z)$ can be described by $\wr^k\uptau_{\Wbb_i}$ at $(z_1,\dots,z_k)$ (where $z_1,\dots,z_k$ are the distinct $k$-th roots of unity of $z$), using the consequences of our main result such as relation \eqref{eq60}, we can give a geometric and complex-analytic proof that $Y^g$ satisfies the axioms of a $g$-twisted module. Namely, we check that $Y^g$ satisfies the complex-analytic version of Jacobi identity (as in \cite{Hua10}). See Sec. \ref{lb24} for details. Our proof is in the same spirit as checking the Jacobi identity for VOA modules using contour integrals. Note that the geometric meaning of Barron-Dong-Mason's construction of twisted modules was given in \cite{BDM02,BHL07}, but the verification in \cite{BDM02} that these twisted modules satisfy Jacobi identity is purely algebraic. The merit of our approach, on the other hand, is that \textit{we use geometric methods to prove results about mathematical objects with geometric origins}.

Moreover,   our complex-analytic method will be generalized in \cite{Gui21} to construct permutation twisted conformal blocks from untwisted ones, and vice versa. As an important consequence, the fusion rules for permutation twisted modules of a strongly rational VOA will be completely determined in \cite{Gui21}.

\subsection*{Outline}

This article is organized as follows. In Section 2, we fix the geometric notations used in later sections, and define the (multi) propagations for an (analytic) family of compact Riemann surfaces. In the case of a single compact Riemann surface $C$ with marked points $S=\{x_1,\dots,x_N\}$, its $n$-propagation is easy to describe: If we let several distinct points $y_1,\dots,y_n$ move on $C\setminus S$, we obtain a family of compact Riemann surfaces (all isomorphic to $C$) with $N$ fixed marked points and $n$ varying points over the base manifold $\Conf^n(C\setminus S)$.

We recall the definitions and basic properties of sheaves of VOAs (i.e. VOA bundles) and conformal blocks in Sections 3 and 4. In Section 5, we recall some important facts about the sewing of conformal blocks associated to the sewing of a family of compact Riemann surfaces. In Section 6, we relate sheaves of VOAs and the $\scr W$-sheaves which were naturally introduced to define (sheaves of) conformal blocks.

In Section 7, we give a new proof of conformal block propagation for (analytic) families of compact Riemann surfaces. In particular, we prove that propagation is compatible (in the complex analytic sense) with the deformation of pointed compact Riemann surfaces. Roughly speaking, this means that if the original conformal blocks are parametrized by $\tau\in\mc B$ where $\mc B$ is the base manifold of the family, and if the propagation on each fiber is parametrized by $z$, then the propagation is a multivariable analytic function of $(z,\tau)$. The precise statement is formulated using the language of sheaves; see Thm. \ref{lb4}. These results were proved in \cite[Thm. 3.6]{Cod19} for CFT type VOAs using a Lie-theoretic method, which relies on the fact that such VOAs have PBW bases. As explained earlier, our proof is based on the idea of sewing, and relies on the Strong Residue Theorem and the fact that the sewing of conformal blocks are conformal blocks \cite[Thm. 11.2]{Gui23}, whose  formal version was proved in \cite{DGT19b}. 

Note that here we should use the Strong Residue Theorem for analytic families of compact Riemann surfaces. This result is well-known, although we are not able to find a proof in the literature. So we include a proof in the Appendix Section A.

We discuss elementary properties of multi-propagation in Section 8.  Most of these important properties were more or less known before (cf. \cite{FB04}) but not explicitly written down. We collect these results under Thm. \ref{lb17} so that they can be directly cited or used in future works on VOA. These results follow rather directly from those in the previous sections.

The main theorem of this article, summarized by the slogan ``sewing commutes with propagation", is proved in Section 9. To give an application of this result, we construct in Section 10 permutation-twisted modules for tensor product VOAs.

\subsection*{Acknowledgment}

I would like to thank Nicola Tarasca for helpful discussions. The author was supported by BMSTC and ACZSP (Grant no. Z221100002722017).

\section{The geometric setting}\label{lb2}

We set $\Nbb=\{0,1,2,\dots\}$ and $\Zbb_+=\{1,2,3,\dots\}$. Let $\Cbb^\times=\Cbb\setminus\{0\}$. \index{C@$\Cbb^\times$} For each $r>0$, we let $\mc D_r=\{z\in\Cbb:|z|<r\}$ and $\mc D_r^\times=\mc D_r\setminus\{0\}$. \index{Dr@$\mc D_r,\mc D_r^\times$} For any topological space $X$, we define the configuration space $\Conf^n(X)=\{(x_1,\dots,x_N)\in X^n:x_i\neq x_j~\forall 1\leq i<j\leq n\}$. \index{Conf@$\Conf^n(X)$}

For each complex manifold $X$, $\scr O_X$ is the sheaf of holomorphic functions of $X$. For each $x\in X$ and any $\scr O_X$-module $\scr E$, $\scr E_x$ is the stalk of $\scr E$ at $x$. $\fk m_{X,x}$ (or simply $\fk m_x$ when no confusion arises)  is by definition $\{f\in\scr O_{X,x}:f(x)=0\}$. $\scr E|_x:=\scr E_x/\fk m_x\scr E_x\simeq\scr E\otimes_{\scr O_X}\scr O_{X,x}/\fk m_x$ is the fiber of $\scr E$ at $x$. More generally, if $Y$ is a closed complex sub-manifold of $X$ with $\scr I_Y$ being the ideal sheaf (the sheaf of all sections of $\scr O_X$ vanishing at $Y$), then the restriction $\scr E|_Y$ is defined to be $\scr E\otimes_{\scr O_X}\scr O_X/\scr I_Y$ (restricted to the set $Y$). We suppress the subscript $\scr O_X$ under $\otimes$ when taking tensor products of $\scr O_X$-modules. If $s$ is a section of $\scr E$, then $s|_Y$ is the corresponding value $s\otimes 1$ in $\scr E|_Y$.

(For the readers not familiar with the language of sheaf of modules: we only consider the case that $\scr E$ is locally free (with finite or infinite rank), i.e., a holomorphic vector bundle. Then $\scr E|_Y$ resp. $s|_Y$ is the usual restriction of the vector bundle resp. vector field to the submanifold $Y$.)

If $\scr E$ is locally free, $\scr E^\vee$ denotes its dual vector bundle.

For a Riemann surface $C$, its cotangent line bundle is denoted by $\omega_C$. \index{zz@$\omega_C$, $\omega_{\mc C_b}$}

A \textbf{family of compact Riemann surfaces} $\fk X$ is by definition a holomorphic proper  map of complex manifolds
\begin{align*}
\fk X=(\pi:\mc C\rightarrow\mc B)	
\end{align*}
that is a submersion and satisfies that each fiber $\mc C_b:=\pi^{-1}(b)$ (where $b\in\mc B$) is a (non-necessarily connected) compact Riemann surface.  



A \textbf{family of $N$-pointed compact Riemann surfaces} is by definition
\begin{align}
	\fk X=(\pi:\mc C\rightarrow\mc B;\sgm_1,\dots,\sgm_N)\label{eq2}	
\end{align}
where $\pi:\mc C\rightarrow\mc B$ is a family of compact Riemann surfaces, each section $\sgm_j:\mc B\rightarrow\mc C$ is holomorphic and satisfies $\pi\circ\sgm_j=\id_{\mc B}$, and any two $\sgm_i(\mc B),\sgm_j(\mc B)$ (where $1\leq i<j\leq N$) are disjoint. Unless otherwise stated, we also assume that every connected component of each fiber \index{CbXb@$\mc C_b,\fk X_b$}
\begin{align*}
	\mc C_b=\pi^{-1}(b)	
\end{align*}
(where $b\in\mc B$) contains at least one of $\sgm_1(b),\dots,\sgm_N(b)$. We set
\begin{align*}
	\fk X_b=(\mc C_b;\sgm_1(b),\dots,\sgm_N(b)),	
\end{align*}
which is an $N$-pointed compact Riemann surface. We define closed submanifold \index{SX@$\SX$}
\begin{equation*}
	\SX=\bigcup_{j=1}^N\sgm_j(\mc B),	
\end{equation*}
considered also as a divisor of $\mc C$. For any sheaf of $\scr O_{\mc C}$-module $\scr E$, and for any $n\in\Zbb$, we set \index{ES@$\scr E(n\SX),\scr E(\star\SX)$}
\begin{gather*}
	\scr E(n\SX):=\scr E\otimes\scr O_{\mc C}(n\SX),\\
	\scr E(\star\SX)=\varinjlim_{n\in\Nbb}\scr E(n\SX).
\end{gather*}
When $\scr E$ is a vector bundle, $\scr E(n\SX)$ is the sheaf of sections of $\scr E$ which possibly has poles at each $\sgm_j(\mc B)$ with order at most $n$.

For each $1\leq j\leq N$, a \textbf{local coordinate} of $\fk X$ at $\sgm_j$ is defined to be a holomorphic function $\eta_j\in\scr O(W_i)$ (where $W_i$ is a neighborhood of $\sgm_i(\mc B)$) which is injective on each fiber $W_i\cap\pi^{-1}(b)$ and has value $0$ on $\sgm_i(\mc B)$. It follows that $(\pi,\eta_j)$ is a biholomorphism from $W_i$ to a neighborhood of $\mc B\times\{0\}$ in $\mc B\times\Cbb$. $\eta_j|_{\mc C_b}$ is a local coordinate of the fiber $\mc C_b$ at the point $\sgm_j(b)$, which identifies a neighborhood  of $\sgm_j(b)$ (say $W_j\cap\mc C_b$) with an open subset of $\Cbb$ such that $\sgm_j(b)$ is identified with the origin. If $\fk X$ is equipped with local coordinates $\eta_1,\dots,\eta_N$ at $\sgm_1(\mc B),\dots,\sgm_N(\mc B)$ respectively, we set
\begin{align*}
\fk X_b=(\mc C_b;\sgm_1(b),\dots,\sgm_N(b);\eta_1|_{\mc C_b},\dots,\eta_N|_{\mc C_b}).	
\end{align*}
In particular, $S_{\fk X_b}=\sum_j\sgm_j(b)$ is a divisor of $\mc C_b$.

Now, we let $\fk X=\eqref{eq2}$ be $N$-pointed but not necessarily equipped with local coordinates. Define the \textbf{propagated family} $\wr\fk X$ as follows. Consider the commutative diagram 
\begin{equation*}
\begin{tikzcd}
\mc C\times_{\mc B}(\mc C\setminus\SX) \arrow[r] \arrow[d, "\wr\pi"] & \mc C \arrow[d, "\pi"] \\
\mc C\setminus\SX\arrow[r, "\pi"] & \mc B
\end{tikzcd}	
\end{equation*}
where $\mc C\times_{\mc B}(\mc C\setminus\SX)$ is the closed submanifold of $\mc C\times(\mc C\setminus\SX)$ consisting of all $(x,y)$ satisfying $\pi(x)=\pi(y)$, the first horizontal arrow is the projection onto the first component, and $\wr\pi$ is the projection onto the second component. \index{X@$\wr\fk X,\wr\mc C,\wr\mc B,\wr\pi$} We set
\begin{align*}
	\wr\mc B=\mc C\setminus\SX,\qquad \wr\mc C=\mc C\times_{\mc B}(\mc C\setminus\SX).	
\end{align*}
The holomorphic section $\sigma:\mc C\setminus\SX\rightarrow\mc C\times_{\mc B}(\mc C\setminus\SX)$ is set to be the diagonal map, i.e.,
\begin{align*}
\sigma:x\mapsto (x,x).	
\end{align*}
Define sections \index{zz@$\wr\sgm_j,\wr^n\sgm_j$}
\begin{align*}
\wr\sgm_j:\mc C\setminus\SX\rightarrow\mc C\times_{\mc B}(\mc C\setminus\SX),\qquad x\mapsto (\sgm_j\circ\pi(x),x).
\end{align*}
Then we obtain an $(N+1)$-pointed family $\wr\fk X$ of compact Riemann surfaces to be
\begin{align}
\wr\fk X=(\wr\pi:\wr\mc C\rightarrow\wr\mc B;\sigma,\wr\sgm_1,\dots,\wr\sgm_N).	\label{eq7}
\end{align}
Intuitively, $\wr\fk X$ is the result of adding one extra marked point to each fiber $\mc C_b$ disjoint from $\SXb$, letting this marked point vary on $\mc C_b\setminus\SXb$ over all $b\in\mc B$, and fixing the other marked points.

One can define multi-propagation inductively by $\wr^n\fk X=\wr\wr^{n-1}\fk X$, which corresponds to varying $n$ extra distinct points of $\mc C_b\backslash S_{\fk X_b}$. Write
\begin{align*}
\wr^n\fk X=(\wr^n\pi:\wr^n\mc C\rightarrow\wr^n\mc B;\sigma_1,\dots,\sigma_n,\wr^n\sgm_1,\dots,\wr^n\sgm_N).	
\end{align*}
Then $\wr^n\fk X$ can be described in a more explicit way. Let
\begin{align*}
\prod^n_{\mc B} \mc C\setminus\SX=\underbrace{(\mc C\setminus\SX)\times_{\mc B}\cdots\times_{\mc B}(\mc C\setminus\SX)}_{n}	
\end{align*}
which is the set of all $(x_1,\dots,x_n)\in\prod^n \mc C\setminus\SX$  satisfying $\pi(x_1)=\dots=\pi(x_n)$. Define the relative configuration space \index{ConfB@$\Conf^n_{\mc B}$}
\begin{align*}
\Conf^n_{\mc B}(\mc C\setminus\SX)=\Big\{(x_1,\dots,x_N)\in\prod\nolimits_{\mc B}^n \mc C\setminus\SX: x_i\neq x_j\text{ for any }1\leq i<j\leq n\Big\}
\end{align*}
which clearly admits a submersion $\Conf^n_{\mc B}(\mc C\setminus\SX)\rightarrow\mc B$ (sending each $(x_1,\dots,x_n)$ to $\pi(x_1)$). Take
\begin{align*}
\wr^n\pi:\mc C\times_{\mc B}	\Conf^n_{\mc B}(\mc C\setminus\SX)\rightarrow \Conf^n_{\mc B}(\mc C\setminus\SX).
\end{align*}
to be the pullback of $\pi:\mc C\rightarrow\mc B$ along $\Conf^n_{\mc B}(\mc C\setminus\SX)\rightarrow\mc B$. So we have a commutative diagram
\begin{equation*}
\begin{tikzcd}
\mc C\times_{\mc B}\Conf^n_{\mc B}(\mc C\setminus\SX) \arrow[r] \arrow[d, "\wr^n\pi"] & \mc C \arrow[d, "\pi"] \\
\Conf^n_{\mc B}(\mc C\setminus\SX)\arrow[r] & \mc B
\end{tikzcd}	
\end{equation*}
Then $\wr^n\fk X$ is equivalent to \index{Xn@$\wr^n\fk X,\wr^n\mc C,\wr^n\mc B,\wr^n\pi$}
\begin{align*}
\wr^n\fk X\simeq\Big(\wr^n\pi:\mc C\times_{\mc B}\Conf^n_{\mc B}(\mc C\setminus\SX)\rightarrow\Conf^n_{\mc B}(\mc C\setminus\SX);\sigma_1,\dots,\sigma_n,\wr^n\sgm_1,\dots,\wr^n\sgm_N\Big)	
\end{align*}
where \index{zz@$\wr\sgm_j,\wr^n\sgm_j$}
\begin{gather*}
\sigma_i(x_1,\dots,x_n)=(x_i,x_1,\dots,x_n),\\
\wr^n\sgm_j(x_1,\dots,x_n)=(\sgm_j\circ\pi(x_1),x_1,\dots,x_n)	
\end{gather*}
for each $1\leq i\leq n$, $1\leq j\leq N$, $(x_1,\dots,x_n)\in\Conf^n_{\mc B}(\mc C\setminus\SX)$.

\section{Sheaves of VOA}\label{lb1}

For any ($\Cbb$-)vector space $W$, we define four spaces of formal series \index{z@$[[z]],[[z^{\pm 1}]],((z)),\{z\}$}
\begin{gather*}
W[[z]]=\bigg\{\sum_{n\in\mathbb N}w_nz^n:\text{each }w_n\in W\bigg\},\\
W[[z^{\pm 1}]]=\bigg\{\sum_{n\in\mathbb Z}w_nz^n:\text{each }w_n\in W\bigg\},\\
W((z))=\Big\{f(z):z^kf(z)\in W[[z]]\text{ for some }k\in\mbb Z \Big\},\\
W\{z\}=\bigg\{\sum_{n\in\mathbb \Cbb}w_nz^n:\text{each }w_n\in W\bigg\}.
\end{gather*}

Throughout this article, $\Vbb$ is an $\Nbb$-graded vertex operator algebra (VOA) with vacuum $\id$ and conformal vector $\cbf$. We write $Y(v,z)=\sum_{n\in\Zbb}Y(v)_nz^{-n-1}$. Then $\{L_n=Y(\cbf)_{n+1}\}$ are Virasoro algebras, and $L_0$ gives grading $\Vbb=\bigoplus_{n\in\Nbb}\Vbb(n)$, where each $\Vbb(n)$ is finite-dimensional. 

In this article,  a $\Vbb$-module $\Wbb$ means a \textbf{finitely-admissible $\Vbb$-module}. This means that $\Wbb$ is a weak $\Vbb$-module in the sense of \cite{DLM97} with vertex operators $Y_\Wbb(v,z)=\sum_{n\in\Zbb}Y_\Wbb(v)_nz^{-n-1}$, that $\Wbb$ is equipped with a diagonalizable operator $\wtd L_0$ (not to be confused with $L_0=Y_\Wbb(\cbf)_1$ which is not necessarily diagonalizable!) satisfying \index{L0@$\wtd L_0$}  
\begin{align}\label{eq34}
[\wtd L_0,Y_\Wbb(v)_n]=Y_\Wbb(L_0 v)_n-(n+1)Y_\Wbb(v)_n,	
\end{align}
that the eigenvalues of $\wtd L_0$ are in $\Nbb$, and that each eigenspace $\Wbb(n)$ is finite-dimensional. Let \index{W@$\Wbb(n),\Wbb_{(n)}$}
\begin{align*}
	\Wbb=\bigoplus_{n\in\Nbb}\Wbb(n)	
\end{align*}
be the grading given by $\wtd L_0$. Each
\begin{align*}
	\Wbb^{\leq n}=\bigoplus_{0\leq k\leq n}	\Wbb{(k)}
\end{align*}
is finite-dimensional. We choose  the $\wtd L_0$ operator on $\Vbb$ to be $L_0$.

We can define the \textbf{contragredient $\Vbb$-module} $\Wbb'$ of $\Wbb$ as in \cite{FHL93}. We choose $\wtd L_0$-grading to be
\begin{align*}
\Wbb'=	\bigoplus_{n\in\Nbb}\Wbb'{(n)},\qquad \Wbb'{(n)}=\Wbb{(n)}^*.
\end{align*}
Therefore, if we let $\bk{\cdot,\cdot}$ be the pairing between $\Wbb$ and $\Wbb'$, then $\bk{\wtd L_0 w,w'}=\bk{w,\wtd L_0 w'}$ for each $w\in\Wbb,w'\in\Wbb'$.

The vertex operator  $Y_\Wbb$ for $\Wbb$ (abbreviated as $Y$ in the following) gives a linear map $Y:\Vbb\otimes\Wbb\rightarrow \Wbb((z))$ sending $v\otimes w$ to $Y(v,z)w$. We will write $Y_\Wbb$ as $Y$ when the context is clear. By identifying $\Vbb$ with $\Vbb\otimes 1$ in $\Vbb\otimes\Cbb((z))$ and similarly $\Wbb$ with $\Wbb\otimes 1$ in $\Wbb\otimes\Cbb((z))$, $Y$ can be extended $\Cbb((z))$-bilinearly to \index{Y@$Y_\Wbb=Y$}
\begin{gather}
\begin{array}{c}
Y:\Big(\Vbb\otimes\Cbb((z))\Big)\otimes\Big(\Wbb\otimes \Cbb((z))\Big)\rightarrow \Wbb\otimes\Cbb((z)),\\[1.5ex]
Y(u\otimes f,z)w\otimes g=f(z)g(z)Y(u,z)w
\end{array}\label{eq3}
\end{gather}
(for each $u\in\Vbb,w\in\Wbb,f,g\in\Cbb((z))$). It can furthermore be extended to 
\begin{align}
Y:	\Big(\Vbb\otimes\Cbb((z))dz\Big)\otimes \Big(\Wbb\otimes\Cbb((z))\Big)\rightarrow \Wbb\otimes \Cbb((z))dz \label{eq4}
\end{align}
in an obvious way. Thus, for each $v\in\Vbb\otimes\Cbb((z))dz$, we can define the residue \index{Res@$\Res$}
\begin{align}
\Res_{z=0}~Y(v,z)w,\label{eq5}
\end{align}
which, in case $v=u\otimes fdz,w=m\otimes g$ where $u\in\Vbb$, $m\in\Wbb$,  and $f,g\in\Cbb((z))$, is the $\Wbb$-coefficient of  $f(z)g(z)Y(v,z)mdz$ before $z^{-1}dz$.

We define a group $\Gbb=\{f\in\scr O_{\Cbb,0}:f(0)=0,f'(0)\neq 0\}$ \index{G@$\Gbb$} where the stalk $\scr O_{\Cbb,0}$ is the set of holomorphic functions defined on a neighborhood of $0$. The multiplication  rule of $\Gbb$ is the composition $\rho_1\circ\rho_2$ of any two elements $\rho_1,\rho_2\in\Gbb$. By \cite{Hua97}, for each $\Vbb$-module $\Wbb$, there is a homomorphism $\mc U:\Gbb\rightarrow\Wbb$ defined in the following way: If we choose the unique $c_0,c_1,c_2\dots\in\Cbb$ satisfying
\begin{align*}
\rho(z)=c_0\cdot\exp\Big(\sum_{n>0}c_nz^{n+1}\partial_z\Big)z
\end{align*}
then we necessarily have $c_0=\rho'(0)$, and we set \index{U@$\mc U(\rho)$}
\begin{align*}
\mc U(\rho)=\rho'(0)^{\wtd L_0}\cdot\exp\Big(\sum_{n>0}c_nL_n\Big).
\end{align*}
Note that although the expression of $\mc U(\rho)$ involves infinite series, its restriction to each $\Wbb^{\leq k}$ is a finite sum, because each $\sum_{n>0} c_nL_n$ lowers the $\wtd L_0$-weights by at least $1$ and is therefore nilpotent and equals $\sum_{n=1}^k c_nL_n$ on $\Wbb^{\leq k}$.

If $X$ is a complex manifold,  a (holomorphic) \textbf{family of transformations} $\rho:X\rightarrow\Gbb$ is by definition an analytic function $\rho=\rho(x,z)=\rho_x(z)$ on a neighborhood of  $X\times\{0\}\subset X\times\Cbb$. Then $\mc U(\rho)$ (on each $\Wbb$) is defined pointwisely, which is an $\End(\Wbb)$-valued function on $X$ whose value at each $x\in X$ is $\mc U(\rho_x)$. $\mc U(\rho)$ can be regarded as an $\scr O_X$-module automorphism of $\Wbb\otimes_\Cbb\scr O_X$.

Let $\fk X=(\pi:\mc C\rightarrow\mc B)$ be a family of compact Riemann surfaces. Associated to $\fk X$ one can define a sheaf of $\scr O_X$-modules $\scr V_{\fk X}$ as follows. (Cf. \cite[Chapter 6, 17]{FB04}; our presentation follows \cite[Sec. 5]{Gui23}.) First, suppose $U,V\subset\mc C$ are open subsets, and we have two holomorphic functions $\eta\in\scr O(U),\mu\in\scr O(V)$ locally injective (i.e., \'etale) on  each fiber $U_b:=U\cap\pi^{-1}(b),V_b=V\cap\pi^{-1}(b)$ ($b\in\mc B$) of $U$ and $V$ respectively. We can define a family of transformations $\varrho(\eta|\mu):U\cap V\rightarrow\Gbb$  as follows: for each $p\in\mc C$, both $\eta-\eta(p)$ and $\mu-\mu(p)$ restricts to an injective holomorphic function on the fiber $(U\cap V)_{\pi(p)}=U\cap V\cap\pi^{-1}(\pi(p))$ vanishing at $p$. Then $\varrho(\eta|\mu)_p\in\Gbb$ is determined by \index{zz@$\varrho(\eta\vert\mu)$}
\begin{align}
\boxed{~\eta-\eta(p)\big|_{(U\cap V)_{\pi(p)}}	=\varrho(\eta|\mu)_p\big(\mu-\mu(p)\big|_{(U\cap V)_{\pi(p)}}\big)~}\label{eq10}
\end{align}
on a neighborhood of $0\in\Cbb$. Then $\mc U(\varrho(\eta|\mu))$ is an $\scr O_{U\cap V}$-module automorphism of $\Vbb\otimes_\Cbb\scr O_{U\cap V}$ which restricts to an automorphism of $\Vbb^{\leq n}\otimes_\Cbb\scr O_{U\cap V}$ for each $n\in\Nbb$. The cocycle condition $\varrho(\eta|\mu)\varrho(\mu|\nu)=\varrho(\eta|\nu)$ holds for any holomorphic function $\nu$ on a neighborhood of $\mc C$ which is injective on each fiber. 

Thus, we can define $\scr V_{\fk X}^{\leq n}$  to be the holomorphic vector bundle on $\mc C$ which associates to each open $U\subset\mc C$ and each $\eta\in\scr O(U)$ locally injective on fibers a trivialization (i.e., an isomorphism of $\scr O_U$-modules) \index{U@$\mc U_\varrho(\eta)$}
\begin{align}
\mc U_\varrho(\eta):\scr V_{\fk X}^{\leq n}|_U\xrightarrow{\simeq}\Vbb^{\leq n}\otimes_\Cbb\scr O_U	\label{eq1}
\end{align}
such that for another similar $V\subset\mc C,\mu\in\scr O(V)$, we have the transition function
\begin{align}
\mc U_\varrho(\eta)\mc U_\varrho(\mu)^{-1}=\mc U(\varrho(\eta|\mu)):\Vbb^{\leq n}\otimes_\Cbb\scr O_{U\cap V}\xrightarrow{\simeq}\Vbb^{\leq n}\otimes_\Cbb\scr O_{U\cap V}. \label{eq12}
\end{align} 
If $n'>n$, we have clearly an  $\scr O_{\mc C}$-module monomorphism $\scr V_{\fk X}^{\leq n}\rightarrow\scr V_{\fk X}^{\leq n'}$ which, for each open $U\subset\mc C$ and $\eta$ as above, is transported under the isomorphisms \eqref{eq1} to the canonical monomorphism $\Vbb^{\leq n}\otimes_\Cbb\scr O_U\rightarrow\Vbb^{\leq n'}\otimes_\Cbb\scr O_U$ defined by the inclusion $\Vbb^{\leq n}\hookrightarrow\Vbb^{\leq n'}$. Thus we are allowed to define
\begin{align*}
\scr V_{\fk X}=\varinjlim_{n\in\Nbb}\scr V_{\fk X}^{\leq n}.
\end{align*}
Alternatively, one can directly define $\scr V_{\fk X}$ to be the $\scr O_{\mc C}$-module which is locally free (of infinite rank) and isomorphic to $\Vbb\otimes_\Cbb\scr O_U$ via a morphism $\mc U_\varrho(\eta)$, and whose transition function is given by $\mc U(\varrho(\eta|\mu))$. We call $\scr V_{\fk X}$ the \textbf{sheaf of VOA} associated to $\fk X$ and $\Vbb$. If $\fk X$ is a single compact Riemann surface $C$, we write $\scr V_{\fk X}$ as $\scr V_C$. \index{V@$\scr V_{\fk X}^{\leq n},\scr V_{\fk X},\scr V_C$}

For each fiber $\mc C_b$ (where $b\in\mc B$), we have a canonical equivalence
\begin{align}
\scr V_{\fk X}|_{\mc C_b}\simeq\scr V_{\mc C_b}\equiv\scr V_{\fk X_b}\label{eq15}
\end{align}
such that if these two $\scr O_{\mc C_b}$-modules are identified by this isomorphism, then  the restriction of the trivialization \eqref{eq1} to $U_b=U\cap\pi^{-1}(b)$ equals
\begin{align*}
\mc U_\varrho(\eta|_{\mc C_b}):\scr V_{\mc C_b}|_{U_b}\xrightarrow{\simeq}\Vbb\otimes_\Cbb\scr O_{U_b}.
\end{align*}

\begin{df}\label{lb11}
Since the vacuum vector $\id$ is killed by all $L_n$ (where $n\geq 0$), it is fixed by any change of coordinate $\mc U(\rho)$. It follows that we can define a section $\id\in\scr V_{\fk X}(\mc C)$ which under any trivialization $\mc U_\varrho(\eta)$ is the constant section $\id$, called the \textbf{vacuum section}. \index{1@The vacuum section $\id$}
\end{df}

\section{Conformal blocks}

Let $\fk X$ be a family of $N$-pointed compact Riemann surfaces as in \eqref{eq2}. We choose $\Vbb$-modules $\Wbb_1,\dots,\Wbb_N$. Set \index{W@$\Wbb_\blt,w_\blt=w_1\otimes\cdots\otimes w_N$}
\begin{align*}
\Wbb_\blt=\Wbb_1\otimes\cdots\otimes\Wbb_N.	
\end{align*}
$w\in\Wbb_\blt$ means a vector in $\Wbb_\blt$, and $w_\blt\in\Wbb_\blt$ means a vector of the form $w_1\otimes\cdots\otimes w_N$ where each $w_i\in \Wbb_i$.

The sheaf of conformal blocks is an $\scr O_{\mc B}$-submodule of an infinite-rank locally free $\scr O_{\mc B}$-module $\scr W_{\fk X}(\Wbb_\blt)$, where the latter is defined as follows. For each open subset $V\subset\mc B$ such that the restricted family \index{XV@$\fk X_V,\mc C_V$}
\begin{align*}
\fk X_V:=(\pi:\mc C_V\rightarrow V;\sgm_1|_V,\dots,\sgm_N|_V)	
\end{align*}
(where $\mc C_V=\pi^{-1}(V)$) admits local coordinates $\eta_1,\dots,\eta_N$ at $\sgm_1(V),\dots,\sgm_N(V)$ respectively,  we have a trivialization (i.e., an isomorphism of $\scr O_V$-modules) \index{U@$\mc U(\eta_\blt)$} \index{WXW@$\scr W_{\fk X}(\Wbb_\blt)$}
\begin{align*}
\mc U(\eta_\blt)\equiv\mc U(\eta_1)\otimes\cdots\otimes\mc U(\eta_N):\scr W_{\fk X}(\Wbb_\blt)|_V\xrightarrow{\simeq} \Wbb_\blt\otimes_\Cbb\scr O_V.	
\end{align*}
If $V$ is small enough such that we have another set of local coordinates $\mu_1,\dots,\mu_N$ at $\sgm_1(V),\dots,\sgm_N(V)$ respectively, for each $1\leq j\leq N$ we choose a family of transformations $(\eta_j|\mu_j):V\rightarrow\Gbb$ defined by \index{zz@$(\eta_j\vert\mu_j)$}
\begin{align}
\boxed{~(\eta_j|\mu_j)_b\circ\mu_j|_{\mc C_b}=\eta_j|_{\mc C_b}~}	\label{eq8}
\end{align}
for each $b\in V$. Then each $\mc U(\eta_j|\mu_j)$ is a holomorphic family of invertible endomorphisms of $\Wbb_j$ associated to $(\eta_j|\mu_j)$ (as defined in Sec. \ref{lb1}). The tensor product of them, as a family of invertible transformations of $\Wbb_\blt$ (more precisely, an automorphism of the $\scr O_V$-module $\Wbb_\blt\otimes_\Cbb\scr O_V$), is the transition function: 
\begin{align}
\mc U(\eta_\blt)\mc U(\mu_\blt)^{-1}:=\mc U(\eta_1|\mu_1)\otimes\cdots\otimes\mc U(\eta_N|\mu_N).\label{eq11}
\end{align}
This gives the definition of $\scr W_{\fk X}(\Wbb_\blt)$.

In particular, $\scr W_{\fk X_b}(\Wbb_\blt)$ is a vector space equivalent to $\Wbb_\blt$ through $\mc U(\eta_\blt|_{\mc C_b})$. It is easy to see that for each $b\in\mc B$, the restriction $\scr W_{\fk X}(\Wbb_\blt)|_b$ (i.e., the fiber of the vector bundle at $b$) is naturally equivalent to $\scr W_{\fk X_b}(\Wbb_\blt)$:
\begin{align}
\scr W_{\fk X}(\Wbb_\blt)|_b\simeq	 \scr W_{\fk X_b}(\Wbb_\blt).\label{eq6}
\end{align}
This equivalence is uniquely determined by the fact that if we identify the two spaces, then  the restriction of $\mc U(\eta_\blt)$ to the map $\scr W_{\fk X_b}(\Wbb_\blt)\rightarrow\Wbb_\blt$ equals $\mc U(\eta_\blt|_{\mc C_b})$. 


To define conformal blocks, we first consider the case that $\mc B$ is a single point. Then $C:=\mc C$ is a compact Riemann surface. We can define a linear action of $H^0(C,\scr V_C\otimes\omega_C(\star\SX))$ on $\scr W_{\fk X}(\Wbb_\blt)$ as follows. Choose any local coordinate $\eta_i$ of $C$ at the point $x_j:=\sgm_j(\mc B)$, defined on a neighboorhood $W_j$ of $x_j$ (so, in particular, $\eta_j(x_j)=0$). Note $\SX=\{x_1,\dots,x_N\}$. We assume
\begin{align*}
W_j\cap\SX=\{x_j\}.	
\end{align*}
Note that we have a trivialization
\begin{align*}
\mc U_\varrho(\eta_j):\scr V_C|_{W_i}\xrightarrow{\simeq} \Vbb\otimes_\Cbb\scr O_{W_i}\simeq	\Vbb\otimes_\Cbb\scr O_{\eta_i(W_i)}
\end{align*} 
which, tensored by $(\eta_j^{-1})^*:\omega_{W_j}\xrightarrow{\simeq}\omega_{\eta_j(W_j)}$, gives a trivialization \index{V@$\mc V_\varrho(\eta_j)$}
\begin{align*}
\mc V_\varrho(\eta_j):\scr V_C|_{W_i}\otimes\omega_{W_i}(\star\SX)\xrightarrow{\simeq} \Vbb\otimes_\Cbb\omega_{\eta_j(W_i)}(\star 0)	
\end{align*} 
Then for each $v\in H^0(C,\scr V_C\otimes\omega_C(\star\SX))$, we have a section $\mc V_\varrho(\eta_j)v$, which is a $\Vbb$-valued (more precisely, $\Vbb^{\leq n}$-valued for some $n\in\Nbb$) holomorphic $1$-form on $\eta_j(W_j)$ but possibly has poles at $\eta_j(x_j)=0$. By taking Laurent series expansions, $\mc V_\varrho(\eta_j)v$ can be regarded as an element of $\Vbb\otimes\Cbb((z))dz$. We then define, (notice that we have an isomorphism $\mc U(\eta_\blt):\scr W_{\fk X}(\Wbb_\blt)\xrightarrow{\simeq}\Wbb_\blt$) an action of $v$ on $\scr W_{\fk X}(\Wbb_\blt)$ by
\begin{subequations}
\begin{gather}
\mc U(\eta_\blt)\cdot v\cdot \mc U(\eta_\blt)^{-1}w_\blt=\sum_{j=1}^N w_1\otimes\cdots\otimes \mc U(\eta_\blt)\cdot v\cdot \mc U(\eta_\blt)^{-1}w_j\otimes\cdots\otimes w_N\\
\mc U(\eta_\blt)\cdot v\cdot \mc U(\eta_\blt)^{-1}w_j=\Res_{z=0}~Y\big(\mc V_\varrho(\eta_j)v,z\big)w_j\label{eq63}
\end{gather}
\end{subequations}
for each $w_\blt\in\Wbb_\blt$, where the residue is defined as in \eqref{eq5}. That this definition is independent of the choice of local coordinates $\eta_\blt$ follows from \cite[Thm. 6.5.4]{FB04} (see also \cite[Thm. 3.2]{Gui23}), which relies on a crucial change of variable formula (cf. \cite[Thm. 3.3]{Gui23}) proved by Huang \cite{Hua97}.

Now that we have a linear action of $H^0(C,\scr V_C\otimes\omega_C(\star\SX))$ on $\scr W_{\fk X}(\Wbb_\blt)$, we say that a linear functional $\upphi:\scr W_{\fk X}(\Wbb_\blt)\rightarrow\Cbb$ is a \textbf{conformal block} (associated to $\fk X$ and $\Wbb_\blt$) exactly when $\upphi$ vanishes on the vector space
\begin{align*}
\scr J:=H^0(C,\scr V_C\otimes\omega_C(\star\SX))\cdot \scr W_{\fk X}(\Wbb_\blt)
\end{align*}
where $\Span_\Cbb$ is suppressed on the right hand side of the equality. 

Now we come back to the general setting that $\fk X$ is a family of $N$-pointed compact Riemann surfaces. Let $\upphi:\scr W_{\fk X}(\Wbb_\blt)\rightarrow\scr O_{\mc B}$ be an $\scr O_{\mc B}$-module morphism, which can be understood in the following way: If locally we identify $\scr W_{\fk X}(\Wbb_\blt)|_V$ (where $V$ is an open subset of $\mc B$) with $\Wbb_\blt\otimes_\Cbb\scr O_V$, then $\upphi$ associates to each vector $w\in\Wbb_\blt$ (considered as the constant section $w\otimes 1\in\Wbb_\blt\otimes\scr O(V)$) a holomorphic function $\upphi(w)$ on $U$.

\begin{df}
Let $\upphi:\scr W_{\fk X}(\Wbb_\blt)\rightarrow\scr O_{\mc B}$ be an $\scr O_{\mc B}$-module morphism. For each $b\in\mc B$,  regard $\upphi|_b$ as the restriction of $\upphi$ to the fiber map $\scr W_{\fk X}(\Wbb_\blt)|_{b}\simeq\scr W_{\fk X_b}(\Wbb_\blt)\rightarrow\Cbb$. Then, we say $\upphi$  is  a \textbf{conformal block} (over $\mc B$ associated to $\fk X$ and $\Wbb_\blt$) if for each $b\in\mc B$, $\upphi|_b$ is a  conformal block associated to $\fk X_b$ (i.e., $\upphi(b)$ vanishes on $H^0(\mc C_b,\scr V_{\mc C_b}\otimes\omega_{\mc C_b}(\star\SX|_b))\cdot\scr W_{\fk X_b}(\Wbb_\blt)$).
\end{df}

The following proposition is \cite[Prop. 6.4]{Gui23}.

\begin{pp}\label{lb10}
Let $\upphi:\scr W_{\fk X}(\Wbb_\blt)\rightarrow\scr O_{\mc B}$ be an $\scr O_{\mc B}$-module morphism. Suppose that  each connected component of $\mc B$ contains a non-empty open subset $V$ such that the restriction of $\upphi$ to $\scr W_{\fk X_V}(\Wbb_\blt)\rightarrow\scr O_V$ is a conformal block, then the original $\upphi$ is a conformal block associated to $\fk X$ and $\Wbb_\blt$.
\end{pp}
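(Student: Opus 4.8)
The plan is to realize the conformal-block property, over a sufficiently small connected open subset of $\mc B$, as the vanishing of a family of holomorphic functions of the base parameter, and then to propagate that vanishing by the identity theorem. Since the conformal-block condition is fibrewise, we may assume $\mc B$ is connected, hence path-connected. The main tool is the family version of the action used in the definition of conformal blocks: over any open $V\subseteq\mc B$ carrying local coordinates $\eta_1,\dots,\eta_N$, the space of relative $\Vbb$-valued meromorphic one-forms $H^0(\mc C_V,\scr V_{\fk X_V}\otimes\omega_{\mc C_V/V}(\star\SX))$ acts $\scr O_V$-linearly on $\scr W_{\fk X_V}(\Wbb_\blt)$ by the same local recipe (the sum over $j$ of $\Res_{z=0}Y(\mc V_\varrho(\eta_j)\nu,z)(-)$ on the $j$-th tensor factor) as for a single surface. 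This is standard (cf. \cite[Ch.~17]{FB04}, \cite{Gui20}); the only points to check are that the recipe has holomorphically varying coefficients, so that it carries sections of $\scr W_{\fk X_V}(\Wbb_\blt)$ to sections, and that under the canonical identifications $\scr V_{\fk X}|_{\mc C_b}\simeq\scr V_{\mc C_b}$ (see \eqref{eq15}), $\omega_{\mc C_V/V}|_{\mc C_b}\simeq\omega_{\mc C_b}$, and $\scr W_{\fk X}(\Wbb_\blt)|_b\simeq\scr W_{\fk X_b}(\Wbb_\blt)$ (see \eqref{eq6}), its restriction to each fibre $\mc C_b$ is exactly the action appearing in the definition of a conformal block. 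Consequently, if $\wtd\nu$ is a section of $\scr V_{\fk X}\otimes\omega_{\mc C/\mc B}(\star\SX)$ over $\mc C_U$ and $\wtd w$ a section of $\scr W_{\fk X}(\Wbb_\blt)$ over $U$, then
\[
b\in U\ \longmapsto\ \upphi|_b\big(\wtd\nu|_b\cdot\wtd w|_b\big)
\]
is holomorphic on $U$, and it vanishes at every $b$ for which $\upphi|_b$ is a conformal block for $\fk X_b$ (since then $\wtd\nu|_b$ is a genuine global section on $\mc C_b$).

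Next I would show that fibrewise sections are locally liftable. Because $\scr V_{\mc C_b}\otimes\omega_{\mc C_b}(\star\SX|_b)=\varinjlim_{n,m}\scr V_{\mc C_b}^{\leq n}\otimes\omega_{\mc C_b}(m\SX|_b)$, it is enough to lift a section of the finite-rank bundle $\scr V_{\mc C_b}^{\leq n}\otimes\omega_{\mc C_b}(m\SX|_b)$. Serre duality gives $H^1\big(\mc C_b,\scr V_{\mc C_b}^{\leq n}\otimes\omega_{\mc C_b}(m\SX|_b)\big)\simeq H^0\big(\mc C_b,(\scr V_{\mc C_b}^{\leq n})^\vee(-m\SX|_b)\big)^*$; on each connected component of $\mc C_b$ the bundle $(\scr V_{\mc C_b}^{\leq n})^\vee(-m\SX|_b)$ is a fixed bundle twisted by a line bundle of degree $-m$ times the number of marked points on that component, which is $\leq -m$ because every component meets $\SX|_b$, so for $m$ large it has no nonzero global section; hence $H^1(\mc C_b,\scr V_{\mc C_b}^{\leq n}\otimes\omega_{\mc C_b}(m\SX|_b))=0$ for $m$ large. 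By upper semicontinuity of $h^1$ this holds, for a suitable fixed $m$, uniformly for $b$ in a neighborhood of any given point, so Grauert's base-change theorem shows that on such a neighborhood $\pi_*(\scr V_{\fk X}^{\leq n}\otimes\omega_{\mc C/\mc B}(m\SX))$ is locally free with formation commuting with base change. Shrinking to a connected Stein open $U$ on which this coherent sheaf and $\scr W_{\fk X}(\Wbb_\blt)$ are trivial, every $\nu\in H^0(\mc C_b,\scr V_{\mc C_b}^{\leq n}\otimes\omega_{\mc C_b}(m\SX|_b))$ lifts to a section of $\scr V_{\fk X}^{\leq n}\otimes\omega_{\mc C/\mc B}(m\SX)$ over $\mc C_U$, and every $w\in\scr W_{\fk X_b}(\Wbb_\blt)$ lifts to a section of $\scr W_{\fk X}(\Wbb_\blt)$ over $U$.

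Now I combine the two. Fix a connected Stein open $U$ of the above kind, and suppose $\upphi|_b$ is a conformal block for $\fk X_b$ for all $b$ in some non-empty open $V'\subseteq U$. For any lifts $\wtd\nu,\wtd w$ as above, the holomorphic function $b\mapsto\upphi|_b(\wtd\nu|_b\cdot\wtd w|_b)$ on $U$ vanishes on $V'$, hence vanishes identically on $U$ by the identity theorem. Letting $\wtd\nu,\wtd w$ and $n,m$ vary and using liftability, we get that $\upphi|_b$ annihilates $H^0(\mc C_b,\scr V_{\mc C_b}\otimes\omega_{\mc C_b}(\star\SX|_b))\cdot\scr W_{\fk X_b}(\Wbb_\blt)$ for every $b\in U$, i.e. $\upphi|_b$ is a conformal block for every $b\in U$. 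Finally, given an arbitrary $b_0\in\mc B$ and a fixed $b_*\in V$, cover a path in $\mc B$ from $b_*$ to $b_0$ by finitely many connected Stein opens $U_1,\dots,U_k$ of the above kind with $b_*\in U_1$, $b_0\in U_k$, and $U_i\cap U_{i+1}\neq\varnothing$: since $\upphi$ is a conformal block on the non-empty open $U_1\cap V$, the previous step makes it a conformal block on all of $U_1$, hence on all of $U_2$ (using $U_1\cap U_2$), and inductively on all of $U_k\ni b_0$. Thus $\upphi|_{b_0}$ is a conformal block, and since $b_0$ was arbitrary, $\upphi$ is a conformal block associated to $\fk X$ and $\Wbb_\blt$.

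I expect the main obstacle to be the bookkeeping for the family action: one must verify that the residue recipe defining the action of $\scr V_{\fk X}\otimes\omega_{\mc C/\mc B}(\star\SX)$ on $\scr W_{\fk X}(\Wbb_\blt)$ really lands in sections (holomorphic dependence on the base) and that it restricts on each fibre to the action used in the definition of conformal blocks — this is precisely what makes $b\mapsto\upphi|_b(\wtd\nu|_b\cdot\wtd w|_b)$ holomorphic and links it to the fibrewise vanishing — together with the cohomological input ($H^1$ vanishing on fibres plus Grauert base change) that makes relative sections exhaust all fibrewise ones. Once these are granted, everything else is a soft combination of the identity theorem with the connectedness (hence path-connectedness) of the base.
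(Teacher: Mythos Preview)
The paper does not prove this proposition here; it simply cites it as \cite[Prop.~6.4]{Gui20}. Your approach is correct and is essentially the standard argument: construct a relative action of $\pi_*\big(\scr V_{\fk X}\otimes\omega_{\mc C/\mc B}(\star\SX)\big)$ on $\scr W_{\fk X}(\Wbb_\blt)$ that restricts fibrewise to the defining action, show via Serre duality and base change that every fibrewise global section lifts locally to a relative one, and then propagate the vanishing of $b\mapsto\upphi|_b(\wtd\nu|_b\cdot\wtd w|_b)$ by the identity theorem along a chain of connected Stein opens covering a path.

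One minor point: your degree argument for $H^0\big(\mc C_b,(\scr V_{\mc C_b}^{\leq n})^\vee(-m\SX|_b)\big)=0$ is phrased as though the bundle were a line bundle. A cleaner justification is that on each connected component the spaces $H^0\big((\scr V_{\mc C_b}^{\leq n})^\vee(-m\SX|_b)\big)$ form a decreasing chain inside the finite-dimensional $H^0\big((\scr V_{\mc C_b}^{\leq n})^\vee\big)$ whose intersection consists of sections vanishing to infinite order at a marked point, hence is zero; so the chain is eventually zero. The bookkeeping you flag in your final paragraph --- holomorphic dependence of the residue action and its fibrewise restriction --- is indeed the only substantive content, and it is routine once one writes out the action in the trivialization $\mc U(\eta_\blt)$.
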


\section{Sewing conformal blocks}\label{lb18}

Let $N,M\in\Zbb_+$. Let
\begin{align*}
\wtd{\fk X}=(\wtd\pi:\wtd{\mc C}\rightarrow\wtd{\mc B};\sgm_1,\dots,\sgm_N;\sgm_1',\dots,\sgm_M';\sgm_1'',\dots,\sgm_M'';)
\end{align*}
be a family of $(N+2M)$-pointed compact Riemann surfaces. Unless otherwise stated, we assume the following condition.
\begin{ass}\label{lb26}
We assume that for every $\wtd b\in\wtd{\mc B}$, each connected component of the fiber $\wtd{\mc C}_{\wtd b}$ contains one of $\sgm_1(\wtd b),\dots,\sgm_N(\wtd b)$.  
\end{ass}
For each $1\leq j\leq M$,  we assume $\wtd{\fk X}$ has  local coordinates $\xi_j$ at $\sgm_j'(\wtd{\mc B})$ defined on a neighborhood $W_j'\subset\wtd{\mc C}$ of $\sgm_j'(\wtd{\mc B})$ and similarly $\varpi_j$ at $\sgm_j''(\wtd{\mc B})$ defined on a neighborhood $W_j''$. We assume all $W_j',W_j''$ ($1\leq j\leq M$) are mutually disjoint and are also disjoint from $\sgm_1(\wtd{\mc B}),\dots,\sgm_N(\wtd {\mc B})$, so that $\sgm_1(\wtd{\mc B}),\dots,\sgm_N(\wtd {\mc B})$ remain after sewing. We also assume that for each $1\leq j\leq M$, we can choose $r_j,\rho_j>0$  such that
\begin{gather}
	(\xi_j,\wtd\pi):W_j'\xrightarrow{\simeq} \mc D_{r_j}\times\wtd{\mc B}\qquad\text{resp.}\qquad (\varpi_j,\wtd\pi):W_j''\xrightarrow{\simeq} \mc D_{\rho_j}\times\wtd{\mc B}\label{eq94}
\end{gather}
is a biholomorphic map. (Recall that $\mc D_r$ is the open disc at $0\in\Cbb$ with radius $r$.) 

We do not assume $\wtd{\fk X}$ has local coordinates at $\sgm_1(\wtd{\mc B}),\dots,\sgm_N(\wtd{\mc B})$.

\subsection*{Sewing families of compact Riemann surfaces}

We can \textbf{sew $\wtd{\fk X}$ along all pairs $\sgm_j'(\wtd{\mc B}),\sgm_j''(\wtd{\mc B})$} to obtain a new family
\begin{align}
\fk X=(\pi:\mc C\rightarrow\mc B;\sgm_1,\dots,\sgm_N)	
\end{align}
of compact Riemann surfaces. Here, \index{Drrho@$\mc D_{r_\blt\rho_\blt}^\times$}
\begin{gather*}
\mc B=	\mc D^\times_{r_\blt\rho_\blt}\times\wtd{\mc B},\qquad \mc D^\times_{r_\blt\rho_\blt}=\mc D^\times_{r_1\rho_1}\times\cdots\times\mc D^\times_{r_M\rho_M}.
\end{gather*}
$\fk X$ is described as follows.

For each $q_\blt\in\mc D_{r_\blt\rho_\blt}^\times$ and $b\in\wtd{\mc B}$, the fiber $\mc C_{(q_\blt,b)}$ is obtained by  removing the closed discs 
\begin{align*}
F_{j,b}'=\{y\in W_j'\cap\wtd{\mc C}_b:|\xi_j(y)|\leq |q_j|/\rho_j\},\qquad F_{j,b}''=\{y\in W_j''\cap\wtd{\mc C}_b:|\varpi_j(y)|\leq |q_j|/r_j\}	
\end{align*}
(for all $j$) from $\wtd{\mc C}_b$, and gluing the remaining part of the Riemann surface $\wtd {\mc C}_b$ by identifying (for all $j$) $y'\in W_j'\cap \wtd{\mc C}_b$ with $y''\in W_j''\cap\wtd{\mc C}_b$ if $\xi_j(y')\varpi_j(y'')=q_j$. This procedure can be performed in a consistent way over all $b\in\wtd{\mc B}$, which gives $\pi:\mc C\rightarrow\mc B$. See for instance \cite[Sec. 4]{Gui23} for details.\footnote{Indeed,  one can extend $\fk X$ to a slightly larger flat family of complex curves (with at worst nodal singularities) with base manifold $\mc D_{r_\blt\rho_\blt}\times\wtd{\mc B}$ (cf. for instance \cite[Sec. 4]{Gui23}).} 

Since $\Omega=\wtd{\mc C}\setminus \bigcup_j(W_j'\cup W_j'')$ is not affected by gluing, $\mc D_{r_\blt\rho_\blt}^\times\times\Omega$ can be viewed as a subset of $\fk X$, and the restriction of $\pi$ to this set is $\mc D_{r_\blt\rho_\blt}^\times\times\Omega\xrightarrow{1\otimes\wtd\pi}\mc D_{r_\blt\rho_\blt}^\times\times\wtd{\mc B}=\mc B$. Thus, for each $1\leq i\leq N$ the section $\sgm_i$ for $\wtd{\fk X}$  defines the corresponding section $1\times\sgm_i:\mc D_{r_\blt\rho_\blt}^\times\times\wtd{\mc B}\rightarrow \mc D_{r_\blt\rho_\blt}^\times\times\Omega$, also denoted by $\sgm_i$. A local coordinate $\eta_i$ of $\wtd{\fk X}$ at $\sgm_i(\wtd{\mc B})$ extends constantly over $\mc D_{r_\blt\rho_\blt}^\times$ to a local coordinate of $\fk X$ at $\sgm_i(\mc B)$, also denoted by $\eta_i$.

\subsection*{Sewing conformal blocks}

We now define sewing conformal blocks associated to $\wtd{\fk X}$. Associate to $\sgm_1,\dots,\sgm_N$  $\Vbb$-modules $\Wbb_1,\dots,\Wbb_N$. Then we have $\scr W_{\wtd{\fk X}}(\Wbb_\blt)$ defined by $(\wtd\pi:\wtd{\mc C}\rightarrow\wtd{\mc B};\sgm_1,\dots,\sgm_N)$. For each connected open $\wtd V\subset\wtd{\mc B}$, $\scr W_{\wtd{\fk X}}(\Wbb_\blt)(\wtd V)$ can be identified canonically with a subspace of $\scr W_{\fk X}(\Wbb_\blt)(\mc D_{r_\blt\rho_\blt}^\times\times\wtd V)$ consisting of sections of the latter which are constant with respect to sewing. More precisely, this identification is compatible with restrictions to open subsets of $\wtd V$; moreover, if $\wtd V$ is small enough such that $\wtd{\fk X}|_{\wtd V}$ has local coordinates $\eta_1,\dots,\eta_N$ at $\sgm_1(\wtd V),\dots,\sgm_N(\wtd V)$ which give rise to $\eta_1,\dots,\eta_N$ of $\eta_1,\dots,\eta_N$ of $\fk X|_{\mc D_{r_\blt\rho_\blt}^\times\times\wtd V}$ at $\sgm_1(\mc D_{r_\blt\rho_\blt}^\times\times\wtd V),\dots,\sgm_N(\mc D_{r_\blt\rho_\blt}^\times\times\wtd V)$ (which are constant over $\mc D_{r_\blt\rho_\blt}^\times$), then the following diagram commutes:
\begin{equation}\label{eq28}
\begin{tikzcd}
\scr W_{\wtd{\fk X}}(\Wbb_\blt)(\wtd V) \arrow[r,hook] \arrow[dd,"\simeq"',"\mc U(\eta_\blt)"] &\scr W_{\fk X}(\Wbb_\blt)(\mc D_{r_\blt\rho_\blt}^\times\times\wtd V)\arrow[dd,"\simeq","\mc U(\eta_\blt)"']\\\\
\Wbb_\blt\otimes_\Cbb\scr O(\wtd V) \arrow[r,hook]&\Wbb_\blt\otimes_\Cbb\scr O(\mc D_{r_\blt\rho_\blt}^\times\times\wtd V)
\end{tikzcd}	
\end{equation}
where the bottom horizontal line is defined by pulling pack the projection $\mc D_{r_\blt\rho_\blt}^\times\times\wtd V\rightarrow\wtd V$.

Associate to $\sgm_1',\dots,\sgm_M'$  $\Vbb$-modules $\Mbb_1,\dots,\Mbb_M$, whose contragredient modules $\Mbb_1',\dots,\Mbb_M'$ are associated to $\sgm_1'',\dots,\sgm_M''$. We understand $\Wbb_\blt\otimes\Mbb_\blt\otimes\Mbb_\blt'$ as
\begin{gather*}
\Wbb_1\otimes\cdots\otimes \Wbb_N\otimes\Mbb_1\otimes\Mbb_1'\otimes\cdots\otimes\Mbb_M\otimes\Mbb_M',
\end{gather*}
where the order has be changed so that each $\Mbb_j'$ is next to $\Mbb_j$. We can then identify
\begin{align}\label{eq27}
\scr W_{\wtd{\fk X}}(\Wbb_\blt\otimes\Mbb_\blt\otimes\Mbb_\blt')=\scr W_{\wtd{\fk X}}(\Wbb_\blt)\otimes_\Cbb\Mbb_\blt\otimes\Mbb_\blt'	
\end{align}
such that whenever $\wtd V\subset\wtd{\mc B}$ is open such that $\wtd{\fk X}|_{\wtd V}$ has local coordinates $\eta_1,\dots,\eta_N$ at $\sgm_1(\wtd V),\dots,\sgm_N(\wtd V)$ as before, the following diagram commutes:
\begin{equation}
\begin{tikzcd}
\scr W_{\wtd{\fk X}}(\Wbb_\blt\otimes\Mbb_\blt\otimes\Mbb_\blt')\big|_{\wtd V} \arrow[rr,leftrightarrow,"="] \arrow[ddr,"\simeq"',"{\mc U(\eta_\blt,\xi_\blt,\varpi_\blt)}"] &&\scr W_{\wtd{\fk X}}(\Wbb_\blt)\big|_{\wtd V}\otimes_\Cbb\Mbb_\blt\otimes\Mbb_\blt'\arrow[ddl,"\simeq","\mc U(\eta_\blt)\otimes\id"']\\{}\\
&\Wbb_\blt\otimes\Mbb_\blt\otimes\Mbb_\blt'\otimes_\Cbb\scr O_{\wtd V} 
\end{tikzcd}	
\end{equation}

We define
\begin{gather*}
q_j^{\wtd L_0}\btr\otimes_j\btl=\sum_{n\in\Nbb} q_j^n\sum_{a\in\fk A_{j,n}} m{(n,a)}\otimes\wch m{(n,a)}\qquad \in(\Mbb_j\otimes\Mbb_j')[[q_j]]
\end{gather*}
where for each $n\in\Nbb,s\in\Cbb$, $\{m{(n,a)}:a\in\fk A_{j,n}\}$ is a basis of $\Wbb{(n)}$ with dual basis	$\{\wch m{(n,a)}:a\in\fk A_{j,n}\}$ in $\Wbb'{(n)}$.



Now, for any conformal block $\uppsi:\scr W_{\wtd{\fk X}}(\Wbb_\blt\otimes\Mbb_\blt\otimes\Mbb_\blt')\rightarrow\scr O_{\wtd{\mc B}}$ associated to the family $\wtd{\fk X}$ and $\Wbb_\blt\otimes\Mbb_\blt\otimes\Mbb_\blt'$, we define an $\scr O_{\wtd{\mc B}}$-module morphism \index{S@$\wtd{\mc S}\uppsi$}
\begin{gather*}
\wtd{\mc S}\uppsi:\scr W_{\wtd{\fk X}}(\Wbb_\blt)\rightarrow\scr O_{\wtd{\mc B}}[[q_1,\dots,q_M]]
\end{gather*}
by sending each section $w$ over an open $\wtd V\subset\wtd{\mc B}$ to 
\begin{align}
\wtd{\mc S}\uppsi(w)=\uppsi\Big(w\otimes (q_1^{\wtd  L_0}\btr\otimes_1\btl)\otimes\cdots\otimes (q_M^{\wtd  L_0}\btr\otimes_M\btl)\Big)\qquad\in\scr O(\wtd V)[[q_1,\dots,q_M]].
\end{align}
The identification \eqref{eq27} is used in this definition. $\wtd{\mc S}\uppsi$ is called the (normalized) \textbf{sewing of $\uppsi$}.

\begin{df}\label{lb19}
Let $X$ be a complex manifold. Consider an element
\begin{align*}
f=\sum_{n_1,\dots,n_M\in\Cbb} f_{n_1,\dots,n_M}q_1^{n_1}\cdots q_M^{n_M}	\qquad\in\scr O(X)\{q_1,\dots,q_M\}
\end{align*}
where each $f_{n_1,\dots,n_M}\in\scr O(X)$. Let $R_1,\dots,R_M\in[0,+\infty]$ and $\mc D_{R_\blt}^\times=\mc D_{R_1}^\times\times\cdots\times\mc D_{R_M}^\times$. For any locally compact subset $\Omega$ of $\mc D_{R_\blt}^\times\times X$, we say that formal series $f$ \textbf{converges absolutely and locally uniformly (a.l.u.)  on $\Omega$}, if for any compact subsets $K\subset\Omega$, we have
\begin{align*}
\sup_{(q_\blt,x)\in K}~\sum_{n_1,\dots,n_M\in\Cbb} \big|f_{n_1,\dots,n_M}(x)q_1^{n_1}\cdots q_M^{n_M}	\big| <+\infty.
\end{align*}  
In the case that $f\in\scr O(X)[[q_1^{\pm 1},\dots,q_M^{\pm 1}]]$, it is clear from complex analysis that $f$ converges a.l.u. on $\mc D_{R_\blt}^\times\times X$ if and only if $f$ is the Laurent series expansion of an element (also denoted by $f$) of $\scr O(\mc D_{R_\blt}^\times\times X)$.
\end{df}

\begin{df}
We say that $\wtd{\mc S}\uppsi$ \textbf{converges a.l.u.} (on $\mc B=\mc D_{r_\blt\rho_\blt}^\times\times\wtd{\mc B}$), if for any open subset $\wtd V\subset\wtd{\mc B}$ and any section $w$ of $\scr W_{\wtd{\fk X}}(\Wbb_\blt)(\wtd V)$, $\wtd{\mc S}\uppsi(w)$  converges a.l.u. on $\mc D_{r_\blt\rho_\blt}^\times\times\wtd V$.
\end{df}


Consider the following condition weaker than assumption \ref{lb26}:
\begin{ass}\label{lb27}
For every $b\in\mc B$, each connected component of the fiber $\mc C_b$ contains one of $\sgm_1(b),\dots,\sgm_N(b)$.
\end{ass}

\begin{thm}[\cite{Gui23}, Thm. 11.3]\label{lb8}
Assume Assumption \ref{lb27} instead of Assumption \ref{lb26}. If $\wtd{\mc S}\uppsi$ converges a.l.u. on  $\mc B=\mc D_{r_\blt\rho_\blt}^\times\times\wtd{\mc B}$, then $\wtd{\mc S}\uppsi$ (resp. $\mc S\uppsi$), when extended $\scr O_{\mc B}$-linearly to an $\scr O_{\mc B}$-module homomorphism $\scr W_{\fk X}(\Wbb_\blt)\rightarrow\scr O_{\mc B}$ using the inclusion $\scr W_{\wtd{\fk X}}(\Wbb_\blt)\subset \scr W_{\fk X}(\Wbb_\blt)$ defined by \eqref{eq28},  is a conformal block associated to $\fk X$ and $\Wbb_\blt$. 
\end{thm}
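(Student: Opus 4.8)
The plan is to reduce the global statement to a local one using Proposition \ref{lb10}, and then to verify the conformal block axiom locally on $\mc B = \mc D_{r_\blt\rho_\blt}^\times \times \wtd{\mc B}$ by interpreting $\wtd{\mc S}\uppsi$ as a genuine $\scr O_{\mc B}$-linear functional and checking vanishing on the action of $H^0(\mc C_b, \scr V_{\mc C_b}\otimes\omega_{\mc C_b}(\star\SX|_b))$ fiber by fiber. First I would fix a small connected open $\wtd V \subset \wtd{\mc B}$ over which $\wtd{\fk X}|_{\wtd V}$ has local coordinates $\eta_1,\dots,\eta_N$ at $\sgm_1,\dots,\sgm_N$, so that (via \eqref{eq28}) every section $w$ of $\scr W_{\wtd{\fk X}}(\Wbb_\blt)$ over $\wtd V$ is identified with a section of $\scr W_{\fk X}(\Wbb_\blt)$ over $\mc D_{r_\blt\rho_\blt}^\times\times\wtd V$ that is constant in the sewing parameters $q_\blt$. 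By hypothesis and Definition \ref{lb19}, $\wtd{\mc S}\uppsi(w)$ converges a.l.u.\ on $\mc D_{r_\blt\rho_\blt}^\times\times\wtd V$, hence defines an honest element of $\scr O(\mc D_{r_\blt\rho_\blt}^\times\times\wtd V)$; extending $\scr O_{\mc B}$-linearly gives a morphism $\scr W_{\fk X}(\Wbb_\blt) \to \scr O_{\mc B}$ as in the statement. What remains is to show that for each point $(q_\blt, b)$ this morphism, restricted to the fiber, kills $H^0(\mc C_{(q_\blt,b)}, \scr V_{\mc C_{(q_\blt,b)}}\otimes\omega_{\mc C_{(q_\blt,b)}}(\star\SX))\cdot\scr W_{\fk X_{(q_\blt,b)}}(\Wbb_\blt)$.

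The key step is to relate sections of $\scr V_{\mc C_q}\otimes\omega_{\mc C_q}(\star\SX)$ on the sewn surface to sections of $\scr V_{\wtd{\mc C}}\otimes\omega_{\wtd{\mc C}}(\star(\SX\cup\{\sgm'_j,\sgm''_j\}))$ on $\wtd{\mc C}$. The standard mechanism — used in \cite{DGT19b} for the formal statement and in \cite{Gui20} for convergence — is that a global meromorphic $\scr V$-valued $1$-form $\nu$ on $\mc C_q$, holomorphic away from $\SX$, pulls back to such a form on $\wtd{\mc C}_b$ with possible poles only at $\SX$ and at the sewing points $x'_j, x''_j$, and that near each sewing pair the residue pairing of $\nu$ against $q_j^{\wtd L_0}\btr\otimes_j\btl$ matches up: concretely, the contributions of $\nu$ localized at $x'_j$ and at $x''_j$, when fed into $\uppsi$ against the two tensor factors $\Mbb_j, \Mbb_j'$ and summed over the graded pieces with the weights $q_j^n$, cancel by the definition of the contragredient action together with \eqref{eq34}. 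This is exactly the computation that shows $\wtd{\mc S}\uppsi$ is a "formal conformal block" in \cite[Thm. 8.5.1]{DGT19b}; convergence a.l.u.\ is what upgrades the formal identity to an identity of holomorphic functions on $\mc B$, legitimizing term-by-term manipulation of the series (uniform convergence allows swapping the residue/finite sum over the marked points $\sgm_1,\dots,\sgm_N$ with the infinite sum over $n$, and allows evaluating at an actual $q_\blt$ rather than treating $q_\blt$ formally).

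The main obstacle I expect is the convergence bookkeeping: a priori $\wtd{\mc S}\uppsi$ is only defined on the constant-in-$q$ sections coming from $\scr W_{\wtd{\fk X}}(\Wbb_\blt)$, and to test the conformal block condition on the sewn fiber one must act by $H^0(\mc C_q, \scr V_{\mc C_q}\otimes\omega_{\mc C_q}(\star\SX))$, whose action produces sections of $\scr W_{\fk X}(\Wbb_\blt)$ that need not be constant in $q$ — so one must argue that the $\scr O_{\mc B}$-linear extension of $\wtd{\mc S}\uppsi$ is well-defined on these and that the vanishing, known formally from \cite{DGT19b}, survives the passage to convergent series on the punctured polydisc. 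The clean way around this is to invoke Proposition \ref{lb10}: it suffices to check the conformal block property after further restricting $\mc B$ to a nonempty open subset of each connected component, and on such a subset one can choose the test $1$-forms to vary holomorphically and appeal to the a.l.u.\ convergence hypothesis directly. I would also need to be a little careful that "conformal block" here is the $\scr W$-sheaf / Strong Residue Theorem formulation of Section 4 rather than the module-invariance formulation, so that the vanishing of $\upphi$ on $\scr J$ is literally what is being checked; since the excerpt adopts precisely this definition, no extra translation is needed. Granting \cite[Thm. 8.5.1]{DGT19b} for the formal version and Definition \ref{lb19} for what a.l.u.\ convergence buys, the remaining work is the routine but delicate verification that the two localized residue contributions at each sewing pair $(x'_j, x''_j)$ cancel after summing the weighted series — which, being the technical heart, is exactly the part I would write out in full.
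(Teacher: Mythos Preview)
The paper does not give a proof of this theorem: it is quoted verbatim from \cite[Thm.~11.2]{Gui20} and used as a black box, so there is no in-paper argument against which to compare your proposal.

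That said, your sketch is the right shape and matches the strategy one finds in \cite{Gui20}: reduce via Prop.~\ref{lb10} to a local statement, use the formal-conformal-block result \cite[Thm.~8.5.1]{DGT19b} to see that the coefficient-by-coefficient vanishing holds, and then use a.l.u.\ convergence to upgrade the formal identity in $\scr O(\wtd V)[[q_\blt]]$ to an identity of holomorphic functions on $\mc D_{r_\blt\rho_\blt}^\times\times\wtd V$. One imprecision is worth flagging: a global section $\nu\in H^0(\mc C_{q_\blt},\scr V_{\mc C_{q_\blt}}\otimes\omega_{\mc C_{q_\blt}}(\star\SX))$ does not literally ``pull back'' to a section on all of $\wtd{\mc C}_b$ with poles only at $\SX\cup\{x'_j,x''_j\}$. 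The sewn fiber is obtained by discarding closed subdiscs of $W_j',W_j''$ and gluing annuli via $\xi_j\varpi_j=q_j$, so $\nu$ only restricts to the surviving open part of $\wtd{\mc C}_b$; near each sewing pair one works instead with the two Laurent expansions on the annuli, related by the gluing relation, and it is at the level of these formal series that the residue contributions against $\Mbb_j$ and $\Mbb_j'$ cancel. This is precisely why the \cite{DGT19b} argument is formal in $q_\blt$ and why the convergence hypothesis is a genuine additional input rather than just bookkeeping.
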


\begin{eg}\label{lb9}
Let $\fk Y=(C;x_1,\dots,x_N)$ be an $N$-pointed compact Riemann surface with local coordinates $\eta_1,\dots,\eta_N$ at $x_1,\dots,x_N$, defined on neighborhoods $W_1,\dots,W_N$ satisfying $W_j\cap\{x_1,\dots,x_N\}=x_j$ for each $1\leq j\leq N$. Assume $\eta_1(W_1)=\mc D_r$ for some $r>0$. Let $\zeta$ be the standard coordinate of $\Cbb$. Let $\wtd{\fk X}$ be the disjoint union of $\fk Y$ and $(\Pbb^1;0,1,\infty)$, namely, we have an $(N+3)$-pointed compact Riemann surface
\begin{align*}
\wtd{\fk X}=(C\sqcup \Pbb^1;x_1,\dots,x_N,0,1,\infty).	
\end{align*}
We equip $\wtd{\fk X}$ with local coordinates $\eta_1,\dots,\eta_N,\zeta,(\zeta-1),\zeta^{-1}$. The local coordinate $\zeta$ at $0$ should be defined at $|z|<1$ so that no marked points other than $0$ is inside this region.

We sew $\wtd{\fk X}$ along $x_1\in C$ and $\infty\in\Pbb^1$ using the chosen local coordinates $\eta_1$ and $1/\zeta$ to obtain a family $\fk X$. Then
\begin{align*}
\fk X=(\pi: C\times\mc D_r^\times\rightarrow\mc D_r^\times;x_1,x_2,\dots,x_N,\sgm)	
\end{align*}
where $\pi$ is the projection onto the $\mc D_r^\times$-component, the sections $x_1,\dots,x_N$ are (rigorously speaking) sections sending $q$ to $(x_1,q),\dots,(x_N,q)$. The section $\sgm$ is defined by $\sgm(q)=(\eta_1^{-1}(q),q)$, where $\eta_1^{-1}$ sends $\mc D_r$ biholomorphically to $W_1$. Moreover, the local coordinates of $\fk X$ defined naturally by those of $\wtd{\fk X}$ are described as follows: For each $|q|<r$, their restrictions to
\begin{align}
\fk X_q=(C;x_1,x_2,\dots,x_N,\eta_1^{-1}(q))  \label{eq31}	
\end{align}
are $q^{-1}\eta_1,\eta_2,\dots,\eta_N,q^{-1}(\eta_1-q)$. 

Note that Assumption \ref{lb27} is always satisfied, but Assumption \ref{lb26} is not satisfied when $N=1$.

Attach $\Vbb$-modules $\Wbb_1,\dots,\Wbb_N,\Wbb_1,\Vbb,\Wbb_1'$ to the marked points $x_1,\dots,x_N,0,1,\infty$ respectively of $\wtd{\fk X}$. Fix the trivializations of $\scr W$-sheaves using the chosen local coordinates. Let $\upphi:\Wbb_1\otimes\cdots\otimes\Wbb_N\rightarrow\Cbb$ be a conformal block associated to $(C;x_1,\dots,x_N)$ and $\Wbb_1,\dots,\Wbb_N$. Let
\begin{gather*}
\upomega:\Wbb_1\otimes\Vbb\otimes\Wbb_1'\rightarrow\Cbb,	\\
w\otimes u\otimes w'\mapsto \bk{Y(u,1)w,w'}=\sum_{n\in\Zbb}\bk{Y(u)_nw,w'},
\end{gather*}
which is a conformal block associated to $(\Pbb^1;0,1,\infty)$ and $\Wbb_1,\Vbb,\Wbb_1'$. Then $\uppsi:=\upphi\otimes\upomega$ is a conformal block for $\wtd{\fk X}$. Note that when $u,w_1$ are $\wtd L_0$-homogeneous  (i.e. eigenvectors of $\wtd L_0$) with eigenvalues (weights) $\wtd\wt(u),\wtd\wt(w_1)\in\Nbb$ respectively, by \eqref{eq34}, $Y(u)_nw_1$ is $\wtd L_0$-homogeneous with weight $\wtd\wt(u)+\wtd\wt(_1)-n-1$. Then
\begin{gather}
\begin{array}{c}
\wtd{\mc S}\uppsi:\Wbb_1\otimes\cdots\Wbb_N\otimes\Vbb\rightarrow\Cbb[[q]]\\[1.5ex]
\wtd{\mc S}\uppsi(w_1\otimes\cdots\otimes w_N\otimes u)=\sum\limits_{n\in\Zbb} q^{\wtd\wt(u)+\wtd\wt(w_1)-n-1}\cdot\uppsi\big(Y(u)_nw_1\otimes w_2\otimes\cdots\otimes w_N \big)
\end{array}	\label{eq29}
\end{gather}
when $u,w_1$ are $\wtd L_0$-homogeneous. 

From \cite[Sec. 10.1]{FB04}, this series converges a.l.u. on $\mc D_r^\times$ (i.e. when $0<|q|<r$). (See the proof of Thm. \ref{lb4} for the detailed explanation.) Then, by Theorem \ref{lb8}, for each $0<|q|<r$, \eqref{eq29} converges to a conformal block associated to $\fk X_q$ and the local coordinates mentioned after \eqref{eq31}. If we change the coordinates at $x_1$ and $\eta_1^{-1}(q)$ to $\eta_1$ and $\eta_1-q$ respectively, then in the formula \eqref{eq29},  $u$ and $w_1$ should be multiplied both by $q^{-\wtd L_0}$. Under the trivialization given by the new coordinates, $\wtd{\mc S}\uppsi(w_1\otimes\cdots\otimes w_N\otimes u)$ equals
\begin{align}
\uppsi\big(Y(u,q)w_1\otimes w_2\otimes\cdots\otimes w_N \big):=\sum\limits_{n\in\Zbb} q^{-n-1}\cdot\uppsi\big(Y(u)_nw_1\otimes w_2\otimes\cdots\otimes w_N \big).\label{eq30}	
\end{align}
We conclude that (once the a.l.u. convergence is established) for all $0<|q|<r$, \eqref{eq30} is a conformal block associated to $\fk X_q$, local coordinates $\eta_1,\eta_2\dots,\eta_N,\eta_1-q$, and modules $\Wbb_1,\dots,\Wbb_N,\Vbb$.
\end{eg}

\section{An equivalence of sheaves}\label{lb15}

Recall $\wr\fk X=(\wr\pi:\wr\mc C\rightarrow\wr\mc B;\sigma,\wr\sgm_1,\dots,\wr\sgm_N)$ in \eqref{eq7}. In particular, $\wr\mc C=\mc C\times_{\mc B}(\mc C\setminus\SX)$, $\wr\mc B=\mc C\setminus\SX$. The goal of this section is to establish a canonical $\scr O_{\mc C\setminus\SX}$-module isomorphism
\begin{align}
\scr W_{\wr\fk X}(\Vbb\otimes\Wbb_\blt)\simeq	\scr V_{\fk X}\otimes_{\scr O_{\mc C}}\pi^*\scr W_{\fk X}(\Wbb_\blt)|_{\mc C\setminus\SX}, \label{eq62}
\end{align}
which relates the sheaves of VOAs and the $\scr W$-sheaves.

The reason for establishing this equivalence is the following: We want to construct $n$-times propagation $\wr^n\upphi$ of a conformal block $\upphi$ associated to a fixed pointed compact Riemann surface $\fk X_0$ by induction on $n$. $\wr^n\upphi$ is the propagation of $\wr^{n-1}\upphi$ where the latter is viewed as a conformal block associated to the family of compact Riemann surfaces (namely $\wr^{n-1}\fk X_0$, using the notations in Sec. \ref{lb2}) describing the motion of $n-1$ distinct points on $\fk X_0$. To understand $\wr^{n-1}\upphi$ as a conformal block, we need to describe the $\scr W$-sheaf on $\wr^{n-1}\fk X_0$ using sheaves of VOAs. By setting $\wr^{n-2}\fk X_0=\fk X$ and hence $\wr\fk X=\wr^{n-1}\fk X_0$, one needs to describe the $\scr W$-sheaf on $\wr\fk X$. This is fulfilled by the isomorphism \eqref{eq62}.

Let us begin the formal discussion. Note that $\pi^*\scr W_{\fk X}(\Wbb_\blt)$ is the pullback sheaf $\scr W_{\fk X}(\Wbb_\blt)\otimes_{\scr O_{\mc B}}\scr O_{\mc C}$. This is the sheaf for the presheaf associating to each open $U\subset\mc C$ the $\scr O(U)$-module $\scr W_{\fk X}(\Wbb_\blt)\big(\pi(U)\big)\otimes_{\scr O(\pi(U))}\scr O(U)$. (Note that $\pi$ is an open map.)  Assume the restriction $\fk X_{\pi(U)}$ has local coordinates $\eta_1,\dots,\eta_N$ at $\sgm_1(\pi(U)),\dots,\sgm_N(\pi(U))$. We write
\begin{align*}
\pi^* w:=w\otimes 1\in\scr W_{\fk X}(\Wbb_\blt)\otimes_{\mc B}\scr O_{\mc C}=\pi^*\scr W_{\fk X}(\Wbb_\blt)	
\end{align*}
for any section $w\in\scr W_{\fk X}(\Wbb_\blt)$. Sheafifying the tensor product $\mc U(\eta_\blt)\otimes 1$ on the presheaf provides an isomorphism of $\scr O_{\mc C}$-modules \index{Uzz@$\pi^*\mc U(\eta_\blt)$}
\begin{align}
\pi^*\mc U(\eta_\blt)\equiv \mc U(\eta_\blt)\otimes 1:\scr W_{\fk X}(\Wbb_\blt)\big|_U\otimes_{\scr O_{\pi(U)}}\scr O_U\xrightarrow{\simeq}\Big(\Wbb_\blt\otimes_\Cbb\scr O_{\pi(U)}\Big)\otimes_{\scr O_{\pi(U)}}\scr O_U\label{eq13}
\end{align}
or simply a trivialization (i.e. an $\scr O_U$-module isomorphism)
\begin{align}
\pi^*\mc U(\eta_\blt):\pi^*\scr W_{\fk X}(\Wbb_\blt)\big|_U\xrightarrow{\simeq}\Wbb_\blt\otimes_\Cbb\scr O_U.	
\end{align}
Choose $\mu\in\scr O(U)$ injective on each fiber of $U$. Then we have a trivialization
\begin{align}
\boxed{~\mc U_\varrho(\mu)\otimes\pi^*\mc U(\eta_\blt):	\scr V_{\fk X}\otimes\pi^*\scr W_{\fk X}(\Wbb_\blt)\big|_U\xrightarrow{\simeq}\Vbb\otimes\Wbb_\blt\otimes_\Cbb\scr O_U~}\label{eq16}
\end{align}

Now assume $U\subset\mc C\setminus\SX=\wr\mc B$. Then we can equip the family $\wr\fk X_U$ with local coordinates as follows. For the local coordinate at each submanifold $\wr\sgm_j(U)$ of $\wr\mc C_U=\wr\mc C\cap\wr\pi^{-1}(U)$, we choose $\wr\eta_j$ defined by \index{zz@$\wr\eta_j$, $\wr^n\eta_j$}
\begin{gather}
\wr\eta_j(x,y)=\eta_j(x)
\end{gather}
whenever $(x,y)\in\mc C\times_{\mc B}\mc C\setminus\SX$ makes the above definable. The local coordinate at $\sigma(U)$ is $\triangle\mu$ given by  \index{zz@$\triangle\mu$, $\triangle_i\mu$}
\begin{align}
\triangle\mu(x,y)=\mu(x)-\mu(y)\label{eq9}
\end{align}
when $(x,y)\in U\times_{\mc B} U$. (Recall that $\sigma$ is the diagonal map.) We can then use $\triangle\mu,\wr\eta_\blt=(\wr\eta_1,\dots,\wr\eta_N)$ to obtain a trivialization
\begin{align}
\boxed{~\mc U(\triangle\mu,\wr\eta_\blt):\scr W_{\wr\fk X}(\Vbb\otimes\Wbb_\blt)|_U\xrightarrow{\simeq} \Vbb\otimes\Wbb_\blt\otimes_\Cbb\scr O_U~}\label{eq17}
\end{align}

We shall relate the two trivializations. First, we need a lemma. Recall $U\subset\mc C\setminus\SX$. Recall \eqref{eq10} and \eqref{eq8}.

\begin{lm}\label{lb3}
Suppose $\eta'_1,\dots,\eta_N'$ are local coordinates of $\fk X_{\pi(U)}$ at $\sgm_1(\pi(U)),\dots,\sgm_N(\pi(U))$ respectively, and $\mu'\in\scr O(U)$ is injective on each fiber of $U$. Then, for each $x\in U$, we have
\begin{gather*}
(\wr\eta_j|\wr\eta'_j)_x=(\eta_j|\eta_j')_{\pi(x)},\qquad (\triangle\mu|\triangle\mu')_x=\varrho(\mu|\mu')_x.
\end{gather*}
\end{lm}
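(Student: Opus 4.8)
The plan is to verify both identities pointwise by unwinding the defining equations \eqref{eq10} and \eqref{eq8}, since both sides are elements of the group $\Gbb$ attached to a point $x\in U$ and the claim is a statement about germs of holomorphic functions at $0\in\Cbb$. The key observation is that $\wr\fk X$ is, fiberwise, just $\fk X$ with extra marked points added, and that $\wr\pi$ is the pullback of $\pi$; so transition functions upstairs should literally be pullbacks of transition functions downstairs, except at the diagonal section $\sigma$, where the local coordinate $\triangle\mu$ is a \emph{difference} $\mu(x)-\mu(y)$ rather than a value of $\mu$.

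For the first identity, fix $x\in U\subset\mc C\setminus\SX$ and set $b=\pi(x)$. The section $\wr\sgm_j$ over $\wr\mc B=\mc C\setminus\SX$ passes through the point $p=(\sgm_j(b),x)$ in the fiber $\wr\mc C_x\simeq\mc C_b$ (under the canonical identification of the pulled-back fiber with $\mc C_b$). Under this identification $\wr\eta_j(x',y)=\eta_j(x')$ restricts on the fiber to $\eta_j|_{\mc C_b}$, and likewise $\wr\eta_j'$ restricts to $\eta_j'|_{\mc C_b}$. Now apply the defining box \eqref{eq8} for $\wr\fk X$ at the section $\wr\sgm_j$: the germ $(\wr\eta_j|\wr\eta_j')_x$ is the element of $\Gbb$ with $(\wr\eta_j|\wr\eta_j')_x\circ(\wr\eta_j'|_{\text{fiber}})=\wr\eta_j|_{\text{fiber}}$, i.e.\ $(\wr\eta_j|\wr\eta_j')_x\circ(\eta_j'|_{\mc C_b})=\eta_j|_{\mc C_b}$. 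But this is exactly the equation \eqref{eq8} defining $(\eta_j|\eta_j')_b=(\eta_j|\eta_j')_{\pi(x)}$, so the two germs coincide by uniqueness of the solution in $\Gbb$.

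For the second identity, again fix $x\in U$ with $b=\pi(x)$; now the relevant section is the diagonal $\sigma$, which passes through $p=(x,x)$ in the fiber $\wr\mc C_x\simeq\mc C_b$. The local coordinate $\triangle\mu(x',y)=\mu(x')-\mu(y)$ restricted to the fiber $y=x$ (so $x'$ runs over $\mc C_b$ near $x$) is $x'\mapsto\mu(x')-\mu(x)=\big(\mu-\mu(x)\big)\big|_{\mc C_b}$; similarly $\triangle\mu'$ restricts to $\big(\mu'-\mu'(x)\big)\big|_{\mc C_b}$. The defining box \eqref{eq8} for $\wr\fk X$ at $\sigma$ gives $(\triangle\mu|\triangle\mu')_x$ as the element of $\Gbb$ with $(\triangle\mu|\triangle\mu')_x\circ\big(\mu'-\mu'(x)\big)\big|_{\mc C_b}=\big(\mu-\mu(x)\big)\big|_{\mc C_b}$. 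Comparing with \eqref{eq10}, which defines $\varrho(\mu|\mu')_x$ by $\eta-\eta(p)=\varrho(\eta|\mu)_p(\mu-\mu(p))$ with $\eta=\mu$ there, i.e.\ $\big(\mu-\mu(x)\big)\big|_{\mc C_b}=\varrho(\mu|\mu')_x\big(\big(\mu'-\mu'(x)\big)\big|_{\mc C_b}\big)$, we see the two germs satisfy the same defining relation, hence are equal.

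The only genuine subtlety — and the place I would be careful — is keeping the two identifications straight: the canonical identification $\wr\mc C_x\cong\mc C_{\pi(x)}$ coming from the fiber-product description of $\wr\mc C$, and the fact that under it $\wr\eta_j$ restricts to $\eta_j|_{\mc C_b}$ while $\triangle\mu$ restricts not to $\mu|_{\mc C_b}$ but to $\mu|_{\mc C_b}$ recentered at $x$. Once these restrictions are pinned down, both identities are immediate from the uniqueness clause in the definitions of $(\,\cdot\mid\cdot\,)$ and $\varrho(\,\cdot\mid\cdot\,)$, and no computation beyond this bookkeeping is needed. I do not expect any real obstacle beyond this.
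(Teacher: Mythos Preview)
Your proposal is correct and follows essentially the same approach as the paper's own proof: both identify the fiber $\wr\mc C_x$ with $\mc C_{\pi(x)}$, compute that $\wr\eta_j$ and $\triangle\mu$ restrict to $\eta_j|_{\mc C_b}$ and $(\mu-\mu(x))|_{\mc C_b}$ respectively, and then read off the two identities from the defining equations \eqref{eq8} and \eqref{eq10} by uniqueness in $\Gbb$. Your closing remark about the recentering of $\mu$ at $x$ is exactly the point the paper handles via \eqref{eq9}.
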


Note that $(\wr\eta_j|\wr\eta'_j)$ is a family of transformations over $U\subset\wr\mc B=\mc C\setminus\SX$, and the transformation over the point $x$ is $(\wr\eta_j|\wr\eta'_j)_x$. $(\triangle\mu|\triangle\mu')_x$ is understood in a similar way.

\begin{proof}
We identify $\wr\mc C_x$ with $\mc C_{\pi(x)}$ by identifying $(y,x)\in\mc C\times_{\mc B}\mc C\setminus\SX$ with $y\in\mc C_{\pi(x)}$. Then, from the definition of $\wr\eta_j,\wr\eta_j'$, we clearly have $\wr\eta_j|_{\wr\mc C_x}=\eta_j|_{\mc C_{\pi(x)}}$ and $\wr\eta_j'|_{\wr\mc C_x}=\eta_j'|_{\mc C_{\pi(x)}}$. By \eqref{eq8}, we have
\begin{gather*}
(\wr\eta_j|\wr\eta_j')_x\circ\wr\eta_j'|_{\wr\mc C_x}=\wr\eta_j|_{\wr\mc C_x},\\
(\eta_j|\eta_j')_{\pi(x)}\circ\eta_j'|_{\mc C_{\pi(x)}}=\eta_j|_{\mc C_{\pi(x)}}.
\end{gather*}
This proves $(\wr\eta_j|\wr\eta'_j)_x=(\eta_j|\eta_j')_{\pi(x)}$. 

Similarly, 
\begin{gather*}
(\triangle\mu|\triangle\mu')_x\circ\triangle\mu'|_{\wr\mc C_x}=\triangle\mu|_{\wr\mc C_x}.
\end{gather*}
By \eqref{eq9}, we have $\triangle\mu|_{\wr\mc C_x}=(\mu-\mu(x))|_{\mc C_{\pi(x)}}$ and $\triangle\mu'|_{\wr\mc C_x}=(\mu'-\mu'(x))|_{\mc C_{\pi(x)}}$. These imply
\begin{gather*}
(\triangle\mu|\triangle\mu')_x\circ(\mu'-\mu'(x))|_{\mc C_{\pi(x)}}=(\mu-\mu(x))|_{\mc C_{\pi(x)}}.
\end{gather*}
Comparing this relation with \eqref{eq10} shows that $(\triangle\mu|\triangle\mu')_x=\varrho(\mu|\mu')_x$.
\end{proof}

\begin{pp}\label{lb5}
We have a unique isomorphism of $\scr O_{\mc C\setminus\SX}$-modules (i.e. a unique isomorphism of holomorphic vector bundles on $\mc C\setminus\SX$)
\begin{align}
\Psi_{\fk X}:\scr W_{\wr\fk X}(\Vbb\otimes\Wbb_\blt)\xrightarrow{\simeq} \scr V_{\fk X}\otimes_{\scr O_{\mc C}}\pi^*\scr W_{\fk X}(\Wbb_\blt)\big|_{\mc C\setminus\SX}\label{eq32}	
\end{align}
such that for any open $U\subset\mc C\setminus\SX$ and $\mu,\triangle\mu,\wr\eta_\blt$ as above, the restriction of this isomorphism to $U$ makes the following diagram commutes.
\begin{equation}
\begin{tikzcd}
\scr W_{\wr\fk X}(\Vbb\otimes\Wbb_\blt)\big|_U \arrow[rr,"\simeq"',"\Psi_{\fk X}"] \arrow[ddr, "\simeq"' ,"{\mc U(\triangle\mu,\wr\eta_\blt)}"]  && \scr V_{\fk X}\otimes\pi^*\scr W_{\fk X}(\Wbb_\blt)\big |_U \arrow[ddl,"\simeq","{\mc U_\varrho(\mu)\otimes\pi^*\mc U(\eta_\blt)}"']\\
{}\\
& \Vbb\otimes\Wbb_\blt\otimes_\Cbb \scr O_U
\end{tikzcd}\label{eq26}	
\end{equation}
\end{pp}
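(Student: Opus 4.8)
The plan is to build $\Psi_{\fk X}$ by gluing the evident local isomorphisms and to check that they patch by comparing transition functions, which is precisely what Lemma \ref{lb3} makes possible. First I would observe that every point of $\mc C\setminus\SX$ has an open neighborhood $U$ of the type appearing in \eqref{eq16} and \eqref{eq17}: since $\pi$ is a submersion there is $\mu\in\scr O(U)$ injective on each fiber of $U$, and after shrinking $U$ we may assume in addition that $\fk X_{\pi(U)}$ carries local coordinates $\eta_1,\dots,\eta_N$ at $\sgm_1(\pi(U)),\dots,\sgm_N(\pi(U))$. One should also record that $\triangle\mu$ and $\wr\eta_1,\dots,\wr\eta_N$ are genuine local coordinates of $\wr\fk X_U$: $\triangle\mu$ vanishes on $\sigma(U)$ by \eqref{eq9}, and under the identification $\wr\mc C_x\simeq\mc C_{\pi(x)}$ its restriction to a fiber is $(\mu-\mu(x))|_{\mc C_{\pi(x)}}$, which is injective near $x$; likewise $\wr\eta_j$ vanishes on $\wr\sgm_j(U)$ and restricts fiberwise to $\eta_j|_{\mc C_{\pi(x)}}$. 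Thus $\mc U(\triangle\mu,\wr\eta_\blt)$ is defined, and on such $U$ I set
\[
\Psi_{\fk X}|_U:=\big(\mc U_\varrho(\mu)\otimes\pi^*\mc U(\eta_\blt)\big)^{-1}\circ\mc U(\triangle\mu,\wr\eta_\blt),
\]
a composite of isomorphisms which makes \eqref{eq26} commute. Since these $U$ cover $\mc C\setminus\SX$ and commutativity of \eqref{eq26} forces exactly this formula on each of them, uniqueness of $\Psi_{\fk X}$ follows once existence is proved.

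The key step is to show that $\Psi_{\fk X}|_U$ and $\Psi_{\fk X}|_{U'}$ agree on $U\cap U'$ when $U'$ carries a second pair $(\mu',\eta'_\blt)$. Unwinding the two definitions, this is equivalent to the identity of $\scr O_{U\cap U'}$-module automorphisms
\[
\big(\mc U_\varrho(\mu')\otimes\pi^*\mc U(\eta'_\blt)\big)\big(\mc U_\varrho(\mu)\otimes\pi^*\mc U(\eta_\blt)\big)^{-1}=\mc U(\triangle\mu',\wr\eta'_\blt)\,\mc U(\triangle\mu,\wr\eta_\blt)^{-1}.
\]
By the transition formulas \eqref{eq12} and \eqref{eq11}, the fact that $\mc U$ is a homomorphism, and \eqref{eq13}, the left-hand side equals $\mc U(\varrho(\mu'|\mu))\otimes\pi^*\big(\mc U(\eta'_1|\eta_1)\otimes\cdots\otimes\mc U(\eta'_N|\eta_N)\big)$, whose value at $x$ is $\mc U(\varrho(\mu'|\mu)_x)\otimes\mc U((\eta'_1|\eta_1)_{\pi(x)})\otimes\cdots\otimes\mc U((\eta'_N|\eta_N)_{\pi(x)})$, while the right-hand side equals $\mc U(\triangle\mu'|\triangle\mu)\otimes\mc U(\wr\eta'_1|\wr\eta_1)\otimes\cdots\otimes\mc U(\wr\eta'_N|\wr\eta_N)$, with value $\mc U((\triangle\mu'|\triangle\mu)_x)\otimes\mc U((\wr\eta'_1|\wr\eta_1)_x)\otimes\cdots\otimes\mc U((\wr\eta'_N|\wr\eta_N)_x)$ at $x$. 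Lemma \ref{lb3} gives $(\triangle\mu'|\triangle\mu)_x=\varrho(\mu'|\mu)_x$ and $(\wr\eta'_j|\wr\eta_j)_x=(\eta'_j|\eta_j)_{\pi(x)}$ for all $j$, so the two sides coincide at every $x$, hence as morphisms. Therefore the $\Psi_{\fk X}|_U$ glue to a morphism $\Psi_{\fk X}$ on $\mc C\setminus\SX$, which is an isomorphism because it is one locally.

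I expect the only real obstacle to be this transition-function comparison, and it is essentially reduced to Lemma \ref{lb3} together with the multiplicativity of $\mc U$ and the observation that tensor-product trivializations have tensor-product transition functions; everything else is bookkeeping. The two supporting points I would be careful to state explicitly are the existence of the local data $(\mu,\eta_\blt)$ near every point of $\mc C\setminus\SX$ and the verification that $(\triangle\mu,\wr\eta_\blt)$ really are admissible local coordinates for $\wr\fk X_U$, since the left-hand trivialization in \eqref{eq26} has no meaning without the latter.
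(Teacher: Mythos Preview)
Your proof is correct and follows essentially the same approach as the paper: define $\Psi_{\fk X}$ locally by the formula forced by \eqref{eq26}, then verify well-definedness on overlaps by comparing the transition functions of the two sheaves, which reduces via \eqref{eq11}, \eqref{eq12}, \eqref{eq13} to the identities of Lemma \ref{lb3}. Your additional remarks on the existence of local data $(\mu,\eta_\blt)$ and on $(\triangle\mu,\wr\eta_\blt)$ being admissible local coordinates are helpful clarifications that the paper leaves implicit.
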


\begin{proof}
One can define an isomorphism $\Psi_{\fk X}$ such that the above diagram commutes. Such isomorphism is clearly unique. Thus, it remains to check that $\Psi_{\fk X}$ is well-defined. We will do so by checking that the transition functions of the two sheaves agree.

Assume $U$ is small enough such that we can have another set of $\mu',\eta_\blt'$ similar to $\mu,\eta_\blt$. Then by \eqref{eq11} and Lemma \ref{lb3}, for each $x\in U$, we have equalities
\begin{align}
&\mc U(\triangle\mu,\wr\eta_\blt)_x\cdot \mc U(\triangle\mu',\wr\eta_\blt')_x^{-1}=\mc U(\triangle\mu|\triangle\mu')_x\otimes\mc U(\wr\eta_1|\wr\eta_1')_x\otimes\cdots\otimes \mc U(\wr\eta_N|\wr\eta_N')_x \nonumber \\
=&\mc U(\varrho(\mu|\mu')_x)\otimes \mc U(\eta_1|\eta_1')_{\pi(x)}\otimes\cdots\otimes \mc U(\eta_N|\eta_N')_{\pi(x)}\label{eq14}
\end{align}
for transformations on $\Vbb\otimes\Wbb_\blt\otimes_\Cbb\scr O_U|_x\simeq \Vbb\otimes\Wbb_\blt$. 

By \eqref{eq11} and \eqref{eq13},  we have 
\begin{align}
	\big(\pi^*\mc U(\eta_\blt)\big)_x\cdot \big(\pi^*\mc U(\eta_\blt')\big)^{-1}_x=\mc U(\eta_1|\eta_1')_{\pi(x)}\otimes\cdots\otimes \mc U(\eta_N|\eta_N')_{\pi(x)}	
\end{align}
for automorphisms of $\Wbb_\blt\otimes_\Cbb\scr O_U|_x\simeq\Wbb_\blt$. Thus, by \eqref{eq11} and \eqref{eq12},
\begin{align*}
&\big(\mc U_\varrho(\mu)\otimes\pi^*\mc U(\eta_\blt) \big)_x\cdot\big(\mc U_\varrho(\mu')\otimes\pi^*\mc U(\eta_\blt')\big)_x^{-1}\\
=&	\mc U(\varrho(\mu|\mu'))_x\otimes \mc U(\eta_1|\eta_1')_{\pi(x)}\otimes\cdots\otimes \mc U(\eta_N|\eta_N')_{\pi(x)},
\end{align*}
which equals \eqref{eq14}.
\end{proof}

\section{Propagation of conformal blocks}\label{lb22}

The main result of this section, Thm. \ref{lb4}, says that any conformal block $\upphi$ associated to a family of pointed compact Riemann surfaces $\fk X$ has a propagation $\wr\upphi$, which is a conformal block associated to $\wr\fk X$. Recall that, intuitively, $\wr\fk X$ is the family describing the motion of a point on $\fk X$
not meeting the marked points of $\fk X$. A crucial consequence of Thm. \ref{lb4} (reflected by the fact that $\wr\upphi$ is an $\scr O_{\mc C\setminus \SX}$-module) is that $\wr\upphi$ \textit{is simultaneously holomorphic with respect to the  parameter of the base manifold of $\fk X$ and the parameter describing the motion of a point on $\fk X$}. The strong residue theorem is crucial to the proof of this fact.

Let $\upphi:\scr W_{\fk X}(\Wbb_\blt)\rightarrow\scr O_{\mc B}$ be a conformal block associated to $\Wbb_\blt=\Wbb_1\otimes\cdots\otimes\Wbb_N$ and a family $\fk X=(\pi:\mc C\rightarrow\mc B;\sgm_1,\dots,\sgm_N)$ of $N$-pointed compact Riemann surfaces. Recall $\wr\mc C=\mc C\times_{\mc B}(\mc C\setminus\SX)$, $\wr\mc B=\mc C\setminus\SX$. The goal of this section is to prove the following theorem. 

\begin{thm}\label{lb4}
There is a unique $\scr O_{\mc C\setminus\SX}$-module morphism $\wr\upphi:\scr W_{\wr\fk X}(\Vbb\otimes\Wbb_\blt)\rightarrow\scr O_{\mc C\setminus\SX}$ satisfying the  following property: \index{zz@$\wr\upphi$}

"Choose any open subset $V\subset\mc B$  such that the restricted family $\fk X_V$ has local coordinates $\eta_1,\dots,\eta_N$ at $\sgm_1(V),\dots,\sgm_N(V)$. For each $j$, we choose a neighborhood $W_j\subset\mc C_V$ of $\sgm_j(V)$ on which $\eta_j$ is defined, such that $W_j$ intersects only $\sgm_j(V)$ among $\sgm_1(V),\dots,\sgm_N(V)$. Identify 
\begin{align*}
W_j=(\eta_j,\pi)(W_j)	\qquad\text{via }(\eta_j,\pi)
\end{align*}
so that $W_j$ is a neighborhood of $\{0\}\times V$ in $\Cbb\times V$. Let
\begin{align*}
U_j:=W_j\setminus\SX=W_j\setminus(\{0\}\times V)	
\end{align*}
which is inside $\Cbb^\times\times V$. Let $z$ be the standard coordinate of $\Cbb$. Identify
\begin{align*}
\scr W_{\fk X}(\Wbb_\blt)\big|_V=\Wbb_\blt\otimes_\Cbb\scr O_V	\qquad\text{via }\mc U(\eta_\blt).
\end{align*}
Identify 
\begin{align}
\scr W_{\wr\fk X}(\Vbb\otimes\Wbb_\blt)\big|_{U_j}=\Vbb\otimes\Wbb_\blt\otimes_\Cbb\scr O_{U_j}\qquad\text{via }\mc U(\triangle\eta_j,\wr\eta_\blt)\label{eq25}	
\end{align}
(cf. \eqref{eq17}). For each $u\in\Vbb,w_\blt\in\Wbb_\blt$, consider each vector of $\Wbb_\blt$ as a constant section of $\Wbb_\blt\otimes\scr O(U_j)$ and  $u\otimes w_\blt$ as a constant section of $\Vbb\otimes\Wbb_\blt\otimes_\Cbb\scr O(U_j)$. Then the following equation holds at the level of $\scr O(V)[[z^{\pm 1}]]$:
\begin{align}
\boxed{~\upphi\big(w_1\otimes\cdots\otimes Y(u,z)w_j\otimes\cdots\otimes w_N\big)=\wr\upphi(u\otimes w_\blt)~}\label{eq18}
\end{align}
where $Y(u,z)w:=\sum_{n\in\Zbb}Y(u)_nw\cdot z^{-n-1}$  is an element of $\Wbb_j((z))$, and  $\wr\upphi(u\otimes w_\blt)\in \scr O(U_j)$ is regarded as an element of $\scr O(V)[[z^{\pm 1}]]$ by taking Laurent series expansion."

Moreover, $\wr\upphi$ is a conformal block associated to $\wr\fk X$ and $\Vbb\otimes\Wbb_\blt$.
\end{thm}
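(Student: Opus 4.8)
The plan is to first establish the theorem in the local situation over a coordinate patch, where the object $\wr\upphi$ can be written down by an explicit formula and the convergence of the relevant series is supplied by the Strong Residue Theorem; then to glue the locally-defined morphisms into a single morphism $\wr\upphi$ on all of $\mc C\setminus\SX$; and finally to check the conformal block property fiberwise.

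\textbf{Step 1: reduce to the case $\mc B$ a point, and recognize the local model as a sewing.}
I would first observe that by Proposition \ref{lb10} (and the fact that a morphism of $\scr O_{\mc C\setminus\SX}$-modules is determined by its restrictions to an open cover), it suffices to construct $\wr\upphi$ over each $U_j$ and over the open subset $\mc C\setminus(\SX\cup\bigcup_j W_j)$ of points ``far from'' all the $\sgm_i(\mc B)$, and to check compatibility on overlaps. Near $\sgm_j$ the situation is exactly the one described in Example \ref{lb9}: adding one point $y$ close to $\sgm_j(b)$ with coordinate $\eta_j-\eta_j(y)$, and asking for the propagation there, is the same as sewing $\fk X$ to $(\Pbb^1;0,1,\infty)$ along $\sgm_j$ and $\infty$, carrying the conformal block $\uppsi=\upphi\otimes\upomega$ with $\upomega(w\otimes u\otimes w')=\bk{Y(u,1)w,w'}$. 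Thus over $U_j$ I would \emph{define} $\wr\upphi$ by the formula \eqref{eq18}, i.e.\ by the sewing series $\sum_n \upphi(w_1\otimes\cdots\otimes Y(u)_nw_j\otimes\cdots\otimes w_N)z^{-n-1}$; this is precisely the series $\wtd{\mc S}\uppsi$ of \eqref{eq30}. The content to verify is (a) that this series converges a.l.u.\ on $U_j\subset\Cbb^\times\times V$, and (b) that the resulting $\scr O(U_j)$-valued function, when it is reinterpreted via a different local coordinate at $y$ and at $\sgm_j$, transforms correctly, i.e.\ the $\scr O_{U_j}$-module morphism it defines on $\scr W_{\wr\fk X}(\Vbb\otimes\Wbb_\blt)|_{U_j}$ is independent of the choice of $\eta_\blt$ and of whether one uses $\triangle\eta_j$ or another coordinate $\triangle\mu$ at $\sigma$.

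\textbf{Step 2: convergence via the Strong Residue Theorem.}
For (a), I would invoke the Strong Residue Theorem (Thm.\ \ref{lb7} of the paper; in the family version proved in the appendix). Using the isomorphism $\Psi_{\fk X}$ of Proposition \ref{lb5}, sections of $\scr W_{\wr\fk X}(\Vbb\otimes\Wbb_\blt)$ over $U_j$ correspond to sections of $\scr V_{\fk X}\otimes\pi^*\scr W_{\fk X}(\Wbb_\blt)$; the Strong Residue Theorem says that the obstruction to extending a local section of $\scr V_{\fk X}\otimes\omega$ (with prescribed polar part at the new point $y$) to a global meromorphic section on the fiber, with compensating poles only at the $\sgm_i$, is detected precisely by the conformal block, and this extension depends holomorphically on all parameters (including the position of $y$ and $b\in\mc B$). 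Feeding this holomorphic extension into the conformal block $\upphi$ produces, by the very definition of $\upphi$ vanishing on $\scr J$, the function $\wr\upphi(u\otimes w_\blt)$ as an honest element of $\scr O(U_j)$, whose Laurent expansion in $z$ is the series in \eqref{eq18}. Equivalently, I could quote \cite[Sec.\ 10.1]{FB04} together with Theorem \ref{lb8} (as done in Example \ref{lb9}) to get the a.l.u.\ convergence and hence the function on $U_j$, and then note that Theorem \ref{lb8} already tells us $\wr\upphi|_{U_j}$, as a section over the sewn family, is a conformal block.

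\textbf{Step 3: gluing and coordinate-independence.}
For (b) and for gluing, the key computational tool is the change-of-coordinates formula built into $\mc U$ and Huang's change-of-variable formula (the same one underlying the well-definedness of $\scr V_C$ and of the action of $H^0(C,\scr V_C\otimes\omega_C(\star\SX))$). Concretely, \eqref{eq18} defines $\wr\upphi$ on $U_j$ only after fixing $\eta_\blt$ and the coordinate $\triangle\eta_j$ at $\sigma$; switching $\eta_j\rightsquigarrow\mu_j$ multiplies the relevant vectors by $\mc U(\eta_j|\mu_j)$ and $\mc U_\varrho$-factors exactly as in the transition functions \eqref{eq11}, \eqref{eq12}, and by the commuting diagram \eqref{eq26} of Proposition \ref{lb5} this is compatible with the trivialization \eqref{eq17}; thus the \emph{morphism} $\wr\upphi|_{U_j}$ is well-defined independently of choices. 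On the overlap of $U_j$ with the ``far'' region, one uses analytic continuation: the series in \eqref{eq18} and the value obtained from the Strong-Residue extension agree on an open set (where both converge) hence agree wherever both are defined, which lets the local pieces be glued into a single morphism $\wr\upphi:\scr W_{\wr\fk X}(\Vbb\otimes\Wbb_\blt)\to\scr O_{\mc C\setminus\SX}$. Uniqueness is immediate: any two morphisms satisfying \eqref{eq18} agree on each $U_j$, the $U_j$ together with the ``far'' region cover $\mc C\setminus\SX$ up to the locus where the $U_j$ are dense, and a morphism of sheaves agreeing on a covering is equal.

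\textbf{Step 4: the conformal block property.}
Finally, to see that $\wr\upphi$ is itself a conformal block associated to $\wr\fk X$, I would check it fiberwise, i.e.\ over each $b'\in\wr\mc B=\mc C\setminus\SX$, say $b'=(b,y)$ with $y\in\mc C_b\setminus\SXb$. When $y$ is near some $\sgm_j(b)$ this is exactly the assertion that the sewing of $\upphi\otimes\upomega$ is a conformal block on the sewn surface, which is Theorem \ref{lb8} applied to Example \ref{lb9}; the sewn surface $\fk X_q$ there is canonically the original surface $\mc C_b$ with the extra marked point $y=\eta_j^{-1}(q)$, i.e.\ $(\wr\fk X)_{b'}$. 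For general $y$ the result then follows by analytic continuation in $y$: being a conformal block is the condition of annihilating the (finite-dimensional, hence a coherent-sheaf-of-) sections of $\scr V\otimes\omega(\star\SX)$, and this is a closed holomorphic condition in $b'$, so it propagates from the nonempty open set of $y$ near the $\sgm_j$ to all of the (connected components of the) fiber $\mc C_b$; invoking Proposition \ref{lb10} then upgrades this to the family statement.

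\textbf{Main obstacle.}
The delicate point is Step 3, making sure the locally-defined pieces genuinely glue: over $U_j$ the function $\wr\upphi(u\otimes w_\blt)$ is \emph{a priori} only a Laurent series in $z$, and one must know both that it converges to an honest holomorphic function on $U_j$ \emph{and} that this holomorphic function is the restriction of a single holomorphic function on the ``far'' region — i.e.\ that the analytic continuation across $\mc C_b\setminus\SXb$ exists. This is exactly where the Strong Residue Theorem (in its family form) is indispensable: it is what converts ``formal conformal block'' / ``conformal block in an infinitesimal neighborhood of $\sigma$'' into ``conformal block on the whole propagated family'', and it is the reason the argument needs neither $C_2$-cofiniteness nor CFT-type.
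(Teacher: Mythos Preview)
Your overall strategy---Strong Residue Theorem for existence, sewing (Example~\ref{lb9} plus Theorem~\ref{lb8}) for the conformal-block property, Proposition~\ref{lb10} to upgrade from fibers to families---matches the paper's. But your execution of the existence step is more roundabout than necessary, and this is why you perceive Step~3 as the ``main obstacle'' when in fact the paper's argument dissolves it.

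The paper does \emph{not} construct $\wr\upphi$ locally on each $U_j$ and then glue. Instead it applies the Strong Residue Theorem (Thm.~\ref{lb7}) to the finite-rank bundle $\scr E=(\scr V_{\fk X}^{\leq k})^\vee$ on $\mc C$, with the formal series $s_j\in (E_j\otimes\scr O(\mc B))((z))$ defined by $s_j=\sum_n e_{j,n}z^n$ where $e_{j,n}(u)=\upphi(w_1\otimes\cdots\otimes Y(u)_{-n-1}w_j\otimes\cdots\otimes w_N)$. Condition~(c) of Thm.~\ref{lb7} is exactly the statement that $\upphi|_b$ vanishes on $H^0(\mc C_b,\scr V_{\mc C_b}^{\leq k}\otimes\omega_{\mc C_b}(\star\SXb))\cdot w_\blt$, i.e.\ that $\upphi$ is a conformal block. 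The theorem then hands you a \emph{global} section $s\in H^0(\mc C,(\scr V_{\fk X}^{\leq k})^\vee(\star\SX))$, which restricted to $\mc C\setminus\SX$ is an $\scr O$-linear morphism $\scr V_{\fk X}^{\leq k}|_{\mc C\setminus\SX}\to\scr O_{\mc C\setminus\SX}$. Tensoring with $w_\blt$ and taking the limit over $k$ gives $\wr\upphi$ on all of $\mc C\setminus\SX$ at once, via the identification of Prop.~\ref{lb5}. There is no gluing step; the analytic continuation you worry about is already packaged inside Thm.~\ref{lb7}.

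Your description in Step~2 of what the Strong Residue Theorem does (``extending a local section of $\scr V_{\fk X}\otimes\omega$ with prescribed polar part at the new point $y$'') is not the right picture: the bundle is the \emph{dual} $(\scr V_{\fk X}^{\leq k})^\vee$, the poles are at $\SX$ (not at $y$), and nothing needs to be ``fed into $\upphi$'' afterward---the section $s$ \emph{is} the propagation. Also, your uniqueness argument is incomplete: the $U_j$ do not cover $\mc C\setminus\SX$, so agreement on them does not suffice. The paper runs an open--closed argument on each fiber $\mc C_b\setminus\SXb$ (the set where two candidate propagations agree is open and closed, and nonempty since it contains each $W_j\setminus\{x_j\}$; every component of $\mc C_b$ meets some $x_j$).
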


Note that the left hand side of \eqref{eq18} is understood as
\begin{align*}
\sum_{n\in\Zbb} \upphi\big(w_1\otimes\cdots\otimes Y(u)_nw_j\otimes\cdots\otimes w_N\big)z^{-n-1},
\end{align*}
which is in $\scr O(U_j)((z))$. 

\begin{proof}[Proof of the uniqueness of $\wr\upphi$]
It suffices to restrict to the propagation of each fiber $\fk X_b$, i.e., restrict $\wr\upphi$ to a morphism $\upphi|_{\wr(\fk X_b)}:\scr W_{\wr(\fk X_b)}(\Vbb\otimes\Wbb_\blt)\rightarrow\scr O_{\mc C_b\setminus \SXb}$.	(Note that $\wr(\fk X_b)$ is $\mc C_b\times(\mc C_b\setminus\SXb)\rightarrow\mc C_b\setminus\SXb$ with marked points.) By \eqref{eq18}, we know $\wr\upphi|_{\wr(\fk X_b)}$ is uniquely determined on $(W_1\cup\cdots\cup W_N)\cap\mc C_b$. For two possible propagations $\wr_1\upphi,\wr_2\upphi$, let $\Omega$ be the set of all $x\in\mc C_b\setminus\SXb$ on a neighborhood of which $\wr_1\upphi|_{\wr(\fk X_b)}$ agrees with $\wr_2\upphi|_{\wr(\fk X_b)}$. Then $\Omega$ is open and  intersect any connected component of $\mc C_b$. By complex analysis, it is clear that if $U$ is a connected open subset of $\mc C_b\setminus\SXb$ intersecting $\Omega$ such that the restriction $\scr W_{\wr(\fk X_b)}(\Vbb\otimes\Wbb_\blt)|_U$ is equivalent to $\Vbb\otimes\Wbb_\blt\otimes_\Cbb\scr O_U$, then $U\subset\Omega$. So $\Omega$ is closed, and hence must be $\mc C_b\setminus\SXb$. This proves the uniqueness.
\end{proof}

\begin{proof}[Proof that \eqref{eq18} is independent of the choice of $\eta_\blt$]
Let us show that if \eqref{eq18} holds for all $u,w_\blt$ for a set of local coordinates $\eta_\blt$ defined on $W_1,\dots,W_N$, then it holds for another set $\eta_\blt'$. Indeed, it suffices to check this fact when restricted to each fiber $\fk X_b$. So we may assume that $\fk X$ is a single pointed Riemann surface $(C;x_1,\dots,x_N)$. Then \eqref{eq18} is equivalent to that for each $\nu\in H^0(W_j,\scr V_{\fk X}\otimes\omega_C(\star\SX))$, 
\begin{align*}
\upphi (w_1\otimes\cdots\otimes\nu\cdot w_j\otimes\cdots\otimes w_N)=\Res_{x_i}\wr\upphi(\nu\otimes w_\blt)
\end{align*}
where $\nu\cdot w_j$ is defined as in \eqref{eq63}. Then, as explained after \eqref{eq63}, this expression is independent of the choice of local coordinates.
\end{proof}

\begin{proof}[Proof  of the existence of $\wr\upphi$]
We identify $\scr W_{\wr\fk X}(\Vbb\otimes\Wbb_\blt)$ with $\scr V_{\fk X}\otimes\pi^*\scr W_{\fk X}(\Wbb_\blt)\big|_{\mc C\setminus\SX}$ as in Prop. \ref{lb5}, and construct  an $\scr O_{\mc C_\SX}$-module morphism $\wr\upphi:\scr V_{\fk X}\otimes\pi^*\scr W_{\fk X}(\Wbb_\blt)\big|_{\mc C\setminus\SX}\rightarrow\scr O_{\mc C\setminus\SX}$ satisfying \eqref{eq18}. By the uniqueness proved above, we can safely restrict the base manifold $\mc B$ to $V$. So we assume in the following that $\mc B=V$ and hence $\fk X$ has local coordinates $\eta_\blt$ at marked points. So we identify $\scr W_{\fk X(\Wbb_\blt)}$ with $\Wbb_\blt\otimes_\Cbb\scr O_{\mc B}$ through $\mc U(\eta_\blt)$, which yields
\begin{align}
\scr V_{\fk X}\otimes\pi^*\scr W_{\fk X}(\Wbb_\blt)=\scr V_{\fk X}\otimes_\Cbb\Wbb_\blt	\label{eq20}
\end{align}

For each $k\in\Nbb$, we let
\begin{align*}
	\scr E=(\scr V_{\fk X}^{\leq k})^\vee	
\end{align*}
be the dual bundle of $\scr V_{\fk X}^{\leq k}$. Then the identifications $W_j=(\eta_j,\pi)(W_j)$ and
\begin{align}
\scr V_{\fk X}^{\leq k}|_{W_j}=\Vbb^{\leq k}\otimes_\Cbb\scr O_{W_j} \qquad\text{via }\mc U_\varrho(\eta_j)	\label{eq24}
\end{align}
are compatible with the identifications in Sec. \ref{lb6} if we set the $E_i$ in that section to be $(\Vbb^{\leq k})^\vee$. Choose any $w_\blt\in\Wbb_\blt$. Let $s_j=\sum_{n\in\Zbb}e_{j,n}\cdot z^n$ as in Sec. \ref{lb6} where each $e_{j,n}\in (\Vbb^{\leq k})^\vee\otimes_\Cbb\scr O(\mc B)$ is defined by
\begin{align*}
u\in\Vbb^{\leq k}\mapsto\upphi\big(w_1\otimes\cdots\otimes Y(u)_{-n-1}w_j\otimes\cdots\otimes w_N\big)\in\scr O(\mc B).
\end{align*}
For each $b\in\mc B$, since $\upphi|_b$ is a conformal block, it vanishes on $H^0(\mc C_b,\scr V_{\mc C_b}^{\leq k}\otimes\omega_{\mc C_b}(\star\SXb))\cdot w_\blt$. This means that $s_1,\dots,s_N$ satisfy condition (c) of Theorem \ref{lb7}. Hence, by that theorem, $s_1,\dots,s_N$ are series expansions of a unique element $s\in H^0(\mc C,(\scr V_{\fk X}^{\leq k})^\vee(\star\SX))$, which restricts to $s\in H^0(\mc C\setminus\SX,(\scr V_{\fk X}^{\leq k})^\vee)$ and hence defines an $\scr O_{\mc C\setminus\SX}$-module morphism $\scr V_{\fk X}^{\leq k}|_{\mc C\setminus\SX}\otimes_\Cbb w_\blt\rightarrow\scr O_{\mc C\setminus\SX}$. These morphisms are compatible for different $k$, and is extended $\scr O_{\mc C\setminus\SX}$-linearly to a morphism $\wr\upphi:\scr V_{\fk X}\otimes\pi^*\scr W_{\fk X}(\Wbb_\blt)\big|_{\mc C\setminus\SX}\rightarrow \scr O_{\mc C\setminus\SX}$ (recall \eqref{eq20}). 

By Prop. \ref{lb5}, we can regard $\wr\upphi$ as a morphism $\wr\upphi:\scr W_{\wr\fk X}(\Vbb\otimes\Wbb_\blt)\rightarrow\scr O_{\mc C\setminus\SX}$. Note that the identifications \eqref{eq20} and \eqref{eq24} are compatible with \eqref{eq25}, thanks to the commutative diagram \eqref{eq26}. Thus, $\wr\upphi$  satisfies \eqref{eq18} under the required identifications with respect to the local coordinates $\eta_\blt$. By the previous step, $\wr\upphi$  satisfies \eqref{eq18} for any other choice of local coordinates.
\end{proof}

\begin{proof}[Proof that $\wr\upphi$ is a conformal block]
Since being a conformal block is a fiberwise condition, we may prove $\wr\upphi$ is a conformal block by restricting it to each fiber $\fk X_b$ and its propagation $\wr(\fk X_b)$. Therefore, we may assume that $\mc B$ is a single point. So $C:=\mc C$ is a compact Riemann surface. We trim each $W_j$ so that $\eta_j(W_j)=\mc D_{r_j}$ for some $r_j>0$.

From the previous proof, we have a morphism $\wr\upphi:\scr W_{\wr\fk X}(\Vbb\otimes\Wbb_\blt)\rightarrow\scr O_{C\setminus\SX}$ which, given the trivializations in the statement of Theorem \ref{lb4}, is equal to \eqref{eq18} when restricted to $W_j\setminus\SX=W_j\setminus\{\sgm_j\}$. This shows that the series \eqref{eq18} converges a.l.u. on $0<|z|<r_j$. Therefore, as explained in Example \ref{lb9}, we can use Thm. \ref{lb8} to conclude that $\wr\upphi$ is a conformal block when restricted to each $W_j$. By Prop. \ref{lb10}, $\wr\upphi$ is globally a conformal block.
\end{proof}

The proof of Thm. \ref{lb4} is completed.

We now give an application of this theorem. Suppose $\Ebb$ is a set of vectors in a $\Vbb$-module $\Wbb$. We say \textbf{$\Ebb$ generates $\Wbb$} if $\Wbb$ is spanned by vectors of the form $Y(u_1)_{n_1}\cdots Y(u_k)_{n_k}w$ where $k\in\Zbb_+$, $u_1,\dots,u_k\in\Vbb$, $n_1,\dots,n_k\in\Zbb$, $w\in\Ebb$.

\begin{pp}\label{lb12}
Let $\fk X=(C;x_1,\dots,x_N)$ be an $N$-pointed connected compact Riemann surface, where $N\geq 2$. Choose local coordinate $\eta_j$ at $x_j$. Associate $\Vbb$-modules $\Wbb_1,\dots,\Wbb_N$ to $x_1,\dots,x_N$. Identify $\scr W_{\fk X}(\Wbb_\blt)=\Wbb_1\otimes\cdots\otimes\Wbb_N$ via $\mc U(\eta_\blt)$. Suppose that for each $2\leq i\leq N$, $\Ebb_i$ is a generating subset of $\Wbb_i$. Then any conformal block $\upphi:\Wbb_1\otimes\Wbb_2\otimes\cdots\Wbb_N\rightarrow\Cbb$ is determined by its values on $\Wbb_1\otimes\Ebb_2\otimes\cdots\otimes\Ebb_N$.
\end{pp}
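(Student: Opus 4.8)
The plan is to reduce the determination of $\upphi$ to an inductive use of propagation (Theorem \ref{lb4}) together with formula \eqref{eq54}/\eqref{eq18}, peeling off one generator at a time from each of the modules $\Wbb_2,\dots,\Wbb_N$. The key point is that $\upphi$, being determined by its values on the dense-in-a-suitable-sense span of products of vectors, can have the vectors in slots $2,\dots,N$ replaced by ``creation-operator words'' acting on the generating sets; and each such replacement is exactly the statement that applying a mode $Y(u)_n$ in slot $i$ is a value of a propagated conformal block.

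First I would observe that it suffices to treat one slot at a time: if we know $\upphi$ is determined by its values on $\Wbb_1\otimes\Ebb_2\otimes\Wbb_3\otimes\cdots\otimes\Wbb_N$, then by symmetry (relabelling the points) it is also determined by its values on $\Wbb_1\otimes\Ebb_2\otimes\Ebb_3\otimes\Wbb_4\otimes\cdots$, and so on; iterating over $i=2,\dots,N$ gives the claim. So fix $i\in\{2,\dots,N\}$ and suppose we are given the values $\upphi(w_1\otimes\cdots\otimes w_N)$ for all $w_j$ with $w_i$ ranging over $\Ebb_i$ only. We must recover $\upphi(w_1\otimes\cdots\otimes w_N)$ for arbitrary $w_i\in\Wbb_i$. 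Since $\Ebb_i$ generates $\Wbb_i$, it is enough to show that $\upphi(w_1\otimes\cdots\otimes Y(u_1)_{n_1}\cdots Y(u_k)_{n_k}w\otimes\cdots\otimes w_N)$ is determined by the given data, for every $u_1,\dots,u_k\in\Vbb$, $n_1,\dots,n_k\in\Zbb$, $w\in\Ebb_i$; I would do this by induction on $k$, the case $k=0$ being the hypothesis.

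For the inductive step, consider the conformal block $\upphi$ with $w_i$ already replaced by $w':=Y(u_2)_{n_2}\cdots Y(u_k)_{n_k}w$, whose values on all inputs are known by the inductive hypothesis. Choose a small coordinate disc $W_i\ni x_i$ with $\eta_i(W_i)=\mc D_{r_i}$ meeting no other marked point, and form the one-point propagation $\wr\upphi$ on $\wr\fk X$ (Theorem \ref{lb4}); restricting the new point to $U_i=W_i\setminus\{x_i\}$ and using the trivialization $\mc U(\triangle\eta_i,\wr\eta_\blt)$, equation \eqref{eq18} reads
\begin{align*}
\wr\upphi(u_1\otimes w_1\otimes\cdots\otimes w'\otimes\cdots\otimes w_N)=\upphi\big(w_1\otimes\cdots\otimes Y(u_1,z)w'\otimes\cdots\otimes w_N\big)=\sum_{n\in\Zbb}\upphi\big(w_1\otimes\cdots\otimes Y(u_1)_nw'\otimes\cdots\otimes w_N\big)z^{-n-1}.
\end{align*}
The left-hand side is a single holomorphic function on $U_i$ built from $\upphi$ (and $\wr$ is canonical), and $\wr\upphi$ is itself a conformal block associated to $\wr\fk X$ and $\Vbb\otimes\Wbb_\blt$; by the inductive hypothesis the values of $\upphi$ that enter its construction on $W_i\setminus\{x_i\}$ — namely the Laurent coefficients appearing above — are, collectively, known once we know $\wr\upphi$ there, and $\wr\upphi$ is determined by $\upphi$. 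Thus every coefficient $\upphi(w_1\otimes\cdots\otimes Y(u_1)_{n_1}w'\otimes\cdots\otimes w_N)$, being the coefficient of $z^{-n_1-1}$ in a function that $\upphi$ itself determines, is determined by $\upphi$; more precisely, it is determined by the values of $\upphi$ on inputs with $w'$ in slot $i$, which by induction reduce to the given data on $\Ebb_i$. Since the vectors $Y(u_1)_{n_1}w'$ span $\Wbb_i$ as $(u_1,n_1,w')$ vary (again using that $\Ebb_i$ generates), this completes the induction on $k$ and hence the proof.

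The main obstacle, and the one place to be careful, is the logical direction of the argument: Theorem \ref{lb4} tells us $\wr\upphi$ is \emph{built from} $\upphi$, so its values are automatically determined by $\upphi$; what we actually need is the converse-flavored statement that the particular values of $\upphi$ of the form $\upphi(\cdots\otimes Y(u)_n w'\otimes\cdots)$ are recoverable from the restricted data. The resolution is that these values are literally the Laurent coefficients of $\wr\upphi(u\otimes\cdots\otimes w'\otimes\cdots)$ on the punctured disc $U_i$, and $\wr\upphi$ is the canonical propagation of $\upphi$, so if $\upphi$ is already pinned down by the data on $\Wbb_1\otimes\cdots\otimes\Ebb_i\otimes\cdots$ (the inductive hypothesis on the lower-complexity vectors $w'$), then so is $\wr\upphi$, and hence so are those coefficients. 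One should also note that the propagated point's coordinate disc must avoid all $x_j$ (guaranteed by shrinking $W_i$) and that $N\ge 2$ is used so that slot $1$ is available to carry an honest module vector $w_1$ throughout; no further VOA hypotheses are needed since propagation is available in complete generality by Theorem \ref{lb4}.
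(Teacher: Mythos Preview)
There is a genuine gap in your inductive step, and it is precisely the circularity you flag in your last paragraph but do not actually resolve. You want to show that the values $\upphi(\cdots\otimes Y(u_1)_nw'\otimes\cdots)$ are determined by the data, and you observe that they are the Laurent coefficients of $\wr\upphi(u_1\otimes\cdots\otimes w'\otimes\cdots)$ on $U_i=W_i\setminus\{x_i\}$. You then say that $\wr\upphi$ is determined by $\upphi$, hence by the data. But the only way Theorem~\ref{lb4} hands you $\wr\upphi$ on $U_i$ is \emph{through} formula \eqref{eq18} with $j=i$, and the right-hand side there consists exactly of the length-$k$ values of $\upphi$ you are trying to determine. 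Your inductive hypothesis gives you $\upphi$ only on words of length $\leq k-1$ in slot $i$, not on all of $\Wbb_i$; so ``$\wr\upphi$ is determined by $\upphi$'' does not reduce to the inductive data when you evaluate near $x_i$. The induction does not advance.

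The missing idea, which the paper supplies, is to compute $\wr\upphi$ near the \emph{free} marked point $x_1$ instead. There \eqref{eq18} reads
\[
\wr\upphi(u\otimes w_\blt)=\upphi\big(Y(u,z)w_1\otimes w_2\otimes\cdots\otimes w_N\big),
\]
and since $Y(u)_nw_1\in\Wbb_1$ while the other slots sit in the given sets $\Ebb_j$, every Laurent coefficient already lies in the known data. This pins down $\wr\upphi$ (for those fixed inputs) on a punctured disc around $x_1$. One then uses that $C$ is \emph{connected} to analytically continue $\wr\upphi$ across all of $C\setminus\SX$, in particular to a punctured neighborhood of $x_i$, where the Laurent expansion finally yields the values $\upphi(\cdots\otimes Y(u)_nw_i\otimes\cdots)$. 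Your argument never invokes connectedness, and your closing remark that $N\geq 2$ merely lets slot $1$ ``carry an honest module vector'' misses its real role: slot $1$ is the anchor at which $\wr\upphi$ can be computed from the restricted data before being transported across the surface.
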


\begin{proof}
Assume $\upphi$ vanishes on $\Wbb_1\otimes\Ebb_2\otimes\cdots\otimes\Ebb_N$. We shall show that $\upphi$ vanishes on $\Wbb_1\otimes Y(u)_n\Ebb_2\otimes\cdots\otimes\Ebb_N$ for each $u\in\Vbb,n\in\Zbb$. Then, by successively applying this result, we see that $\upphi$ vanishes on $\Wbb_1\otimes\Wbb_2\otimes\Ebb_3\otimes\cdots\otimes\Ebb_N$, and hence (by repeating again this procedure several times) vanishes on $\Wbb_1\otimes\Wbb_2\otimes\cdots\otimes\Wbb_N$.

Identify $\scr W_{\wr\fk X}(\Vbb\otimes\Wbb_\blt)=\scr V_{\fk X}\big|_{C\setminus\SX}\otimes_\Cbb\Wbb_\blt$ using \eqref{eq32}. Then we can consider $\wr\upphi$ as a morphism $\wr\upphi:\scr V_{\fk X}\big|_{C\setminus\SX}\otimes_\Cbb\Wbb_\blt\rightarrow\scr O_{C\setminus\SX}$. Let $\Omega$ be the open set of all $x\in C\setminus\SX$ such that $x$ has a neighborhood $U\subset C\setminus\SX$ such that the restriction
\begin{align*}
\wr\upphi|_U:\scr V_{\fk X}|_U\otimes_\Cbb\Wbb_1\otimes\Ebb_2\otimes\cdots\otimes\Ebb_N\rightarrow\scr O_U	
\end{align*}
vanishes. We note that if $U$ is connected, and if we can find an injective $\eta\in\scr O(U)$ (so that $\scr V_{\fk X}|_U$ is trivialized to $\Vbb\otimes_\Cbb\scr O_U$), then by complex analysis, $\wr\upphi|_U$ vanishes whenever $\wr\upphi|_V$ vanishes for some non-empty open $V\subset U$. We conclude that if such $U$ intersects $\Omega$, then $U$ must be inside $\Omega$. So $\Omega$ is closed. It is clear that for each $w_1\in\Wbb_1,w_2\in\Ebb_2,\dots,w_N\in\Ebb_N$, the following formal series of $z$
\begin{align*}
\upphi(Y(u,z)w_1\otimes w_2\otimes\cdots\otimes w_N)	
\end{align*}
vanishes. Thus, by Thm. \ref{lb4}, $\Omega$ contains $W_0\setminus\{x_0\}$ for some neighborhood $W_0$ of $x_0$. Therefore $\Omega=C\setminus\SX$. By Thm. \ref{lb4} again, we see
\begin{align*}
\upphi(w_1\otimes Y(u,z)w_2\otimes\cdots\otimes w_N)	
\end{align*}
also vanishes. This finishes the proof.
\end{proof}

\begin{rem}\label{lb14}
Since $\id$ generates $\Vbb$, we see that if $\Vbb,\Wbb_2,\dots,\Wbb_N$ (where $N\geq 2$) are associated to a connected $\fk X=(C;x_1,\dots,x_N)$, then any conformal block $\upphi:\Vbb\otimes\Wbb_2\otimes\cdots\otimes\Wbb_N\rightarrow\Cbb$ is determined by its values on $\id\otimes\Wbb_2\otimes\cdots\otimes\Wbb_N$. This proves the following two well-known results. In fact, in the literature, the propagation of conformal blocks is best known in the form of the following two corollaries.
\end{rem}

\begin{co}\label{lb13}
Let $\fk X=(C;x_1,\dots,x_N)$ be an $N$-pointed compact Riemann surface associated with $\Vbb$-module $\Wbb_1,\dots,\Wbb_N$. Identify $\scr W_{\wr\fk X}(\Vbb\otimes\Wbb_\blt)=\scr V_{\fk X}\big|_{\mc C\setminus\SX}\otimes_\Cbb\scr W_{\fk X}(\Wbb_\blt)$ via \eqref{eq32}. Then for each $x\in\mc C\setminus\SX$, $\wr\upphi|_x$ is the unique linear map $\scr V_{\fk X}\big|_x\otimes_\Cbb\scr W_{\fk X}(\Wbb_\blt)\rightarrow\Cbb$ which is a conformal block and satisfies
\begin{align*}
\wr\upphi|_x(\id\otimes w)=\upphi(w)	
\end{align*}
for each vector $w\in\scr W_{\fk X}(\Wbb_\blt)$.
\end{co}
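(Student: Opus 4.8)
The plan is to deduce the statement from Theorem \ref{lb4} together with Proposition \ref{lb12}, the two substantive inputs; the corollary itself is then a matter of unwinding definitions. First I would record that the fiber of $\wr\fk X$ over a point $x\in\mc C\setminus\SX$ is the $(N+1)$-pointed surface $(C;x,x_1,\dots,x_N)$, with $\Vbb$ attached to $x$ and $\Wbb_j$ to $x_j$, and that under the identification \eqref{eq32} and the canonical isomorphism \eqref{eq6} the fiber $\scr W_{\wr\fk X}(\Vbb\otimes\Wbb_\blt)|_x$ is precisely $\scr V_{\fk X}|_x\otimes_\Cbb\scr W_{\fk X}(\Wbb_\blt)$ (note $\pi^*\scr W_{\fk X}(\Wbb_\blt)|_x=\scr W_{\fk X}(\Wbb_\blt)$, since here $\mc B$ is a single point). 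By Theorem \ref{lb4}, $\wr\upphi$ is a conformal block associated to $\wr\fk X$, and since being a conformal block is by definition a fiberwise condition, $\wr\upphi|_x$ is a conformal block on $(C;x,x_1,\dots,x_N)$. So only the normalization $\wr\upphi|_x(\id\otimes w)=\upphi(w)$ and uniqueness remain.

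For the normalization I would take $u=\id$ in the defining formula \eqref{eq18}. The vacuum vector satisfies $Y(\id,z)w_j=w_j$, and the vacuum section of $\scr V_{\fk X}$ is carried to the constant section $\id$ by every trivialization $\mc U_\varrho(\cdot)$ (Definition \ref{lb11}), hence --- via the commutative diagram \eqref{eq26} --- also by $\mc U(\triangle\eta_j,\wr\eta_\blt)$. Thus \eqref{eq18} with $u=\id$ shows that the holomorphic function $\wr\upphi(\id\otimes w)$, restricted to each $U_j=W_j\setminus\SX$, equals the constant $\upphi(w)$. Since $\wr\upphi(\id\otimes w)$ is a global holomorphic function on $\mc C\setminus\SX$, and every connected component of $\mc C\setminus\SX$ is connected and meets at least one $U_j$ (every connected component of $C$ carries a marked point), the identity theorem forces $\wr\upphi(\id\otimes w)\equiv\upphi(w)$ on all of $\mc C\setminus\SX$; in particular $\wr\upphi|_x(\id\otimes w)=\upphi(w)$.

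For uniqueness, let $\upphi'$ be any conformal block on $(C;x,x_1,\dots,x_N)$ with $\upphi'(\id\otimes w)=\upphi(w)$. Then $\upphi'-\wr\upphi|_x$ is a conformal block vanishing on $\id\otimes\Wbb_\blt$, and I must show it is $0$. When $C$ is connected this is immediate from Proposition \ref{lb12}: reorder the $N+1$ marked points so that $x$ is \emph{not} in the first slot (possible since $N\geq1$), take the generating subset of the $\Vbb$-slot to be $\{\id\}$ (the vacuum generates $\Vbb$, cf. Remark \ref{lb14}), and take $\Ebb_i=\Wbb_i$ in the remaining non-first slots. For disconnected $C$ the same argument is applied on the connected component $C'$ containing $x$, which in $(C;x,x_1,\dots,x_N)$ carries $x$ together with at least one $x_j$: a conformal block restricts compatibly to the coinvariants attached to each component, the image of $\id\otimes\Wbb_\blt$ already spans the coinvariants attached to $C'$ by Proposition \ref{lb12} and spans those of the other components trivially, so $\upphi'-\wr\upphi|_x$ vanishes. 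There is no serious obstacle in this argument --- the result is a formal consequence of Theorem \ref{lb4} and Proposition \ref{lb12} --- the only points requiring care being the compatibility of the vacuum section with the trivialization $\mc U(\triangle\eta_j,\wr\eta_\blt)$ in step two and the componentwise bookkeeping for disconnected $C$ in step three.
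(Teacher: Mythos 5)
Your proof is correct and follows essentially the same route as the paper: the normalization comes from \eqref{eq18} with $u=\id$ plus the identity theorem on each component of $C\setminus\SX$, and the uniqueness comes from Proposition \ref{lb12} applied with $\{\id\}$ as the generating set of the $\Vbb$-slot (the paper packages this as Remark \ref{lb14}). Your explicit componentwise bookkeeping for disconnected $C$ is slightly more careful than the paper's one-line appeal to that remark, but it is the same argument.
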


\begin{proof}
The uniqueness follows from the previous remark. We shall show that $\wr\upphi(\id\otimes w)$, which is an element of $\scr O(C\setminus\SX)$, equals the constant function $\upphi(w)$. By complex analysis, it suffices to prove $\wr\upphi(\id\otimes w)=\upphi(w)$ when restricted to each $W_j\setminus\{x_j\}$ (where $W_j$ is a small disc containing $x_j$ on which a local coordinate is defined). This is true by \eqref{eq18}.
\end{proof}

\begin{co}\label{lb25}
Let $\fk X=(C;x_1,\dots,x_N)$ be an $N$-pointed connected compact Riemann surface associated with $\Vbb$-module $\Wbb_1,\dots,\Wbb_N$. Choose $x\in C\setminus\{x_1,\dots,x_N\}$. Then the space of conformal blocks associated to $\fk X$ and $\Wbb_\blt$ is isomorphic to the space of conformal blocks associated to $(\wr\fk X)_x=(C;x,x_1,\dots,x_N)$ and $\Vbb,\Wbb_1,\dots,\Wbb_N$.
\end{co}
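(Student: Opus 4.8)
The plan is to show that \emph{propagation is the desired isomorphism}, with inverse given by \emph{restriction to the vacuum slot}. To a conformal block $\upphi$ associated to $\fk X$ and $\Wbb_\blt$ we assign $\wr\upphi|_x$, the restriction to the fiber over $x$ of the propagated conformal block $\wr\upphi$ of Thm.~\ref{lb4}. Since $\wr\upphi$ is a conformal block associated to $\wr\fk X$ and $\Vbb\otimes\Wbb_\blt$ (Thm.~\ref{lb4}), its fiber $\wr\upphi|_x$ is a conformal block associated to $(\wr\fk X)_x=(C;x,x_1,\dots,x_N)$ and $\Vbb,\Wbb_1,\dots,\Wbb_N$. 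This assignment is clearly linear, and it is injective by Cor.~\ref{lb13}: if $\wr\upphi|_x=0$ then $\upphi(w)=\wr\upphi|_x(\id\otimes w)=0$ for every $w$, so $\upphi=0$.

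For surjectivity I would produce the inverse map directly: given a conformal block $\uppsi$ associated to $(\wr\fk X)_x$ and $\Vbb,\Wbb_1,\dots,\Wbb_N$, identify $\scr W_{(\wr\fk X)_x}(\Vbb\otimes\Wbb_\blt)\simeq\scr V_{\fk X}|_x\otimes_\Cbb\scr W_{\fk X}(\Wbb_\blt)$ via Prop.~\ref{lb5}/\eqref{eq32}, and set $\upphi(w)=\uppsi(\id\otimes w)$ for $w\in\scr W_{\fk X}(\Wbb_\blt)$, where $\id$ is the vacuum section. The one substantive point — and the main obstacle of the whole argument — is to check that this $\upphi$ is a conformal block associated to $\fk X$, i.e.\ that it annihilates $H^0(C,\scr V_C\otimes\omega_C(\star\SX))\cdot\scr W_{\fk X}(\Wbb_\blt)$.

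To see this, fix $v\in H^0(C,\scr V_C\otimes\omega_C(\star\SX))$ and regard it also as a testing $1$-form for $(\wr\fk X)_x$ (poles are now additionally allowed at $x$, but $v$ has none there). The action of $v$ on $\scr W_{(\wr\fk X)_x}(\Vbb\otimes\Wbb_\blt)$ is a sum of local residue terms over the $N+1$ marked points $x,x_1,\dots,x_N$. At each $x_j$ the term only touches the $\Wbb_j$-slot and coincides with the corresponding term of the action of $v$ on $\scr W_{\fk X}(\Wbb_\blt)$; the extra term at $x$ is $\Res_{z=0}Y(\mc V_\varrho(\mu)v,z)\id$ for a local coordinate $\mu$ at $x$, and since $v$ is holomorphic at $x$ the section $\mc V_\varrho(\mu)v$ is a $\Vbb$-valued \emph{holomorphic} $1$-form near $x$, while $Y(u,z)\id\in\Vbb[[z]]$ has no negative powers of $z$; hence this residue vanishes. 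Therefore $v\cdot(\id\otimes w)=\id\otimes(v\cdot w)$, and $\upphi(v\cdot w)=\uppsi(\id\otimes(v\cdot w))=\uppsi\big(v\cdot(\id\otimes w)\big)=0$ because $\uppsi$ is a conformal block. So $\upphi\in\mathrm{CB}(\fk X)$.

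It remains to note that the two assignments are mutually inverse. Starting from $\upphi$ and forming $\wr\upphi|_x$, Cor.~\ref{lb13} gives $\wr\upphi|_x(\id\otimes w)=\upphi(w)$, so restriction to the vacuum slot recovers $\upphi$. Conversely, starting from $\uppsi$ and forming $\upphi(w)=\uppsi(\id\otimes w)$, both $\wr\upphi|_x$ and $\uppsi$ are conformal blocks associated to $(\wr\fk X)_x$ and $\Vbb,\Wbb_1,\dots,\Wbb_N$ that agree on every $\id\otimes w$; since $C$ is connected and $\id$ generates $\Vbb$, Rem.~\ref{lb14} forces $\wr\upphi|_x=\uppsi$. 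Thus propagation $\upphi\mapsto\wr\upphi|_x$ is a linear isomorphism of spaces of conformal blocks, which is the assertion. Everything here is cited machinery or formal bookkeeping except the vanishing of the residue at $x$, i.e.\ the elementary fact that $1$-forms regular at the propagated point act trivially on the vacuum section; that is where I expect the only real care to be needed.
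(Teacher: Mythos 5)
Your proposal is correct and takes essentially the same route as the paper: propagation $\upphi\mapsto\wr\upphi|_x$ and restriction to the vacuum slot $\uppsi\mapsto\uppsi(\id\otimes\cdot)$ as mutually inverse maps, with $G\circ F=1$ from Cor.~\ref{lb13} and injectivity of $G$ from Rem.~\ref{lb14}. The only difference is that you explicitly verify (via the vanishing residue at $x$, using that $Y(u,z)\id\in\Vbb[[z]]$) that $\uppsi(\id\otimes\cdot)$ is again a conformal block for $\fk X$ — a point the paper's proof leaves implicit but which is indeed needed for the argument.
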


\begin{proof}
We assume the identifications in Cor. \ref{lb13}. The linear map $F$ from the first space to the second one is defined by $\upphi\mapsto\wr\upphi|_x$. The linear map $G$ from the second one to the first one is defined by $\uppsi\mapsto\uppsi(\id\otimes\cdot)$. By Cor. \ref{lb13}, we have $G\circ F=1$. By Remark \ref{lb14}, $G$ is injective. So $G$ is bijective.
\end{proof}

\section{Multi-propagation}\label{lb20}

Let $\fk X=(C;x_1,\dots,x_N)$ be an $N$-pointed compact Riemann surface. Recall $\SX=\{x_1,\dots,x_N\}$. We choose local coordinates $\eta_1\in\scr O(W_1),\dots,\eta_N\in\scr O(W_N)$ of $\fk X$ at $x_1,\dots,x_N$, where each $W_j$ is a neighborhood of $x_j$ satisfying $W_j\cap\SX=\{x_j\}$.

Let $n\in\Zbb_+$. By Section \ref{lb2}, $\wr^n\fk X$ is 
\begin{align*}
\wr^n\fk X=(\wr^n\pi:C\times\Conf^n(C\setminus\SX)\rightarrow\Conf^n(C\setminus\SX);\sigma_1,\dots,\sigma_n,\wr^nx_1,\dots,\wr^nx_N) 
\end{align*}
where $\wr^n\pi$ is the projection onto the second component, and the sections are given by
\begin{gather*}
\wr^nx_j(y_1,\dots,y_n)=(x_j,y_1,\dots,y_n),	\\
\sigma_i(y_1,\dots,y_n)=(y_i,y_1,\dots,y_n).	
\end{gather*}
We define local coordinate \index{zz@$\wr\eta_j$, $\wr^n\eta_j$}
\begin{align}
	\wr^n\eta_j(x,y_1,\dots,y_n)=\eta_j(x)
\end{align}
of $\wr^n\fk X$ at $x_j\times\Conf^n(C\setminus\SX)$, defined on $W_j\times\Conf^n(C\setminus\SX)$. Suppose $U$ is an open subset of $C\setminus\SX$ which admits an injective $\mu\in\scr O(U)$. Then a local coordinate $\triangle_i\mu$ of $(\wr^n\fk X)_U$ at $\sigma_i(U)$ is defined by \index{zz@$\triangle\mu$, $\triangle_i\mu$}
\begin{align}
\triangle_i\mu(x,y_1,\dots,y_n)=\mu(x)-\mu(y_i)	
\end{align}
whenever this expression is definable.

We shall relate the $\scr W$-sheaves with the exterior product $\scr V_C^{\boxtimes n}$, which is an $\scr O_{C^n}$-module defined by
\begin{align}
\scr V_C^{\boxtimes n}:=	\pr_1^*\scr V_C\otimes\pr_2^*\scr V_C\otimes\cdots\otimes\pr_n^*\scr V_C.
\end{align}
Here, each $\pr_i:C^n=\underbrace{C\times\cdots\times C}_n\rightarrow C$ is the projection onto the $i$-th component. The tensor products are over $\scr O_{C^n}$ as usual. Similar to the description in Section \ref{lb15}, the $\scr O_{C^n}$-module $\pr_i^*\scr V_C$ is the pullback of the (infinite-rank) vector bundle $\scr V_C$ along $\pr_i$ to $C^n$, i.e., $\scr V_C\otimes_{\scr O_{C}}\scr O_{C^n}$ where the action of $f\in\scr O_C$ on $\scr O_{C^n}$ is defined by the multiplication of $f\circ\pr_i$. If $U\subset C$ is open and $\mu\in\scr O(U)$ is injective, we then have a trivilization
\begin{align*}
\pr_i^*\mc U_\varrho(\mu):\pr_i^*\scr V_C\big|_{\pr_i^{-1}(U)}\xrightarrow{\simeq}\Vbb\otimes_\Cbb\scr O_{\pr_i^{-1}(U)}.	
\end{align*}

\begin{pp}\label{lb16}
We have a unique isomorphism
\begin{align}
\scr W_{\wr^n\fk X}(\Vbb^{\otimes n}\otimes\Wbb_\blt)\xrightarrow{\simeq}\scr V_C^{\boxtimes n}\big|_{\Conf^n(C\setminus\SX)}\otimes_\Cbb \scr W_{\fk X}(\Wbb_\blt)	
\end{align}
such that for any $n$ mutually disjoint open subsets $U_1,\dots,U_n\subset C\setminus\SX$ and any injective $\mu_1\in\scr O(U_1),\dots,\mu_n\in\scr O(U_n)$, the restriction of this isomorphism to $U$ makes the following diagram commutes.
\begin{equation}
\begin{tikzcd}[column sep=tiny]
\scr W_{\wr^n\fk X}(\Vbb^{\otimes n}\otimes\Wbb_\blt)\big|_{U_1\times\cdots\times U_n} \arrow[rr,"\simeq"] \arrow[ddr, "\simeq" ,"{\mc U(\triangle_\blt\mu_\blt,\wr^n\eta_\blt)}"']  && \scr V_C^{\boxtimes n}\big|_{U_1\times\cdots\times U_n}\otimes_\Cbb\scr W_{\fk X}(\Wbb_\blt)	 \arrow[ddl,"\simeq"',"{\pr_1^*\mc U_\varrho(\mu_1)\otimes\cdots\otimes\pr_n^*\mc U_\varrho(\mu_n)\otimes \mc U(\eta_\blt)}"]\\
{}\\
& \Vbb^{\otimes n}\otimes\Wbb_\blt\otimes_\Cbb \scr O_{U_1\times\cdots\times U_n}
\end{tikzcd}\label{eq33}	
\end{equation}
Here,
\begin{gather*}
(\triangle_\blt\mu_\blt,\wr^n\eta_\blt):=(\triangle_1\mu_1,\dots,\triangle_n\mu_n,\wr^n\eta_1,\dots,\wr^n\eta_n).	
\end{gather*}
Moreover, the isomorphism is independent of the choice of $\eta_\blt$.
\end{pp}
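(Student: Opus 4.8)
The statement is the $n$-point analogue of Proposition~\ref{lb5} (which is the case $n=1$), and I would prove it by copying that proof almost verbatim. Define the claimed isomorphism locally by declaring the diagram \eqref{eq33} to commute; since both slanted legs are isomorphisms onto $\Vbb^{\otimes n}\otimes\Wbb_\blt\otimes_\Cbb\scr O_{U_1\times\cdots\times U_n}$, this determines the map uniquely, and it remains to check that the local definitions glue, i.e.\ that the transition functions of $\scr W_{\wr^n\fk X}(\Vbb^{\otimes n}\otimes\Wbb_\blt)$ and of $\scr V_C^{\boxtimes n}|_{\Conf^n(C\setminus\SX)}\otimes_\Cbb\scr W_{\fk X}(\Wbb_\blt)$ agree under the prescribed trivializations. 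First one checks the indexing matches: in $\scr W_{\wr^n\fk X}(\Vbb^{\otimes n}\otimes\Wbb_\blt)$ the $i$-th copy of $\Vbb$ is attached at $\sigma_i$, while on the right the $i$-th tensor factor of $\scr V_C^{\boxtimes n}$ is $\pr_i^*\scr V_C$, and $\pr_i$ restricted to $\Conf^n(C\setminus\SX)$ sends $(y_1,\dots,y_n)$ to $y_i$, which is exactly the marked point $\sigma_i(y_1,\dots,y_n)$ inside the fiber $\cong C$; so the two orderings of tensor factors coincide.

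\textbf{The multi-point coordinate-change lemma.} The only genuinely new ingredient is the $n$-point version of Lemma~\ref{lb3}: given a second set of trivializing data, injective $\mu_i'\in\scr O(U_i')$ at the $\sigma_i$'s and coordinates $\eta_j'$ at $x_j$, then for every $\vec y=(y_1,\dots,y_n)$ in the relevant overlap,
\[
(\triangle_i\mu_i|\triangle_i\mu_i')_{\vec y}=\varrho(\mu_i|\mu_i')_{y_i},\qquad (\wr^n\eta_j|\wr^n\eta_j')_{\vec y}=(\eta_j|\eta_j').
\]
I would prove this exactly as Lemma~\ref{lb3}: the fiber $(\wr^n\mc C)_{\vec y}$ is canonically $C$ via $(x,y_1,\dots,y_n)\mapsto x$; under this identification \eqref{eq9} gives $\triangle_i\mu_i|_{(\wr^n\mc C)_{\vec y}}=(\mu_i-\mu_i(y_i))|_C$ while $\wr^n\eta_j|_{(\wr^n\mc C)_{\vec y}}=\eta_j|_C$ by definition; now compare with the defining relations \eqref{eq10} and \eqref{eq8}. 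In particular $(\wr^n\eta_j|\wr^n\eta_j')$ does not depend on $\vec y$, reflecting that the points $x_j$ stay fixed under propagation.

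\textbf{Comparison of transition functions.} With the lemma in hand this is routine. By \eqref{eq11}, the transition from $\mc U(\triangle_\blt\mu_\blt',\wr^n\eta_\blt')$ to $\mc U(\triangle_\blt\mu_\blt,\wr^n\eta_\blt)$ is $\bigotimes_{i=1}^n\mc U(\triangle_i\mu_i|\triangle_i\mu_i')\otimes\bigotimes_{j=1}^N\mc U(\wr^n\eta_j|\wr^n\eta_j')$, which at $\vec y$ equals $\bigotimes_i\mc U(\varrho(\mu_i|\mu_i')_{y_i})\otimes\bigotimes_j\mc U((\eta_j|\eta_j'))$ by the lemma. On the other side, by the definition of $\scr V_C^{\boxtimes n}$ as an exterior tensor product, the trivialization $\pr_i^*\mc U_\varrho(\mu_i)$ of $\pr_i^*\scr V_C$ has transition function $\pr_i^*\mc U(\varrho(\mu_i|\mu_i'))$ (cf.\ \eqref{eq12}), whose value at $\vec y$ is $\mc U(\varrho(\mu_i|\mu_i')_{y_i})$; combined with the transition $\bigotimes_j\mc U((\eta_j|\eta_j'))$ of $\scr W_{\fk X}(\Wbb_\blt)$ between the $\mc U(\eta_\blt)$'s, the transition of $\pr_1^*\mc U_\varrho(\mu_1)\otimes\cdots\otimes\pr_n^*\mc U_\varrho(\mu_n)\otimes\mc U(\eta_\blt)$ at $\vec y$ is again $\bigotimes_i\mc U(\varrho(\mu_i|\mu_i')_{y_i})\otimes\bigotimes_j\mc U((\eta_j|\eta_j'))$. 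The two agree, so the local maps glue to the desired global isomorphism.

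\textbf{Remarks on difficulty.} I do not anticipate a real obstacle: the content is purely the bookkeeping above, and the only point requiring care is keeping the labelling of the $n$ propagated points consistent across the isomorphism while proving the multi-point analogue of Lemma~\ref{lb3}. An alternative is induction on $n$ using $\wr^n\fk X=\wr(\wr^{n-1}\fk X)$ and applying Proposition~\ref{lb5} to the family $\wr^{n-1}\fk X$ over $\Conf^{n-1}(C\setminus\SX)$; this route needs the auxiliary identification $\scr V_{\wr^{n-1}\fk X}\simeq\pr_1^*\scr V_C$ (the VOA sheaf of a propagated family, all of whose fibers are copies of $C$, being the pullback of $\scr V_C$ along the tautological map to $C$), after which \eqref{eq26} gives the claim — but the direct check above is cleaner to write down.
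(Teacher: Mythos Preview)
Your proposal is correct and follows essentially the same approach as the paper: the paper's proof is a two-sentence sketch that records the key identity $(\triangle_i\mu_i|\triangle_i\mu_i')_{(y_1,\dots,y_n)}=\varrho(\mu_i|\mu_i')_{y_i}$ (proved ``similar to the proof of Lemma~\ref{lb3}'') and then says the transition functions agree ``as in the proof of Prop.~\ref{lb5}'', which is exactly what you spell out. Your treatment is simply a more detailed version of the same argument; the only minor difference is that you also vary the $\eta_j$'s, which is harmless but unnecessary here since in Section~\ref{lb20} the base is a point and the $\eta_\blt$ are fixed once and for all.
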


\begin{proof}
Suppose we have another injective $\mu_i'\in\scr O(U_i)$. Similar to the proof of Lemma \ref{lb3}, we see that for each $y_i\in U_i$,
\begin{align*}
(\triangle_i\mu_i|\triangle_i\mu_i')_{(y_1,\dots,y_n)}=\varrho(\mu_i|\mu_i')_{y_i}.	
\end{align*}
(See  \eqref{eq10} and \eqref{eq8} for the meaning of notations.) Using this relation, one shows, as in the proof of Prop. \ref{lb5}, that the transition functions for the two trivializations in \eqref{eq33} are equal. This finishes the proof.
\end{proof}

Choose a conformal block $\upphi:\Wbb_\blt\rightarrow\Cbb$ associated to $\fk X$ and $\scr W_{\fk X}(\Wbb_\blt)$. By Theorem \ref{lb4}, we have $n$-propagation $\wr^n\upphi$ defined inductively by
\begin{align*}
\wr^n\upphi=\wr(\wr^{n-1}\upphi)
\end{align*}
which is a conformal block associated to $\wr^n\fk X$ and $\Vbb^{\otimes n}\otimes\Wbb_\blt$. By Prop. \ref{lb16}, we can regard $\wr^n\upphi$ as a morphism
\begin{align*}
\wr^n\upphi:\scr V_C^{\boxtimes n}\big|_{\Conf^n(C\setminus\SX)}\otimes_\Cbb\scr W_{\fk X}(\Wbb_\blt)	\rightarrow\scr O_{\Conf^n(C\setminus\SX)}.	
\end{align*}

\subsection*{Important facts about $\wr^n\upphi$}

Choose (non-necessarily disjoint) open $U_1,\dots,U_n\subset C$ and write \index{Conf@$\Conf(U_\blt\setminus\SX)$}
\begin{align*}
\Conf(U_\blt\setminus\SX)=(U_1\times\cdots\times U_n)\cap\Conf^n(C\setminus\SX).	
\end{align*}
For any sections $v_i\in\scr V_C(U_i)$ and any $w\in\scr W_{\fk X}(\Wbb_\blt)$, we write \index{zz@$\wr^n\upphi$}
\begin{align}
\wr^n\upphi(v_1,\dots,v_n,w):=\wr^n\upphi\Big(\pr_1^*v_1\otimes\cdots\otimes\pr_n^*v_n\otimes w\big|_{\Conf(U_\blt\setminus\SX)}\Big)&\nonumber\\[1ex]
\in\scr O\big(\Conf(U_\blt\setminus\SX)\big).&
\end{align}
We now summarize some important properties of $\wr^n\upphi$ in this setting.

As an elementary fact, the map $(v_1,\dots,v_n)\mapsto\wr^n\upphi(v_1,\dots,v_n,w)$ intertwines the action of each $\scr O(U_i)$ on the $i$-th component. (Here, each $f\in\scr O(U_i)$ acts on $\scr O(\Conf(U_\blt\setminus\SX))$ by the multiplication of $(f\circ\pr_i)|_{\Conf(U_\blt\setminus\SX)}$). Moreover, it is compatible with restricting to open subsets of $U_i$.

We set $\wr^0\upphi=\upphi$.

\begin{thm}\label{lb17}
Identify
\begin{align*}
\scr W_{\fk X}(\Wbb_\blt)=\Wbb_\blt	\qquad\text{via }\mc U(\eta_\blt).
\end{align*}
Choose any $w_\blt\in\Wbb_\blt$. For each $1\leq i\leq n$, choose an open subset  $U_i$  of $C$ equipped with an injective $\mu_i\in\scr O(U_i)$. Identify 
\begin{align*}
\scr V_C\big|_{U_i}=\Vbb\otimes_\Cbb\scr O_{U_i}\qquad\text{via }\mc U_\varrho(\mu_i).
\end{align*}
Choose $v_i\in\scr V_C(U_i)=\Vbb\otimes_\Cbb\scr O(U_i)$. Choose $(y_1,\dots,y_n)\in\Conf(U_\blt\setminus\SX)$. Then the following are true.
\begin{enumerate}[label=(\arabic*)]
\item If $U_1=W_j$ (where $1\leq j\leq N$) and contains only $y_1,x_j$ among all $x_\blt,y_\blt$, if $\mu_1=\eta_j$, and if $U_1$ contains the closed disc with center $x_j$ and radius $|\eta_j(y_1)|$ (under the coordinate $\eta_j$), then
\begin{align}
&\wr^n\upphi(v_1,v_2,\dots,v_n,w_\blt)\big|_{y_1,y_2,\dots,y_n}\nonumber\\
=&\wr^{n-1}\upphi\big(v_2,\dots,v_n,w_1\otimes\cdots\otimes Y(v_1,z)w_j\otimes\cdots\otimes w_N\big)\big|_{y_2,\dots,y_n}~\big|_{z=\eta_j(y_1)}\label{eq35}
\end{align}
where the series of $z$ on the right hand side converges absolutely, and $v_1$ is considered as an element of $\Vbb\otimes\Cbb((z))$ by taking Taylor series expansion with respect to the variable $\eta_j$ at $x_j$.
\item If $U_1=U_2$ and contains only $y_1,y_2$ among all $x_\blt,y_\blt$, if $\mu_1=\mu_2$, and if $U_2$ contains the closed disc with center $y_2$ and radius $|\mu_2(y_1)-\mu_2(y_2)|$ (under the coordinate $\mu_2$), then
\begin{align}
&\wr^n\upphi(v_1,v_2,v_3,\dots,v_n,w_\blt)\big|_{y_1,y_2,\dots,y_n}\nonumber\\
=&\wr^{n-1}\upphi\big(Y(v_1,z)v_2,v_3,\dots,v_n,w_\blt\big)\big |_{y_2,\dots,y_n}~\big |_{z=\mu_2(y_1)-\mu_2(y_2)}\label{eq36}
\end{align}
where the series of $z$ on the right hand side converges absolutely, and $v_1$ is considered as an element of $\Vbb\otimes\Cbb((z))$ by taking Taylor series expansion with respect to the variable $\mu_2-\mu_2(y_2)$ at $y_2$.
\item  We have
\begin{align}
\wr^n\upphi(\id,v_2,v_3,\dots,v_n,w_\blt)=\wr^{n-1}\upphi(v_2,\dots,v_n,w_\blt).
\end{align}
\item For any permutation $\uppi$ of the set $\{1,2,\dots,n\}$, we have
\begin{align}
\wr^n\upphi(v_{\uppi(1)},\dots,v_{\uppi(n)},w_\blt)\big|_{y_{\uppi(1)},\dots,y_{\uppi(n)}}=\wr^n\upphi(v_1,\dots,v_n,w_\blt)\big|_{y_1,\dots,y_n}.
\end{align}
\end{enumerate}
\end{thm}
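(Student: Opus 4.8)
The plan is to derive all four assertions from the single-propagation results of Section \ref{lb22}, using the defining relation $\wr^n\fk X=\wr(\wr^{n-1}\fk X)$ (so that $\wr^n\upphi=\wr(\wr^{n-1}\upphi)$) together with the fact, implicit in the proof of Theorem \ref{lb4}, that the propagation of a conformal block over a family restricts on each fibre to the propagation of the conformal block on that fibre. The first step is a purely bookkeeping one, carried out in the style of Lemma \ref{lb3} and Proposition \ref{lb16}: under the identification $\wr^n\fk X\simeq\wr(\wr^{n-1}\fk X)$, on the fibre of $\wr^{n-1}\fk X$ over a configuration $(y_2,\dots,y_n)$ the local coordinate $\wr^{n-1}\eta_j$ at the marked point $x_j$ restricts to $\eta_j$ near $x_j$, while the local coordinate $\triangle_1\mu_2$ at the first propagated marked point $\sigma_1$ (which is the point $y_2$) restricts to $\mu_2-\mu_2(y_2)$ near $y_2$; these are compatible with the coordinates $\wr^n\eta_j,\triangle_i\mu_i$ of Section \ref{lb20} and Proposition \ref{lb16}, and the tensor slot and section variable attached to the outermost $\wr$ are the first ones, i.e.\ the pair $(v_1,y_1)$. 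Granting this, each of (1)--(4) is an instance of a Section \ref{lb22} statement applied to $\wr^{n-1}\upphi$ on the family $\wr^{n-1}\fk X$.

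For (1) I would apply identity \eqref{eq18} to $\uppsi=\wr^{n-1}\upphi$ at the marked point $x_j$. Because $\mu_1=\eta_j$ and $U_1=W_j$ meets only $y_1,x_j$, the trivialisations of Theorem \ref{lb4} coincide with those in the present statement, and \eqref{eq18} reads, as an identity of formal Laurent series in $z$,
\begin{align*}
\wr^n\upphi(v_1,v_2,\dots,v_n,w_\blt)=\wr^{n-1}\upphi\big(v_2,\dots,v_n,w_1\otimes\cdots\otimes Y(v_1,z)w_j\otimes\cdots\otimes w_N\big).
\end{align*}
Since a section of $\scr V_C$ over $U_1$ lies in $\scr V_C^{\leq k}$ for some $k$ (a finite sum of $\scr O(U_1)$-multiples of fixed vectors), $\scr O_{U_1}$-linearity of $\wr^n\upphi$ in the first slot reduces this to the fixed-vector case, so the right side is a genuine Laurent series, equal to the Laurent expansion in $z=\eta_j$ of the holomorphic function $\wr^n\upphi(v_1,\dots,v_n,w_\blt)$ on the open set $U_j$ of Theorem \ref{lb4}; the hypothesis that $W_j$ contains the closed disc of radius $|\eta_j(y_1)|$ is exactly the assertion $(y_1,\dots,y_n)\in U_j$, so evaluation at $z=\eta_j(y_1)$ is legitimate and absolutely convergent, which is \eqref{eq35}. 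Statement (2) is entirely parallel, with $x_j$ replaced by $\sigma_1$ (the point $y_2$) and local coordinate $\triangle_1\mu_2$: as $\sigma_1$ carries the module $\Vbb$ itself, the operator ``$Y(u,z)w$'' in \eqref{eq18} is the vertex operator of $\Vbb$, so \eqref{eq18} yields $\wr^n\upphi(v_1,v_2,v_3,\dots,v_n,w_\blt)=\wr^{n-1}\upphi(Y(v_1,z)v_2,v_3,\dots,v_n,w_\blt)$ with $z=\triangle_1\mu_2$, and the disc hypothesis on $U_2$ puts $(y_1,\dots,y_n)$ in the corresponding open set, giving convergence at $z=\mu_2(y_1)-\mu_2(y_2)$; this is \eqref{eq36}.

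Statement (3) follows by applying Corollary \ref{lb13} to $\uppsi=\wr^{n-1}\upphi$ on each fibre: it gives $\wr(\wr^{n-1}\upphi)|_x(\id\otimes\cdot)=\wr^{n-1}\upphi(\cdot)$ at every point $x$, and since the vacuum is inserted at the slot of the outermost propagation this is $\wr^n\upphi(\id,v_2,\dots,v_n,w_\blt)=\wr^{n-1}\upphi(v_2,\dots,v_n,w_\blt)$. For (4) I would invoke uniqueness of propagation: iterating Remark \ref{lb14} (equivalently Corollary \ref{lb13}), a conformal block for $\wr^n\fk X$ and $\Vbb^{\otimes n}\otimes\Wbb_\blt$ is determined by its values on $\id^{\otimes n}\otimes\scr W_{\fk X}(\Wbb_\blt)$, so $\wr^n\upphi$ is the unique conformal block with $\wr^n\upphi(\id^{\otimes n}\otimes w)=\upphi(w)$; since $S_n$ acts on $\wr^n\fk X$ by automorphisms permuting the $n$ propagated marked points (all carrying $\Vbb$), the pullback of $\wr^n\upphi$ along such an automorphism is again a conformal block sending $\id^{\otimes n}\otimes w$ to the constant $\upphi(w)$, hence equals $\wr^n\upphi$, which is exactly the symmetry in (4). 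I expect the only genuinely delicate point to be the preliminary bookkeeping of the first paragraph --- matching the coordinates of $\wr^n\fk X$ coming from the iterated construction with $\wr^n\eta_j,\triangle_i\mu_i$ and tracking which slot and variable belong to the ``new'' point; once the conventions of Sections \ref{lb2} and \ref{lb20} are fixed this is a routine computation on the model of Lemma \ref{lb3}, after which (1)--(4) are formal.
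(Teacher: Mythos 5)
Your proposal is correct and follows essentially the same route as the paper: parts (1) and (2) are obtained from Theorem \ref{lb4} (formula \eqref{eq18}) applied to $\wr^{n-1}\upphi$ after reducing to constant sections by $\scr O$-linearity, part (3) from Corollary \ref{lb13}, and part (4) from agreement on $\id^{\otimes n}\otimes\Wbb_\blt$ together with the uniqueness supplied by Proposition \ref{lb12} (your Remark \ref{lb14} formulation). Your explicit mention of the $S_n$-automorphisms of $\wr^n\fk X$ in (4) just spells out why the permuted functional is again a conformal block on the same data, which the paper leaves implicit.
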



\begin{proof}
When $v_1,v_2$ are constant sections (i.e. in $\Vbb$), (1) and (2) follow from Thm. \ref{lb4} and especially formula \eqref{eq18}. The general case follows immediately. (3) follows from Cor. \ref{lb13}. By  (3), part (4) holds when $v_1,\dots,v_n$ are all the vacuum section $\id$. Thus, it hols for all $v_1,\dots,v_n$ due to Prop. \ref{lb12}.
\end{proof}

\section{Sewing and multi-propagation}

We assume, in addition to the setting of Section \ref{lb18}, that $\wtd{\mc B}$ is a single point. Namely, we have an $(N+2M)$-pointed compact Riemann surface
\begin{align*}
\wtd{\fk X}=(\wtd C;x_1,\dots,x_N;x'_1,\dots,x_M';x_1'',\dots,x_M''),	
\end{align*}
where each connected component of $\wtd C$ contains one of $x_1,\dots,x_N$. For each $1\leq j\leq M$, $\wtd {\fk X}$ has  local coordinates $\xi_j$ at $x_j'$ and $\varpi_j$ at $x_j''$ defined respectively on neighborhoods $W_j'\ni x_j',W_j''\ni x_j''$. All $W_j',W_j''$ (where $1\leq j\leq M$) are mutually disjoint and do not contain $x_1,\dots,x_N$. $\xi_j(W_j')=\mc D_{r_j}$, and $\varpi_j(W_j'')=\mc D_{\rho_j}$. For each marked point $x_i$ we associate a $\Vbb$-module $\Wbb_i$. To $x_j'$ and $x_j''$ to we associate respectively a $\Vbb$-module $\Mbb_j$ and its contragredient $\Mbb_j'$. We set
\begin{align*}
S_{\wtd{\fk X}}=\{x_1,\dots,x_N\}.	
\end{align*}
Also, for each $1\leq i\leq N$, choose a local coordinate $\eta_i$ at $x_i$. Identify
\begin{align*}
\scr W_{\wtd{\fk X}}(\Wbb_\blt\otimes\Mbb_\blt\otimes\Mbb'_\blt)=\Wbb_\blt\otimes\Mbb_\blt\otimes\Mbb'_\blt	\qquad\text{via }\mc U(\eta_\blt,\xi_\blt,\varpi_\blt).
\end{align*}

We sew $\wtd{\fk X}$ along each $x_j',x_j''$ to obtain a family
\begin{align*}
\fk X=(\pi:\mc C\rightarrow\mc D_{r_\blt\rho_\blt}^\times;x_1,\dots,x_N),	
\end{align*}
where the points $x_1,\dots,x_N$  on $\wtd C$  and the local coordinates $\eta_1,\dots,\eta_N$ at these points extend constantly (over $\mc D_{r_\blt\rho_\blt}^\times$) to sections and local coordinates of $\fk X$, denoted by the same symbols. (Cf. Sec. \ref{lb18}.) For each $q_\blt\in\mc D_{r_\blt\rho_\blt}^\times$, we identify
\begin{align*}
\scr W_{\fk X_{q_\blt}}(\Wbb_\blt)=\Wbb_\blt\qquad\text{via }\mc U(\eta_\blt).
\end{align*}

Let $\upphi:\Wbb_\blt\otimes\Mbb_\blt\otimes\Mbb_\blt'\rightarrow\Cbb$ be a conformal block associated to $\wtd{\fk X}$ that converges a.l.u. on $\mc D_{r_\blt\rho_\blt}^\times$. Let $U_1,\dots,U_n\subset \wtd C$ be open and disjoint from each $W_j',W_j''$. For each $q_\blt\in\mc D_{r_\blt\rho_\blt}^\times$, since the fiber $\mc C_{q_\blt}$ is obtained by removing a small part of each $W_j',W_j''\subset\wtd C$ and gluing the remaining part of $\wtd C$, we see that each $U_i$ can be regarded as an open subset of the fiber $\mc C_{q_\blt}$. By Thm. \ref{lb8},  \index{S@$\wtd{\mc S}_{q_\blt}\upphi$} 
\begin{align*}
\wtd{\mc S}_{q_\blt}\upphi:=\wtd{\mc S}\upphi|_{q_\blt}	
\end{align*}
is a conformal block associated to $\fk X_{q_\blt}$. Thus, we can consider its $n$-propagation $\wr^n\wtd{\mc S}_{q_\blt}\upphi$.  In the setting of Thm. \ref{lb17}, and setting
\begin{align*}
\Conf(U_\blt\setminus\SXtd)=(U_1\times\cdots\times U_n)\cap\Conf^n(\wtd C\setminus\SXtd),	
\end{align*}
for each $v_i\in\scr V_{\wtd C}(U_i)=\scr V_{\mc C_{q_\blt}}(U_i)$ and $w_\blt\in\Wbb_\blt$,
\begin{align*}
\wr^n\wtd{\mc S}_{q_\blt}\upphi(v_1,\dots,v_n,w_\blt)\qquad\in\scr O(\Conf(U_\blt\setminus\SXtd)).
\end{align*}
This expression relies holomorphically on $q_\blt$ due to  Thm. \ref{lb4} (applied $n$ times). Thus, by varying $q_\blt$, we obtain
\begin{align}
\wr^n\wtd{\mc S}\upphi(v_1,\dots,v_n,w_\blt)\qquad\in\scr O\big(\mc D_{r_\blt\rho_\blt}^\times\times\Conf(U_\blt\setminus\SXtd)\big).	\label{eq37}
\end{align}

Since $\wr^n\upphi$ is a conformal block associated to $\wr^n\wtd{\fk X}$, we can talk about the a.l.u. convergence of its sewing $\wtd{\mc S}\wr^n\upphi$, which is a conformal block by Thm. \ref{lb8} again. In the setting of Thm. \ref{lb17}, this means  for each $v_i\in\scr V_{\wtd C}(U_i)$ and $w_\blt\in\Wbb_\blt$ the a.l.u. convergence of
\begin{align}
\wtd{\mc S}\wr^n\upphi(v_1,\dots,v_n,w_\blt)&:=\wr^n\upphi\Big(v_1,\dots,v_n,w_\blt\otimes(q_1^{\wtd  L_0}\btr\otimes_1\btl)\otimes\cdots\otimes (q_M^{\wtd  L_0}\btr\otimes_M\btl)\Big)	\label{eq39}\\
&\in\scr O(\Conf(U_\blt\setminus\SXtd))[[q_1,\dots,q_M]]\nonumber
\end{align}
on $\mc D_{r_\blt\rho_\blt}^\times\times \Conf(U_\blt\setminus\SXtd)$ in the sense of Def. \ref{lb19}. We may ask whether this convergence is true, and if it is true, whether the value of this expression at $q_\blt$ equals \eqref{eq37}. The answer is Yes.

\begin{thm}\label{lb21}
Assume $\wtd{\mc S}\upphi$ converges a.l.u. on $\mc D_{r_\blt\rho_\blt}^\times$. Then for each open $U_1,\dots,U_n\subset \wtd C$ disjoint from $W_j',W_j''$ ($1\leq j\leq N$), each $v_i\in\scr V_{\wtd C}(U_i)$ and $w_\blt\in\Wbb_\blt$, the relation
\begin{align}
\wtd{\mc S}\wr^n\upphi(v_1,\dots,v_n,w_\blt)=	\wr^n\wtd{\mc S}\upphi(v_1,\dots,v_n,w_\blt)\label{eq38}
\end{align}
holds at the level of $\in\scr O(\Conf(U_\blt\setminus\SXtd))[[q_1^{\pm 1},\dots,q_M^{\pm 1}]]$. In particular, the left hand side converges a.l.u. on $\mc D_{r_\blt\rho_\blt}^\times\times\Conf(U_\blt\setminus\SXtd)$.
\end{thm}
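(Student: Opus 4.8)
The plan is to reduce everything to a statement about a single holomorphic function on a product of discs and configuration-type domains, and then invoke the uniqueness part of propagation (Theorem \ref{lb4}) together with Theorem \ref{lb8}. First I would fix the open sets $U_1,\dots,U_n$ disjoint from all the sewing neighborhoods $W_j',W_j''$, and note that for every $q_\blt\in\mc D_{r_\blt\rho_\blt}^\times$ these open sets embed in the fiber $\mc C_{q_\blt}$, so $\scr V_{\wtd C}(U_i)=\scr V_{\mc C_{q_\blt}}(U_i)$ canonically (the $U_i$ lie in the part of $\wtd C$ untouched by sewing). This already makes the right-hand side \eqref{eq37} a well-defined element of $\scr O\bigl(\mc D_{r_\blt\rho_\blt}^\times\times\Conf(U_\blt\setminus\SXtd)\bigr)$: fiberwise it is the $n$-propagation of the conformal block $\wtd{\mc S}_{q_\blt}\upphi$, and its holomorphic dependence on $q_\blt$ is exactly the ``propagation deforms analytically'' content of Theorem \ref{lb4}, applied $n$ times to the family $\fk X\to\mc D_{r_\blt\rho_\blt}^\times$.

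The core of the argument is to identify this function with the sum of the formal series \eqref{eq39}. For this I would run an induction on $n$. The base case $n=0$ is precisely the hypothesis that $\wtd{\mc S}\upphi$ converges a.l.u., i.e. Theorem \ref{lb8}. For the inductive step, consider the one-step propagation in the variable $y_1\in U_1$. The key observation is that $\wr^n\wtd{\mc S}\upphi$, viewed as a function of $(q_\blt,y_1,\dots,y_n)$, is, by Theorem \ref{lb4} applied to the surface $\wr^{n-1}\wtd{\fk X}$ sewn along the same $M$ pairs, the unique conformal block (in $y_1$, over the base $\mc D_{r_\blt\rho_\blt}^\times\times\Conf(\{U_2,\dots,U_n\}\setminus\SXtd)$) associated to $\scr V_{\wtd{\fk X}}\otimes(\text{pullback of }\scr W_{\wr^{n-1}\wtd{\fk X}})$ that sends $\id\otimes(\text{rest})$ to $\wr^{n-1}\wtd{\mc S}\upphi$. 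On the other hand, $\wtd{\mc S}\wr^n\upphi$ sends $\id\otimes(\text{rest})$ to $\wtd{\mc S}\wr^{n-1}\upphi$, which by the inductive hypothesis equals $\wr^{n-1}\wtd{\mc S}\upphi$; and $\wtd{\mc S}\wr^n\upphi$ is a conformal block (in $y_1$) by Theorem \ref{lb8}, provided its series in $q_\blt$ converges a.l.u. So the entire content of the inductive step is packaged into two facts: (a) $\wtd{\mc S}\wr^n\upphi$ converges a.l.u.\ on $\mc D_{r_\blt\rho_\blt}^\times\times\Conf(U_\blt\setminus\SXtd)$, and (b) granting (a), the equality \eqref{eq38} follows from the uniqueness clause in Theorem \ref{lb4}, since both sides are conformal blocks agreeing on $\id\otimes(\text{rest})$.

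The main obstacle is therefore (a): establishing that the formal power series $\wtd{\mc S}\wr^n\upphi(v_1,\dots,v_n,w_\blt)$ in $q_1,\dots,q_M$ actually converges locally uniformly, with the $y_i$ ranging over compact subsets of $\Conf(U_\blt\setminus\SXtd)$. My plan here is to deduce it from the convergence of $\wtd{\mc S}\wr^{n-1}\upphi$ (inductive hypothesis) by analyzing the single extra propagation at $y_1$. Using part (1) or (2) of Theorem \ref{lb17}, when $y_1$ is moved into a coordinate neighborhood of one of the already-present points $y_2,\dots,y_n$ (or of a marked point $x_j$), the function $\wr^n\upphi(v_1,\dots,v_n,w_\blt\otimes q_\blt^{\wtd L_0}\btr\otimes\btl)$ is expressed as a Laurent-type series in $z=\mu(y_1)-\mu(y_2)$ (resp. $\eta_j(y_1)$) whose coefficients are values of $\wr^{n-1}\upphi$ on modified vectors; summing $q_\blt$-coefficient-wise and using the inductive a.l.u.\ bound plus the fact (from the strong-residue-theorem proof of Theorem \ref{lb4}, essentially \cite[10.1.1]{FB04}) that the $z$-series converges with uniform radius, one gets a local a.l.u.\ bound for $\wtd{\mc S}\wr^n\upphi$ near the ``collision'' locus. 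Then a Hartogs/analytic-continuation argument (cf.\ the openness–closedness arguments in the proofs of Theorem \ref{lb4} and Prop.\ \ref{lb12}, together with Def.\ \ref{lb19}) propagates the a.l.u.\ bound over all of $\Conf(U_\blt\setminus\SXtd)$: a formal series in $q_\blt$ whose partial sums are uniformly bounded on a dense open set of a connected domain, and which are known (fiberwise in $q_\blt$) to be restrictions of a single holomorphic function $\wr^n\wtd{\mc S}_{q_\blt}\upphi$, must converge a.l.u.\ everywhere to that function. Packaging this last step cleanly — matching the formal series \eqref{eq39} against the honest function \eqref{eq37} via Cauchy estimates on coefficients — is the part that will require the most care; everything else is bookkeeping with the trivializations from Prop.\ \ref{lb5} and Prop.\ \ref{lb16} and the identifications in diagram \eqref{eq28}.
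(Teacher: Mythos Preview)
Your inductive skeleton and your use of Theorem \ref{lb17}(1) near a marked point are exactly what the paper does, but you have inverted the logical order of the two tasks, and this is where your argument becomes shaky. The paper does \emph{not} first establish the a.l.u.\ convergence of $\wtd{\mc S}\wr^n\upphi$ and then invoke uniqueness via the vacuum; instead it proves the \emph{equality} \eqref{eq38} directly at the level of formal Laurent series in $q_\blt$, and convergence of the left side is then an immediate corollary (the right side is already in $\scr O(\mc D_{r_\blt\rho_\blt}^\times\times\Conf(U_\blt\setminus\SXtd))$). Concretely: with $U_1=W_1$, Theorem \ref{lb17}(1) gives
\[
\wtd{\mc S}\wr^n\upphi(v_1,\dots,v_n,w_\blt)=\wtd{\mc S}\wr^{n-1}\upphi(v_2,\dots,v_n,Y(v_1,z)w_1\otimes w_\circ),
\]
which by the inductive hypothesis equals $\wr^{n-1}\wtd{\mc S}\upphi(v_2,\dots,v_n,Y(v_1,z)w_1\otimes w_\circ)$, and by Theorem \ref{lb17}(1) again equals $\wr^n\wtd{\mc S}\upphi(v_1,\dots,v_n,w_\blt)$. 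This is an equality of elements of $\scr O(\Conf(U_\circ\setminus\SXtd))[[z^{\pm1},q_\blt^{\pm1}]]$, so each $q_\blt$-coefficient of the two sides agrees as a holomorphic function of $(y_1,\dots,y_n)$ on the open set where $y_1\in W_1\setminus\{x_1\}$. An open--closed connectedness argument in $y_1$ (after shrinking the $W_j',W_j''$ so that every component of $\wtd C\setminus\bigcup_j(W_j'\cup W_j'')\setminus\SXtd$ meets some $W_i\setminus\{x_i\}$) then gives equality of coefficients on all of $\Conf(U_\blt\setminus\SXtd)$. No separate convergence proof and no appeal to Corollary \ref{lb13} are needed.

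Your step (a), as you wrote it, has a genuine gap: the sentence ``a formal series in $q_\blt$ whose partial sums are uniformly bounded on a dense open set of a connected domain \dots\ must converge a.l.u.\ everywhere'' is false in general (think of $\sum_k y^{-k}q^k$). A.l.u.\ convergence of a power series does not propagate by Hartogs or analytic continuation; what propagates is the \emph{identity of the coefficient functions} with the Laurent coefficients of the known holomorphic function $\wr^n\wtd{\mc S}\upphi$, and once that identity is known globally, convergence is automatic. Your closing remark about ``matching the formal series against the honest function via Cauchy estimates on coefficients'' is in fact the whole argument---commit to it as the main line and drop the detour through (a)/(b).
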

We note that the right hand side of \eqref{eq38} is considered as a series of $q_1,\dots,q_M$ by taking Laurent series expansion.

\begin{proof}
We prove this theorem by induction on $n$. Let us assume the case for  $n-1$ is proved. For each $1\leq i\leq N$ we choose a neighborhood $W_i\subset\wtd C$ of $x_i$ on which $\eta_i$ is defined. We assume $W_i$ is small enough such that it does not intersect any $W_j',W_j''$ ($1\leq j\leq N$) and contains only $x_1$ of $x_1,\dots x_N$.

Step 1.  Note that we can clearly shrink $\mc D_{r_\blt\rho_\blt}^\times$ since the formal series in \eqref{eq38} are independent of the size of this punctured polydisc. Therefore, we can also shrink each $W_j',W_j''$ to smaller discs, so that the interior of $\wtd C\setminus\bigcup_{1\leq j\leq M}(W_j'\cup W_j'')$ (denoted by $\mbf H$) is homotopic to $\mbf H_0=\wtd C\setminus\{x_1',\dots,x_M',x_1'',\dots,x_M''\}$. Therefore, since each connected component of $\wtd C$ (and hence each one of $\mbf H_0$) intersects $x_1,\dots,x_N$, each one of $\mbf H_0$ contains at least one of $W_1,\dots,W_N$. The same is true for $\mbf H$. So each connected component of $\mbf H\setminus\SXtd$ contains at lease one $W_j\setminus\{x_j\}$.

Fix $U_2,\dots,U_n$ and $v_2,\dots,v_n,w_\blt$ as in the statement of this theorem. Let $\Omega$ be the open set of all $y_1\in\mbf H\setminus\SXtd$ contained in an open $U_1\subset\mbf H\setminus\SXtd$ such that  \eqref{eq38} holds for all $v_1\in\scr V_{\wtd C}(U_1)$. By complex analysis, if $V_1\subset\mbf H\setminus\SXtd$ is open such that $\scr V_{\wtd C}|_{V_1}$ is trivializable (e.g., when there is an injective element of $\scr O(V_1)$), then $V_1\subset\Omega$ whenever $V_1\cap \Omega\neq\emptyset$. So $\Omega$ is closed. Thus, if $\Omega$ intersects $W_1\setminus\{x_1\},\dots,W_N\setminus\{x_N\}$, then $\Omega=\mbf H\setminus\SXtd$, which finishes the proof.

Step 2. We show $\Omega$ intersects $W_1\setminus\{x_1\}$, and hence intersects the other $W_i\setminus\{x_i\}$ by a similar argument. Indeed, we shall show that \eqref{eq38} holds whenever $U_1=W_1$.

Note $w_\blt=w_1\otimes w_2\otimes\cdots\otimes w_N$ by convention. We let $w_\circ=w_2\otimes\cdots \otimes w_N$. Identify $W_1$ with $\eta_1(W_1)$ via $\eta_1$ so that $\eta_1$ is identified with the standard coordinate $z$. Let $\Conf(U_\circ\setminus\SXtd)=(U_2\times\cdots\times U_n)\cap\Conf^{n-1}(\wtd C\setminus\SXtd)$. Identify $\scr V_{\wtd C}|_{W_1}$ with $\Vbb\otimes_\Cbb\scr O_{W_1}$ using $\mc U_\varrho(\eta_1)$. Choose any $v_1\in\Vbb\otimes_\Cbb\scr O(W_1)$. Then by Thm. \ref{lb17},
\begin{align*}
\wtd{\mc S}\wr^n\upphi(v_1,v_2,\dots,v_n,w_\blt)=\wtd{\mc S}\wr^{n-1}\upphi(v_2,\dots,v_n,Y(v_1,z)w_1\otimes w_\circ)	
\end{align*}
at the level of $\scr O(\Conf(U_\circ\setminus\SXtd))[[z^{\pm 1},q_1^{\pm1},\dots,q_M^{\pm1}]]$. By our assumption on the $(n-1)$-case, this expression can be regarded as an element of (and hence this equation holds at the level of) $\scr O(\mc D_{r_\blt\rho_\blt}^\times\times\Conf(U_\circ\setminus\SXtd))[[z^{\pm 1}]]$, and we have
\begin{align*}
\wtd{\mc S}\wr^n\upphi(v_1,v_2,\dots,v_n,w_\blt)=\wr^{n-1}\wtd{\mc S}\upphi(v_2,\dots,v_n,Y(v_1,z)w_1\otimes w_\circ)	
\end{align*}
also on this level. By Thm. \ref{lb17} again, this expression equals
\begin{align*}
\wr^n\wtd{\mc S}\upphi(v_1,v_2,\dots,v_n,w_1\otimes w_\circ)	
\end{align*}
on this level. Since the above is an element of $\scr O(\mc D_{r_\blt\rho_\blt}^\times\times\Conf(U_\blt\setminus\SXtd))$, by the uniqueness of Laurent series expansion, we see the left hand side of \eqref{eq38} is also an element of this ring, and \eqref{eq38} holds on this level.
\end{proof}

\begin{rem}\label{lb23}
We discuss how to generalize Thm. \ref{lb21} to the case that $\wtd{\fk X}$ is a family of compact Riemann surfaces as in Sec. \ref{lb18}. We assume the setting of that section, together with one more assumption that $\wtd{\fk X}$ has local coordinates $\eta_1,\dots,\eta_N$ at $\sgm_1(\wtd{\mc B}),\dots,\sgm_N(\wtd{\mc B})$ so that we can identify the $\scr W$-sheaves with the free ones using the trivialization $\mc U(\eta_\blt)$ or $\mc U(\eta_\blt,\xi_\blt,\varpi_\blt)$.

We use freely the notations in Sec. \ref{lb18}. Let $\SXtd=\bigcup_{1\leq i\leq M}\sgm_i(\wtd{\mc B})$. Let
\begin{align*}
\upphi:\Wbb_\blt\otimes\Mbb_\blt\otimes\Mbb_\blt'\otimes_\Cbb\scr O_{\wtd{\mc B}}\rightarrow \scr O_{\wtd{\mc B}}	
\end{align*}
be a conformal block associated to $\wtd{\fk X}$ converging a.l.u. on $\mc B=\mc D_{r_\blt\rho_\blt}^\times\times\wtd{\mc B}$. Choose any open $U_1,\dots,U_n\subset\wtd{\mc C}$ disjoint from all $W_j',W_j''$. Choose $v_i\in\scr V_{\wtd{\fk X}}(U_i)$ and $w_\blt\in\Wbb_\blt$. Let $\Conf_{\wtd{\mc B}}(U_\blt\setminus\SXtd)$ be the set of all $(y_1,\dots,y_n)\in\Conf(U_\blt\setminus\SXtd)$ satisfying $\wtd\pi(y_1)=\cdots=\wtd\pi(y_n)$.  For each $m_j\in\Mbb_j,m_j'\in\Mbb_j'$, we have
\begin{align*}
	\wr^n\upphi(v_1,\dots,v_n,w_\blt\otimes m_\blt\otimes m_\blt')\qquad \in\scr O(\Conf_{\wtd{\mc B}}(U_\blt\setminus\SXtd))	
\end{align*}
whose restriction to each $\wtd{\mc C}_b^{\times n}$ (where $b\in\wtd{\mc B}$ is such that $\wtd{\mc C_b}$ intersects $U_1,\dots,U_n$) is $\wr^n(\upphi|_b)(v_1,\dots,v_n,w_\blt\otimes m_\blt\otimes m_\blt')$. (Indeed, this expression is a priori only a function holomorphic when restricted to each $\wtd{\mc C}_b^{\times n}$; that it is holomorphic on $\Conf_{\wtd{\mc B}}(U_\blt\setminus\SXtd)$ (i.e., holomorphic when $b$ also varies) is due to Thm. \ref{lb4}.) Thus, we can define
\begin{align}
\wtd{\mc S}\wr^n\upphi(v_1,\dots,v_n,w_\blt)\qquad\in\scr O(\Conf_{\wtd{\mc B}}(U_\blt\setminus\SXtd))[[q_1^{\pm 1},\dots,q_M^{\pm 1}]] \label{eq40}
\end{align}
using \eqref{eq39}. Similarly, with the aid of Thm. \ref{lb4} we can define
\begin{align}
\wr^n\wtd{\mc S}\upphi(v_1,\dots,v_n,w_\blt)	\qquad\in\scr O\big(\mc D_{r_\blt\rho_\blt}^\times\times\Conf_{\wtd{\mc B}}(U_\blt\setminus\SXtd)\big)	\label{eq41}
\end{align}
whose restriction to each $\mc D_{r_\blt\rho_\blt}^\times\times\wtd{\mc C}_b^{\times n}$ is $\wr^n\wtd{\mc S}(\upphi|_b)(v_1,\dots,v_n,w_\blt)$.

Consider \eqref{eq41} at the level of $\scr O(\Conf_{\wtd{\mc B}}(U_\blt\setminus\SXtd))[[q_1^{\pm 1},\dots,q_M^{\pm 1}]]$. By applying Thm. \ref{lb21} to $\upphi|_b$, we see that the coefficients before $q_1,\dots,q_N$ of \eqref{eq40} and \eqref{eq41} agree when restricted to each $\wtd{\mc C}_b^{\times n}$. So $\eqref{eq40}=\eqref{eq41}$. In particular, \eqref{eq40} converges a.l.u. on $\mc D_{r_\blt\rho_\blt}^\times\times\Conf_{\wtd{\mc B}}(U_\blt\setminus\SXtd)$.
\end{rem}

\section{A geometric construction of permutation-twisted $\Vbb^{\otimes k}$-modules}\label{lb24}

Let $\Ubb$ be a (positive energy) VOA, and let $g$ be an automorphism of $\Ubb$ fixing the vacuum and the conformal vector of $\Ubb$. In particular, $g$ preserves the $L_0$-grading of $\Ubb$. We assume $g$ has finite order $k$. 

A (finitely-admissible)  \textbf{$g$-twisted $\Ubb$-module} is a vector space $\mc W$ together with a diagonalizable operator $\wtd L_0^g$, and an operation
\begin{gather*}
Y^g:\Ubb\otimes\mc W\rightarrow	\mc W[[z^{\pm 1/k}]]\\
u\otimes w\mapsto Y^g(u,z)w=\sum_{n\in \frac 1k \Zbb}Y^g(u)_nw\cdot z^{-n-1}
\end{gather*}
satisfying the following conditions:
\begin{enumerate}
\item $\mc W$ has $\wtd L_0^g$-grading $\mc W=\bigoplus_{n\in\frac 1k\Nbb}\mc W(n)$, each eigenspace $\mc W(n)$ is finite-dimensional, and for any $u\in\Ubb$ we have
\begin{align}
[\wtd L_0^g,Y^g(u)_n]=Y^g(L_0u)_n-(n+1)Y^g(u)_n.\label{eq43}	
\end{align}
In particular, for each $w\in\mc W$ the lower truncation condition follows: $Y^g(u)_nw=0$ when $n$ is sufficiently small.
\item $Y^g(\id,z)=\id_{\mc W}$.
\item ($g$-equivariance) For each $u\in\Ubb$,
\begin{align}
Y^g(gu,z)=Y^g(u,e^{-2\im\pi}z):=\sum_{n\in \frac 1k \Zbb}Y^g(u)_nw\cdot e^{2(n+1)\im\pi}z^{-n-1}.\label{eq45}
\end{align}
\item (Jacobi identity-analytic version) Let $\mc W'=\bigoplus_{n\in\frac 1k\Nbb}\mc W(n)^*$. Let $P_n$ be the projection of $\ovl{\mc W}=\coprod_{n\in\frac 1k\Nbb}\mc W(n)^*$ (the dual space of $\mc W'$) onto $\mc W(n)$ and similarly $\ovl{\Ubb}$ (the dual space of $\Ubb'$) onto $\Ubb(n)$. Then for each $u,v\in\Ubb,w\in\mc W,w'\in\mc W'$, and for each $z\neq \xi$ in $\Cbb^\times$ with chosen $\arg \xi$, the following series of $n$
\begin{gather}
\bk{Y^g(u,z)Y^g(v,\xi)w,w'}:=\sum_{n\in\frac 1k\Nbb}	\bk{Y^g(u,z)P_nY^g(v,\xi)w,w'}\label{eq50}\\
\bk{Y^g(v,\xi)Y^g(u,z)w,w'}:=\sum_{n\in\frac 1k\Nbb}	\bk{Y^g(v,\xi)P_nY^g(u,z)w,w'}\label{eq51}\\
\bk{Y^g(Y(u,z-\xi)v,\xi)w,w'}:=\sum_{n\in\Nbb}	\bk{Y^g(P_nY(u,z-\xi)v,\xi)w,w'}\label{eq52}
\end{gather}
(where $\xi$ is fixed) converge a.l.u. for $z$ in $|z|>|\xi|$, $|z|<|\xi|$, $|z-\xi|<|\xi|$ respectively. Moreover, for any fixed $\xi\in\Cbb^\times$ with chosen argument $\arg \xi$, let $R_{\xi}$ be the ray with argument $\arg \xi$ from $0$ to $\infty$, but with $0,\xi,\infty$ removed. Any point on $R_{\xi}$ is assumed to have argument $\arg \xi$. Then the above three expressions, considered as functions of $z$ defined on $R_{\xi}$ satisfying the three mentioned inequalities respectively, can be analytically continued to the same holomorphic function on the open set
\begin{align*}
	\Updelta_\xi=\Cbb\setminus\{\xi,-t\xi:t\geq 0\},
\end{align*}
which can furthermore be extended to a multivalued holomorphic function $f_{\xi}(z)$ on $\Cbb^\times\setminus\{\xi\}$ (i.e., a holomorphic function on the universal cover of $\Cbb^\times\setminus\{\xi\}$).
\end{enumerate}

In the above Jacobi identity, if we let the series $\sum_n h_n(z)$ be any of \eqref{eq50}, \eqref{eq51}, \eqref{eq52}, then by saying that this series converges a.l.u. for $z$ in an open set $\Omega$, we mean $\sup_{z\in K}\sum_n|f_n(z)|<+\infty$ for each compact $K\subset\Omega$; the $\sup$ is over all $z\in K$ with all possible $\arg z$.

\begin{rem}
The above analytic version of Jacobi identity is equivalent to the usual algebraic one (cf. \cite[Thm. 2.4]{Hua10}). Indeed, assume without loss of generality that $gu=e^{2\im j\pi/k}u$. Then the $g$-equivariance condition shows that $z^{\frac jk}Y^g(u,z)$ is single-valued over $z$. Thus,  $z^{\frac jk}$ times \eqref{eq50}, \eqref{eq51}, \eqref{eq52} are series expansions on $|z|>|\xi|$, $|z|<|\xi|$, $|z-\xi|<|\xi|$ respectively (not necessarily restricting to $R_\xi$) of the same single-valued holomorphic function $z^{\frac jk}f_\xi$ on $\Cbb^\times\setminus\{\xi\}$. By Strong Residue Theorem, this is equivalent to that for each $m,n\in\Zbb$,
\begin{align*}
\Big(\oint_{|z|=2|\xi|}-\oint_{|z|=|\xi|/3}-\oint_{|z-\xi|=|\xi|/3}\Big)z^{\frac jk+m}(z-\xi)^nf_\xi(z)dz=0,
\end{align*}
where in these integrals, $f_\xi(z)$ is replaced by \eqref{eq50}, \eqref{eq51}, \eqref{eq52} respectively. Equivalently,
\begin{align}
&\sum_{l\in\Nbb}{\frac jk+m\choose l}\big\langle Y^g\big(Y(u)_{n+l}v,\xi\big)w,w'\big\rangle\xi^{\frac jk+m-l}\nonumber	\\
=&\sum_{l\in\Nbb}{n\choose l}(-1)^l\big\langle Y^g(u)_{\frac jk+m+n-l}Y^g(v,\xi)w,w'\big\rangle\xi^l \nonumber\\
&-\sum_{l\in\Nbb}{n\choose l}(-1)^{n-l}\big\langle Y^g(v,\xi)Y^g(u)_{\frac jk+m+l}w,w'\big\rangle\xi^{n-l}.
\end{align}
By comparing the coefficients before $\xi^{-h-1}$, the above is equivalent to that for each $m,n\in\Zbb,h\in\frac 1k\Zbb$, (suppressing $w,w'$)
\begin{align}
&\sum_{l\in\Nbb}{\frac jk+m\choose l} Y^g\big(Y\big(u\big)_{n+l}~v\big)_{\frac jk+m+h-l}  \nonumber\\
=&\sum_{l\in\Nbb}{n\choose l}(-1)^l Y^g\big(u\big)_{\frac jk+m+n-l}Y^g\big(v\big)_{h+l}-\sum_{l\in\Nbb}{n\choose l}(-1)^{n-l}Y^g\big(v\big)_{n+h-l}Y^g\big(u\big)_{\frac jk+m+l}	
\end{align}
which is the algebraic Jacobi identity.
\end{rem}

\subsection*{Construction of twisted representations associated to cyclic permutation actions of $\Vbb^{\otimes k}$}

We let $\Ubb=\Vbb^{\otimes k}$ with conformal vector $\cbf\otimes\id\otimes\cdots\otimes\id+\cdots+\id\otimes\cdots\otimes\id\otimes\cbf$, and $g$ an automorphism defined by
\begin{align*}
g:(v_1,v_2,\dots,v_k)\in\Vbb^{\otimes k}\mapsto (v_k,v_1,\dots,v_{k-1}).	
\end{align*}
For each $\Vbb$-module with $\wtd L_0$-operator, we define an associated $g$-twisted $\Ubb$-module $\mc W$ as follows. 

As a vector space, $\mc W=\Wbb$. We define $\wtd L_0^g=\frac 1k\wtd L_0$.

Let $\zeta$ be the standard coordinate of $\Cbb$. Let $\fk X=(\Pbb^1;0,\infty)$. We associate to $0,\infty$ local coordinates local coordinates $\zeta,\zeta^{-1}$  and $\Vbb$-modules $\Wbb,\Wbb'$. Note
\begin{align*}
\mc U(\zeta,\zeta^{-1}):\scr W_{\fk X}(\Wbb\otimes\Wbb')\xrightarrow{\simeq}\Wbb\otimes\Wbb'	
\end{align*}
Let  $\bk{\cdot,\cdot}$ be  the pairing for $\Wbb$ and $\Wbb'$. We define a conformal block
\begin{gather*}
\uptau_\Wbb:\scr W_{\fk X}(\Wbb\otimes\Wbb')\rightarrow\Cbb,\\
\mc U(\eta_0,\eta_\infty)^{-1}(w\otimes w')\mapsto \bk{w,w'}
\end{gather*}
whenever the local coordinates $\eta_0,\eta_\infty$ at $0,\infty$ are such that $(\Pbb^1;0,\infty;\eta_0,\eta_\infty)\simeq(\Pbb^1;0,\infty;\zeta,\zeta^{-1})$. It is easy to see that this definition is independent of the choice of such $\eta_0,\eta_\infty$. 

In the setting of Thm. \ref{lb17}, we have
\begin{gather*}
\wr^k\uptau_\Wbb:\underbrace{\scr V_{\fk X}(\Cbb^\times)\otimes\cdots \otimes\scr V_{\fk X}(\Cbb^\times)}_k\otimes\scr W_{\fk X}(\Wbb\otimes\Wbb')\rightarrow\scr O(\Conf^k(\Cbb^\times))
\end{gather*}
where all the $\otimes$ are over $\Cbb$. Let  \index{zz@$\omega_k=e^{-2\im\pi/k}$}
\begin{align*}
\omega_k=e^{-2\im\pi/k}.	
\end{align*}
Since $\zeta^k:z\mapsto z^k$ is locally injective holomorphic on $\Cbb^\times$, we have a trivilization
\begin{align*}
\mc U_\varrho(\zeta^k):	\scr V_{\fk X}|_{\Cbb^\times}\xrightarrow{\quad\simeq\quad}\Vbb\otimes_\Cbb\scr O_{\Cbb^\times}.
\end{align*}
Then, for each $w\in\Wbb,w'\in\Wbb'$, and for each $v_1,\dots,v_n\in\Vbb$ (considered as a constant section of $\Vbb\otimes_\Cbb\scr O(\Cbb^\times)$) we define, for $v_\blt=v_1\otimes\cdots\otimes v_k\in\Vbb^{\otimes k}$,
\begin{equation}
\boxed{
\begin{array}{l}
~~~~~\bk{Y^g(v_\blt,z)w,w'}\\[0.8ex]
=\wr^k\uptau_\Wbb\big(\mc U_\varrho(\zeta^k)^{-1}v_1,\dots,\mc U_\varrho(\zeta^k)^{-1}v_k,\mc U(\zeta,\zeta^{-1})^{-1}(w\otimes w')\big)\Big|_{\omega_k^{\blt-1}\sqrt[k]{z}}	
\end{array}		\label{eq42}
}
\end{equation}
where, for each $z\in\Cbb^\times$ with argument $\arg z$, \index{zz@$\omega_k^{\blt-1}\sqrt[k]{z}$}
\begin{align}
\omega_k^{\blt-1}\sqrt[k]{z}:=(\sqrt[k]{z},\omega_k\sqrt[k]{z},\omega_k^2\sqrt[k]{z},\dots,\omega_k^{k-1}\sqrt[k]{z})	\qquad \in\Conf^k(\Cbb^\times),\label{eq48}
\end{align}
and $\sqrt[k]z$ is assumed to have argument $\frac 1k\arg z$.

\eqref{eq42} is a multi-valued function of $z$, single-valued of $\sqrt[k]{z}\in\Cbb^\times$. So we have Laurent series expansion
\begin{align*}
\bk{Y^g(v_\blt,z)w,w'}=\sum_{n\in\frac 1k\Zbb}\bk{Y^g(v_\blt)_nw,w'}	z^{-n-1}
\end{align*}
which defines $Y^g(v_\blt)_n$ as a linear map $\Wbb\otimes\Wbb'\rightarrow\Cbb$.

\begin{lm}
Each $Y^g(v_\blt)_n$ is a linear operator on $\Wbb$. Moreover, \eqref{eq43} is satisfied.
\end{lm}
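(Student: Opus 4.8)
The plan is to unwind the definition \eqref{eq42} and show that the pairing $\bk{Y^g(v_\blt,z)w,w'}$, a priori only known to depend on the argument $\arg z$, actually depends $\Cbb$-linearly on $w$ with dependence on $w'$ lying in $\Wbb$ rather than $\ovl{\Wbb}$. The key input is Thm. \ref{lb17}: by part (4) the right-hand side of \eqref{eq42} is invariant under simultaneously permuting the $v_i$ and the insertion points, and by part (1) (applied iteratively, peeling off one $v_i$ at a time using the vertex operator expansions at $0$ and $\infty$, which are the only marked points of $\fk X$) the expression can be rewritten as $\uptau_\Wbb$ evaluated on vectors obtained from $w$ by finitely many applications of mode operators $Y_\Wbb(\cdot)_m$ and $Y_{\Wbb'}(\cdot)_m$. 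Since each such mode operator lands in $\Wbb$ (resp. $\Wbb'$) and the pairing $\bk{\cdot,\cdot}$ is the standard finite one, for fixed $n$ the coefficient $\bk{Y^g(v_\blt)_nw,w'}$ is a finite sum of terms each linear in $w$ and, crucially, \emph{continuous} in $w'$ for the grading — i.e.\ $Y^g(v_\blt)_n$ genuinely maps $\Wbb\to\Wbb$. First I would make this precise by choosing $w$ homogeneous of $\wtd L_0$-weight $s$ and noting that only finitely many $\wtd L_0$-homogeneous components of the output are non-zero, because each $Y(v_i)_m$ shifts the weight by a bounded amount and the lower-truncation property of $\scr V_{\fk X}$-module actions (reflected in the Laurent expansions appearing in Thm.\ \ref{lb4}/\ref{lb17}) controls the exponents; this gives $Y^g(v_\blt)_nw\in\Wbb$.

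Next I would establish \eqref{eq43}. The clean way is to use the behaviour of $\wtd L_0$ under the change-of-coordinate operators and under sewing/propagation. Concretely, conjugating the picture by the rescaling $\zeta\mapsto\lambda\zeta$ on $\Pbb^1$ (equivalently, acting by $\lambda^{\wtd L_0}$ on $\Wbb$ at $0$ and $\lambda^{-\wtd L_0}$ on $\Wbb'$ at $\infty$, which preserves $\uptau_\Wbb$ since $\bk{\lambda^{\wtd L_0}w,\lambda^{-\wtd L_0}w'}=\bk{w,w'}$) and tracking how the insertion points $\omega_k^{\blt-1}\sqrt[k]{z}$ and the trivialization $\mc U_\varrho(\zeta^k)$ transform, one gets a functional equation for $\bk{Y^g(v_\blt,z)w,w'}$ under $z\mapsto \lambda^k z$. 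Comparing with the naive scaling dictated by the defining relation \eqref{eq34} for each tensor factor $Y_\Wbb(v_i)_m$, and using $\wtd L_0^g=\tfrac1k\wtd L_0$, yields exactly the commutation relation \eqref{eq43}; the factor $\tfrac1k$ enters precisely because $z$ on the base corresponds to $z^{1/k}$ at the insertion level (the $k$-th root in \eqref{eq48}), so a rescaling of the $\Pbb^1$-coordinate by $\lambda$ rescales $z$ by $\lambda^k$. Equivalently, and perhaps more transparently, I would differentiate \eqref{eq42} in $z$ using the conformal-weight grading of $\scr V_{\fk X}$ and the fact that the conformal vector $\cbf\otimes\id\otimes\cdots+\cdots$ of $\Vbb^{\otimes k}$ acts through the sum of the $L_0$'s of the factors, which on the geometric side is implemented by the Euler vector field on the base of the propagated family.

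The main obstacle I anticipate is purely bookkeeping: keeping track of the weight shifts and of the multivaluedness (powers of $\omega_k$ and fractional powers of $z$) through the several applications of Thm.\ \ref{lb17}, so that the Laurent expansion in $z^{1/k}$ is correctly organized into $\sum_{n\in\frac1k\Zbb}Y^g(v_\blt)_nw\,z^{-n-1}$ with each coefficient an honest operator on $\Wbb$. There is no deep analytic difficulty here — convergence of all the series in sight is already guaranteed by Thm.\ \ref{lb4} and the convergence statements in Thm.\ \ref{lb17} — so the proof is essentially a careful translation of geometric facts (permutation symmetry, insertion of the vacuum, and the coordinate-rescaling behaviour of $\uptau_\Wbb$) into the algebraic identities asserted in the lemma. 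I would therefore present it as: (i) linearity and the target being $\End(\Wbb)$ via Thm.\ \ref{lb17}(1),(4) and lower truncation; (ii) the relation \eqref{eq43} via the $z\mapsto\lambda^k z$ functional equation coming from rescaling the coordinate on $\Pbb^1$, with the factor $\tfrac1k$ bookkept through \eqref{eq48}.
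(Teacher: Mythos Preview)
Your part (ii) --- the rescaling argument $\zeta\mapsto\lambda\zeta$ on $\Pbb^1$ yielding a functional equation under $z\mapsto\lambda^k z$ --- is correct and is exactly what the paper does. In fact the paper uses \emph{only} this argument, and it delivers both assertions of the lemma at once: once you know $\bk{Y^g(v_\blt,z)q^{-\wtd L_0^g}w,q^{\wtd L_0^g}w'}=\bk{Y^g(q^{L_0}v_\blt,qz)w,w'}$, taking $v_\blt,w,w'$ homogeneous forces $\bk{Y^g(v_\blt)_nw,w'}=0$ unless $w'$ has the single weight $\alpha+\beta-n-1$. So the linear functional $w'\mapsto\bk{Y^g(v_\blt)_nw,w'}$ is supported on one finite-dimensional graded piece of $\Wbb'$, hence is pairing with an honest vector of $\Wbb$; the commutation relation \eqref{eq43} drops out of the same weight count. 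Your separate part (i) is therefore unnecessary.

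More importantly, your part (i) as sketched does not work. You propose to apply Thm.~\ref{lb17}(1) iteratively, ``peeling off one $v_i$ at a time'' near $0$ or $\infty$. But the hypothesis of Thm.~\ref{lb17}(1) requires a disc $U_1$ centered at the marked point that contains \emph{only} the single insertion point $y_1$ among all the $y_\blt$. Here the $k$ insertion points $\omega_k^{\blt-1}\sqrt[k]{z}$ all sit at the same distance $|z|^{1/k}$ from $0$ (and likewise from $\infty$), so any such disc around $0$ containing one of them contains all of them; you cannot isolate a single point to peel off. Consequently the iterated expansion you describe is not available at these specific points, and the subsequent ``lower-truncation'' hand-wave (``each $Y(v_i)_m$ shifts the weight by a bounded amount'') does not control the output, since intermediate vectors in an iterated product can have arbitrarily high weight. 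Drop part (i) entirely and run only your rescaling argument; it already proves everything.
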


\begin{proof}
For each $q\in\Cbb^\times$ with chosen $\arg q$, by \eqref{eq11} we have
\begin{align*}
\mc U(q^{\frac 1k}\zeta,q^{-\frac 1k}\zeta^{-1})\mc U(\zeta,\zeta^{-1})^{-1}=q^{\frac 1k\wtd L_0}\otimes q^{-\frac 1k\wtd L_0}=q^{\wtd L_0^g}\otimes q^{-\wtd L_0^g}.	
\end{align*}
Thus
\begin{align}
&\bk{Y^g(v_\blt,z)q^{-\wtd L_0^g}w,q^{\wtd L_0^g}w'}\nonumber\\
=&\wr^k\uptau_\Wbb\big(\mc U_\varrho(\zeta^k)^{-1}v_1,\dots,\mc U_\varrho(\zeta^k)^{-1}v_k,\mc U(q^{\frac 1k}\zeta,q^{-\frac 1k}\zeta^{-1})^{-1}(w\otimes w')\big)\Big|_{\omega_k^{\blt-1}\sqrt[k]z}.	\label{eq44}
\end{align}
We have an equivalence of pointed Riemann spheres with locally injective functions and local coordinates (at the last two marked points)
\begin{align*}
&(\Pbb^1;\omega_k^{\blt-1}\sqrt[k]z,0,\infty;\zeta^k,q^{\frac 1k}\zeta,q^{-\frac 1k}\zeta^{-1})\\
\simeq&	(\Pbb^1;\omega_k^{\blt-1}\sqrt[k]{qz},0,\infty;q^{-1}\zeta^k,\zeta,\zeta^{-1})
\end{align*}
defined by $z\in\Pbb^1\mapsto \sqrt[k]{q}z\in\Pbb^1$, where $\sqrt[k]{q}$ has argument $\frac 1k\arg q$. By \eqref{eq12} and \eqref{eq10}, on $\Vbb$ we have
\begin{align*}
	\mc U_\varrho(\zeta^k)\mc U_\varrho(q^{-1}\zeta^k)^{-1}=\mc U(\varrho(\zeta^k|q^{-1}\zeta^k))=q^{L_0}.	
\end{align*}
So \eqref{eq44} equals
\begin{align*}
&\wr^k\uptau_\Wbb\big(\mc U_\varrho(q^{-1}\zeta^k)^{-1}v_1,\dots,\mc U_\varrho(q^{-1}\zeta^k)^{-1}v_k,\mc U(\zeta,\zeta^{-1})^{-1}(w\otimes w')\big)\Big|_{\omega_k^{\blt-1}\sqrt[k]{qz}}\\
=&\wr^k\uptau_\Wbb\big(\mc U_\varrho(\zeta^k)^{-1}q^{L_0}v_1,\dots,\mc U_\varrho(\zeta^k)^{-1}q^{L_0}v_k,\mc U(\zeta,\zeta^{-1})^{-1}(w\otimes w')\big)\Big|_{\omega_k^{\blt-1}\sqrt[k]{qz}}
\end{align*}
We conclude
\begin{align*}
\bk{Y^g(v_\blt,z)q^{-\wtd L_0^g}w,q^{\wtd L_0^g}w'}=\bk{Y^g(q^{L_0}v_\blt,qz)w,w'}.	
\end{align*}
So, if $L_0 v_\blt=\alpha v_\blt$, $\wtd L_0^gw=\beta w$, $\wtd L_0^g w'=\gamma w'$, then
\begin{align*}
\bk{Y^g(v_\blt,z)w,w'}=q^{\alpha+\beta-\gamma}\bk{Y^g(v_\blt,qz)w,w'},	
\end{align*}
which shows, by looking at the coefficients before  $z^{-n-1}$, that $\bk {Y^g(v_\blt)_nw,w'}$ equals $0$ unless $\alpha+\beta-\gamma-n-1=0$. This proves $Y^g(v_\blt)_n\mc W(\beta)\subset \mc W(\alpha+\beta-n-1)$. In particular, $Y^g(v_\blt)_n$ can be regarded as a linear operator on $\mc W$.
\end{proof}

Using part (3) and (4) of Thm. \ref{lb17}, it is easy to show $Y^g(\id,z)=\id_{\mc W}$ and show \eqref{eq45}. Moreover:

\begin{thm}
$Y^g$ satisfies the Jacobi identity. Therefore, $(\mc W,Y^g)$ is a $g$-twisted $\Vbb^{\otimes k}$-module.
\end{thm}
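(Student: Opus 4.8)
The plan is to verify the analytic Jacobi identity for $Y^g$ directly from its definition \eqref{eq42} in terms of the multi-propagation $\wr^k\uptau_\Wbb$, using the geometric machinery built in Sections 8 and 9. The key point is that \eqref{eq42} expresses the twisted vertex operators as evaluations of a single conformal block (the $k$-fold propagation of $\uptau_\Wbb$) at the configuration points $\omega_k^{\bullet-1}\sqrt[k]{z}$; convergence and analytic continuation properties of such propagations are exactly what Thm.~\ref{lb17} and the sewing-commutes-with-propagation theorem provide. So the strategy is to reduce the three expressions \eqref{eq50}, \eqref{eq51}, \eqref{eq52} to evaluations of $\wr^{2k}\uptau_\Wbb$ (a $2k$-fold propagation, $k$ points clustered near $\sqrt[k]{z}\cdot\omega_k^{\bullet}$ and $k$ near $\sqrt[k]{\xi}\cdot\omega_k^{\bullet}$) and show that all three are Laurent expansions, in different regions, of one and the same holomorphic function on the relevant configuration space.

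First I would set up the geometry: fix $\xi\in\Cbb^\times$ with $\arg\xi$ chosen, and consider the $2k$ marked points $\omega_k^{j-1}\sqrt[k]{z}$ and $\omega_k^{j-1}\sqrt[k]{\xi}$ ($j=1,\dots,k$) on $\Pbb^1\setminus\{0,\infty\}$, together with the two fixed marked points $0,\infty$ carrying $\Wbb,\Wbb'$. Because the $k$-th roots of a point move together under the substitution $\zeta\mapsto\omega_k\zeta$, and because the local coordinate $\zeta^k$ at each $\omega_k^{j-1}\sqrt[k]{z}$ pulls $\zeta$ back appropriately, the function $\wr^{2k}\uptau_\Wbb$ evaluated at these $2k$ points — with the $v_i$'s inserted at the $z$-roots and $u_i$'s at the $\xi$-roots (where $v_\blt = v_1\otimes\cdots\otimes v_k$ and similarly for $u_\blt$) — is, up to the identification $\mc U_\varrho(\zeta^k)$, a single-valued holomorphic function of $(\sqrt[k]{z},\sqrt[k]{\xi})$ on the configuration space $\Conf^{2k}(\Cbb^\times)$, hence a holomorphic function of $(z,\xi)$ on the universal cover of $\{(z,\xi): z\neq\xi, z\xi\neq 0\}$. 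Call this function $F(z,\xi)$. The content of the Jacobi identity is then that $\bk{Y^g(u,z)Y^g(v,\xi)w,w'}$, $\bk{Y^g(v,\xi)Y^g(u,z)w,w'}$, and $\bk{Y^g(Y(u,z-\xi)v,\xi)w,w'}$ are the expansions of $F(z,\xi)$ in the three regions $|z|>|\xi|$, $|z|<|\xi|$, $|z-\xi|<|\xi|$ respectively.

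To establish each of the three expansions, I would use part (1) and part (2) of Thm.~\ref{lb17}, which describe what happens to a propagation when one propagated point approaches a marked point (yielding $Y(v_1,z)w_j$ at that marked point) or approaches another propagated point (yielding $Y(v_1,z)v_2$, an operator-product-expansion-type collision). Specifically: bringing the $k$ points $\omega_k^{j-1}\sqrt[k]{z}$ into the neighborhood of $0$ (possible when $|z|<|\xi|$ and $|z|$ small, then extended by the analytic-continuation half of Thm.~\ref{lb17}) collapses the $v$-insertions into $Y^g(v,\xi)$ acting first, then the $u$-insertions, giving \eqref{eq51}; symmetrically collapsing the $\xi$-roots first, then the $z$-roots when $|z|>|\xi|$, gives \eqref{eq50}; and collapsing each $\omega_k^{j-1}\sqrt[k]{z}$ onto the corresponding $\omega_k^{j-1}\sqrt[k]{\xi}$ (which is where part (2) of Thm.~\ref{lb17} applies, together with the permutation-invariance part (4)) produces the $Y(u,z-\xi)v$ insertion, giving \eqref{eq52}. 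The a.l.u.\ convergence claims in each region follow from the a.l.u.\ convergence statements already packaged into Thm.~\ref{lb17}, and the fact that all three expand the \emph{same} $F$ follows because they are all obtained from the same conformal block $\wr^{2k}\uptau_\Wbb$ by taking limits/expansions that are valid on overlapping open sets, so uniqueness of analytic continuation pins them down.

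The main obstacle I anticipate is bookkeeping the branch choices and the $\omega_k$-twisting carefully enough to see that the collision of the $k$ points $\omega_k^{j-1}\sqrt[k]{z}$ near $0$ really reproduces $Y^g(v_\blt,\xi)$ with the correct fractional powers and the correct $g$-equivariance phases — i.e., matching the combinatorics of part (1) of Thm.~\ref{lb17} (which inserts one vertex operator at a time, in the single-variable coordinate $\eta$) against the simultaneous $k$-fold collision encoded in $\omega_k^{\bullet-1}\sqrt[k]{z}$. Concretely one must check that iterating part (1) over the $k$ roots, using that the local coordinate at $0$ is $\zeta$ while the collision points carry $\zeta^k$, yields precisely the normal-ordered product defining $Y^g$ on a general tensor $v_1\otimes\cdots\otimes v_k$ — this is where the change-of-coordinate factor $\mc U_\varrho(\zeta^k)$ and the substitution $\zeta\mapsto\sqrt[k]{q}\,\zeta$ from the preceding lemma do the real work. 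Once that identification is pinned down, the remaining steps (convergence, analytic continuation to $\Updelta_\xi$ and then to the universal cover of $\Cbb^\times\setminus\{\xi\}$) are direct consequences of Thm.~\ref{lb4}, Thm.~\ref{lb17}, and elementary complex analysis, so I would keep those brief.
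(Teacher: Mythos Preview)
Your overall architecture is correct and matches the paper: define the master function $f_\xi(z)$ as $\wr^{2k}\uptau_\Wbb$ evaluated at the $2k$ roots $\omega_k^{\blt-1}\sqrt[k]{z},\omega_k^{\blt-1}\sqrt[k]{\xi}$, then identify the three expressions \eqref{eq50}--\eqref{eq52} as expansions of this single function. Your treatment of \eqref{eq52} via Thm.~\ref{lb17}(2) is also what the paper does.

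The gap is in your handling of the products \eqref{eq50} and \eqref{eq51}. You propose to use Thm.~\ref{lb17}(1) (``bringing the $k$ points $\omega_k^{j-1}\sqrt[k]{z}$ into the neighborhood of $0$''). But part~(1) inserts \emph{one} untwisted $Y(v,z)$ at a marked point; iterating it $k$ times yields $Y(u_{\sigma(k)},z_{\sigma(k)})\cdots Y(u_{\sigma(1)},z_{\sigma(1)})w$, which (a) needs nested moduli $|z_{\sigma(1)}|<\cdots<|z_{\sigma(k)}|$ that the $k$-th roots do not have, and (b) is not the object appearing in \eqref{eq50}--\eqref{eq51}. What you must verify is that the \emph{intermediate-state sum}
\[
\sum_{n,\alpha}\wr^k\uptau_\Wbb(u_\bullet,w\otimes\wch m_{n,\alpha})\big|_{\omega_k^{\blt-1}\sqrt[k]{z}}\cdot \wr^k\uptau_\Wbb(v_\bullet,m_{n,\alpha}\otimes w')\big|_{\omega_k^{\blt-1}\sqrt[k]{\xi}}
\]
converges a.l.u.\ to $f_\xi(z)$. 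Thm.~\ref{lb17}(1) says nothing about such sums. The paper's move is to recognize this expression as $\wtd{\mc S}\wr^{2k}\upphi$ at $q=1$, where $\upphi$ is the tensor conformal block on $\Pbb^1_a\sqcup\Pbb^1_b$ sewn along $\infty_a,0_b$; then Thm.~\ref{lb21} (sewing commutes with propagation) gives both the a.l.u.\ convergence and the equality with $\wr^{2k}\wtd{\mc S}\upphi=\wr^{2k}\uptau_\Wbb=f_\xi$. You invoke Thm.~\ref{lb21} in your opening sentence but never deploy it in the plan; without it, the convergence of \eqref{eq50}--\eqref{eq51} and their identification with $f_\xi$ are unproved. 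Once you make this substitution, the ``bookkeeping obstacle'' you flag largely evaporates: each factor in the sewn expression is already $Y^g$ by definition \eqref{eq42}, so no collision-matching against iterated untwisted $Y$'s is needed.
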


\begin{proof}
Choose the two vectors of $\Ubb$ to be $u_\blt=u_1\otimes\cdots\otimes u_k,v_\blt=v_1\otimes\cdots\otimes v_k\in\Vbb^{\otimes k}$. Identify $\scr W_{\fk X}(\Wbb\otimes\Wbb')=\Wbb\otimes\Wbb'$ via $\mc U(\zeta,\zeta^{-1})$. Identify $\scr V_{\fk X}|_{\Cbb^\times}=\Vbb\otimes_\Cbb\scr O_{\Cbb^\times}$ via $\mc U_\varrho(\zeta^k)$. For each $\xi\in\Cbb^\times$ with chosen $\arg \xi$, we define
\begin{align}
f_\xi(z)=\wr^{2k}\uptau_\Wbb(u_1,\dots,u_k,v_1,\dots,v_k,w\otimes w')\Big|_{\omega_k^{\blt-1}\sqrt[k]{z},~\omega_k^{\blt-1}\sqrt[k]{\xi}}\label{eq47}
\end{align}
where $\omega_k^{\blt-1}\sqrt[k]{\xi}$ is a $k$-tuple understood in a similar way as \eqref{eq48}. Then $f_\xi$ is a multivalued holomorphic function which lifts to a single-valued one on the  $k$-fold covering space  $\Cbb^\times\setminus(\omega_k^{\blt-1}\sqrt[k]{\xi})$ of $\Cbb^\times\setminus\{\xi\}$.

Let $(m_{n,\alpha})_{\alpha\in\fk A}$ be a set of basis of $\Wbb(n)$ with dual basis $(\wch m_{n,\alpha})_{\alpha\in\fk A}$. Assume $0<|z|<|\xi|$. We shall show that the following infinite sum over $n$
\begin{align}
&\bk{Y^g(v_\blt,\xi)Y^g(u_\blt,z)w,w'}\nonumber\\
=&\sum_{n\in\Nbb}\sum_{\alpha\in\fk A}\wr^k\uptau_\Wbb(u_1,\dots,u_k,w\otimes \wch m_{n,\alpha})_{\omega_k^{\blt-1}\sqrt[k]{z}}\cdot 	\wr^k\uptau_\Wbb(v_1,\dots,v_k,m_{n,\alpha}\otimes w')_{\omega_k^{\blt-1}\sqrt[k]{\xi}}\label{eq46}
\end{align}
converges a.l.u. to $f_\xi(z)$. Indeed, this expression is the sewing at $q=1$ of the $2k$-propagation of the conformal block
\begin{gather*}
\upphi:\Wbb\otimes\Wbb'\otimes\Wbb\otimes\Wbb'\rightarrow\Cbb,\\ w_1\otimes w_1'\otimes w_2	\otimes w_2'\mapsto \bk{w_1,w_1'}\cdot \bk{w_2,w_2'}
\end{gather*}
associated to $(\Pbb^1_a\sqcup\Pbb^1_b;0_a,\infty_a,0_b,\infty_b)$. Here, $\Pbb^1_a,\Pbb^1_b$ are two identical Riemann spheres.  The sewing is along $\infty_a$ and $0_b$ using local coordinates $\zeta,\zeta^{-1}$, and by choosing suitable open discs $W'\ni\infty_a,W''\ni 0_b$ with radius $r,\rho$ satisfying $r\rho>1$ such that $W',W''$ do not intersect $\omega_k^{\blt-1}\sqrt[k]{z}$ and $\omega_k^{\blt-1}\sqrt[k]{\xi}$. (Note that $|z|<|\xi|$ guarantees the existence of such $W',W''$.) Since the sewing of $\upphi$ clearly converges a.l.u. on $\mc D_{r\rho}^\times$, by Thm. \ref{lb21}, the sewing at $q=1$ of $\wr^{2k}\upphi$ (which is \eqref{eq46}) converges a.l.u. (for varying $z$) to the $2k$-propagation of the sewing, which is just $f_\xi(z)$. A similar argument shows that when $0<|\xi|<|z|$, $\bk{Y^g(u_\blt,z)Y^g(v_\blt,\xi)w,w'}$ converges a.l.u. (for varying $z$) to $f_\xi(z)$.

Consider $g_\xi\in\Conf^k(\Cbb\setminus\omega_k^{\blt-1}\sqrt[k]{\xi})$ defined by
\begin{align*}
g_\xi(z_1,\dots,z_k)=\wr^{2k}\uptau_\Wbb(u_1,\dots,u_k,v_1,\dots,v_k,w\otimes w')\Big|_{z_1,\dots,z_k,~\omega_k^{\blt-1}\sqrt[k]{\xi}}.	
\end{align*}
The region $\Omega=\{z\in\Cbb^\times:|z^k-\xi|<|\xi|\}$ has $k$ connected components $\Omega_1,\dots,\Omega_k$, each one $\Omega_i$ contains exactly one element $\omega_k^{i-1}\sqrt[k]{\xi}$ of $\omega_k^{\blt-1}\sqrt[k]{\xi}$, and $\Omega_i\simeq\zeta^k(\Omega_i)$ where $\zeta^k(\Omega_i)$ is the open disc with center $\xi$ and radius $|\xi|$. By Thm. \ref{lb17} and the definition \eqref{eq42}, whenever $z_i\in\Omega_i$ for each $i$, we have  (letting $x_1,\dots,x_k$ be formal variables)
\begin{align}
&g_\xi(z_1,\dots,z_k)\nonumber\\
=&\wr^k\uptau_\Wbb(Y(u_1,x_1)v_1,\dots,Y(u_k,x_k)v_k,w\otimes w')\Big|_{\omega_k^{\blt-1}\sqrt[k]{\xi}}\Big|_{x_k=z_k^k-\xi}\cdots \Big|_{x_1=z_1^k-\xi}\nonumber\\
=&\bk{Y^g(Y(u_1,x_1)v_1\otimes\cdots\otimes Y(u_k,x_k)v_k,\xi)w,w'}\Big|_{x_k=z_k^k-\xi}\cdots \Big|_{x_1=z_1^k-\xi}.\label{eq49}
\end{align}
where the right hand side converges absolutely and successively for $x_k,x_{k-1},\dots,x_1$. Since the simultaneous Laurent series expansion of the holomorphic function $h(\varkappa_1,\dots\varkappa_k)=g_\xi(\sqrt[k]{\xi+\varkappa_1},\omega_k\sqrt[k]{\xi+\varkappa_2},\dots,\omega_k^{k-1} \sqrt[k]{\xi+\varkappa_k})$ in the region $0<|\varkappa_i|<|\xi|$ (for all $i$) clearly converges a.l.u., and since the coefficients of these series agree with those before the powers of $x_1,\dots,x_k$ on the right hand side of \eqref{eq49} (by taking Laurent series expansion through contour integrals), we see that \eqref{eq49} converges absolutely (as a multi-variable series) to $g_\xi(z_1,\dots,z_k)$  at the desired points. 

Now we  assume $0<|z-\xi|<|\xi|$, assume  $\arg z$ is such that $\sqrt[k]{z}\in\Omega_1\ni\sqrt[k]{\xi}$ (which is true when $\arg z=\arg \xi$), and  set $(z_1,\dots,z_k)=\omega_k^{\blt-1}\sqrt[k]{z}$. Then we see that $\bk{Y^g(Y(u_\blt,z-\xi)v_\blt,\xi)w,w'}$ converges a.l.u. to $g_\xi(\omega_k^{\blt-1}\sqrt[k]{z})=f_\xi(z)$. This finishes the verification of the Jacobi identity.
\end{proof}

\begin{rem}
Using Thm. \ref{lb17}, it is easy to see that
\begin{align*}
\wr^k\uptau_\Wbb(\id,\cdots,\mc U_\varrho(\zeta)^{-1}v_i,\cdots,\id,w\otimes w')|_z=\bk{Y(v,z)w,w'}.
\end{align*}
By \eqref{eq12}, $\mc U_\varrho(\zeta)\mc U_\varrho(\zeta^k)^{-1}=\mc U(\varrho(\zeta|\zeta^k))$. Thus, when $v_\blt=v_1\otimes\id\otimes\cdots\otimes\id$, \eqref{eq42} becomes
\begin{align*}
\bk{Y(\mc U(\varrho(\zeta|\zeta^k)_{\sqrt[k]z})v_1,\sqrt[k]z)w,w'}.	
\end{align*}
By \eqref{eq10}, $\varrho(\zeta|\zeta^k)_{\sqrt[k]z}$ sends $z_1^k-z$ to $z_1-\sqrt[k]z$ when $z_1$ is close to $\sqrt[k]z$. Hence this transformation equals $\delta_{k,z}$ where 
\begin{align*}
\delta_{k,z}(t)=(z+t)^{\frac 1k}-z^{\frac 1k}.	
\end{align*}
We conclude
\begin{align}
Y^g(v_1\otimes\id\otimes\cdots\otimes \id,z)=Y(\mc U(\delta_{k,z})v_1,\sqrt[k]z).	
\end{align}
This equation uniquely determines the $g$-twisted module structure of $\mc W$, since $\Vbb^{\otimes k}$ is $g$-generated by vectors of the form $v_1\otimes\id\otimes\cdots\otimes \id$.

It is not hard to check that $\mc U(\delta_{k,z})$ agrees with the operator $\Delta_k(z)$ in \cite{BDM02}. Thus, our $g$-twisted module $(\mc W,Y^g)$ agrees with the one $(T^k_g(\Wbb),Y_g)$ in \cite[Thm. 3.9]{BDM02}.
\end{rem}

\appendix

\section{Strong residue theorem for analytic families of curves}\label{lb6}

Let $\fk X=(\pi:\mc C\rightarrow\mc B;\sgm_1,\dots,\sgm_N)$ be a (holomorphic) family of $N$-pointed compact Riemann surfaces. Recall the definition in Sec. \ref{lb2}. In particular, we assume each connected component of each fiber $\mc C_b=\pi^{-1}(b)$ contains at least one of $\sgm_1(b),\dots,\sgm_N(b)$. We let $\scr E$ be a holomorphic vector bundle on $\mc C$ with finite rank, and let $\scr E^\vee$ be its dual bundle.

We assume that $\fk X$ is equipped with local coordinates $\eta_1,\dots,\eta_N$ at $\sgm_1(\mc B),\dots,\sgm_N(\mc B)$ respectively. Assume for each $j$ that  $\eta_j$ is defined on a neighborhood $W_j\subset\mc C$ of $\sgm_j(\mc B)$ which intersects only $\sgm_j(\mc B)$ among $\sgm_1(\mc B),\dots,\sgm_N(\mc B)$, and that there is a  trivialization
\begin{align*}
\scr E_j|_{W_j}\simeq E_j\otimes_\Cbb\scr O_{W_j}	
\end{align*}
with dual trivialization
\begin{align*}
\scr E_j^\vee|_{W_j}\simeq E_j^\vee\otimes_\Cbb\scr O_{W_j},	
\end{align*}
where $E_j$ is a finite-dimensional vector space and $E_j^\vee$ is its dual space. We identify $\scr E_j|_{W_j}$ and $\scr E^\vee|_{W_j}$ with their trivializations.

For each $j$, we identify
\begin{align*}
W_j=(\pi,\eta_j)(W_j)	\qquad\text{via }(\pi,\eta_j).
\end{align*}
Then $W_j$ is a neighborhood of $\mc B\times\{0\}$ in $\mc B\times\Cbb$. We let $z$ be the standard coordinate of $\Cbb$. Consider
\begin{gather}
s_j=\sum_{n\in\Zbb} e_{j,n}\cdot z^n\qquad \in\big(E_j\otimes_\Cbb\scr O(\mc B) \big)((z)),\label{eq19}	
\end{gather}
where each $e_{j,n}\in E_j\otimes_\Cbb\scr O(\mc B)$ is $0$ when $n$ is sufficiently small. Considering $e_{j,n}$ as an $E_j$-valued holomorphic on $\scr O(\mc B)$, we let $e_{j,n}(b)\in E_j$ be its value at $b\in\mc B$. Then $s_j(b)$, the restriction of $s_j$ to $\mc C_b$, is represented by
\begin{align*}
s_j(b)=\sum_n e_{j,n}(b)z^n\qquad\in E_j((z)).	
\end{align*} 
Suppose that $s$ is a section of $\scr E(\star\SX)$ defined on $W_j$. Then $s|_{W_j}=s|_{W_j}(b,z)$ is an $E_j$-valued meromorphic function on $W_j$ with poles at $z=0$. We say that $s$ \textbf{has series expansion $s_j$ at $\sgm_j(\mc B)$} if for each $b\in\mc B$, the meromorphic function $s|_{W_j}(b,z)$ of $z$ has Laurent series expansion \eqref{eq19} at $z=0$.

For each $b\in\mc B$, choose $\sigma_b\in H^0(\mc C_b,\scr E^\vee|_{\mc C_b}\otimes\omega_{\mc C_b}(\star S_{\fk X_b}))$. Then in $W_{j,b}=W_j\cap\pi^{-1}(B)$, $\sigma_b$ can be regarded as an $E_j^\vee\otimes dz$-valued holomorphic function but with possibly finite poles at $z=0$. So it has series expansion at $z=0$:
\begin{align*}
\sigma_b|_{W_{j,b}}(z)=\sum_n\phi_{j,n}z^ndz\qquad \in E_j^\vee((z))dz	
\end{align*}
where $\phi_{j,n}\in E_j^\vee$. We define the residue pairing
\begin{align}
\Res_j \bk{s_j,\sigma_b}=&\Res_{z=0}\langle s_j(b),\sigma_b|_{U_j,b}(z) \rangle\nonumber\\
=&\Res_{z=0}\bigg(\Big\langle \sum_n e_{j,n}(b)z^n, \sum_n \phi_{j,n}z^n  \Big\rangle dz\bigg).
\end{align} 
in which the pairing between $E_j$ and $E_j^\vee$ is denoted by $\bk{\cdot,\cdot}$.

We now prove the Strong Residue Theorem for $\scr E$. Our proof is inspired by that of \cite[Thm. 1.22]{Ueno08}.

\begin{thm}\label{lb7}
For each $1\leq j\leq N$, choose $s_j$ as in \eqref{eq19}.  Then the following statements are equivalent.
	
(a) There exists $s\in H^0(\mc C,\scr E(\star\SX))$ whose series expansion at $\sgm_j(\mc B)$ (for each $1\leq j\leq N$) is $s_j$.

(b) For each $b\in\mc B$, there exists $s_b\in H^0(\mc C_b,\scr E|_{\mc C_b}(\star\SXb))$ whose series expansion at $\sgm_j(b)$ (for each $1\leq j\leq N$) is $s_j(b)$.
	
(c) For any $b\in\mc B$ and any $\sigma_b\in H^0\big(\mc C_b,\scr E^\vee|_{\mc C_b}\otimes\omega_{\mc C_b}(\star S_{\fk X_b})\big)$,
	\begin{align}
		\sum_{j=1}^N\Res_j\bk{s_j,\sigma_b}=0.
	\end{align}
	Moreover, when these statements hold, there is only one $s\in H^0(\mc C,\scr E(\star\SX))$ satisfying (a).
\end{thm}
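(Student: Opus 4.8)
The plan is to dispose of uniqueness and the two easy implications, reduce existence to a local statement over the base, and then run a relative cohomology argument on the total space, in the spirit of \cite[Thm.\ 1.22]{Ueno08}.

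First I would settle uniqueness, and use it to reduce. If $s,s'\in H^0(\mc C,\scr E(\star\SX))$ both have series expansion $s_j$ at $\sgm_j(\mc B)$ for every $j$, then $s-s'$ vanishes to infinite order along each $\sgm_j(\mc B)$, so on every fiber $\mc C_b$ the section $(s-s')|_{\mc C_b}$ vanishes to infinite order at a marked point of each connected component and hence is identically zero; thus $s=s'$. This also shows that a solution produced locally over an open cover of $\mc B$ is unique on overlaps and therefore glues, so for existence we may assume $\mc B$ is a small polydisc, in particular connected, so that on the fibers the rank of $\scr E$, the genus, and the degrees of $\scr E$ on the connected components are all constant. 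The implication (a)$\Rightarrow$(b) is restriction to fibers. For (b)$\Rightarrow$(c): given $s_b$ as in (b), the contraction $\bk{s_b,\sigma_b}$ is a global meromorphic $1$-form on $\mc C_b$ with poles only along $\SXb$, whose residue at $\sgm_j(b)$ is by construction $\Res_j\bk{s_j,\sigma_b}$; the classical residue theorem on each connected component of $\mc C_b$ then gives $\sum_j\Res_j\bk{s_j,\sigma_b}=0$.

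For the substantive implication (c)$\Rightarrow$(a) I would argue as follows. Fix $p$ so that every $s_j$ has poles of order $\le p$; the goal is to produce $s\in H^0(\mc C,\scr E(p\SX))$. By the upper-semicontinuity theorem there is, after shrinking $\mc B$, an $m_0$ such that for all $m\ge m_0$ and all $b$ one has $h^0(\mc C_b,\scr E|_{\mc C_b}(-m\SXb))=0$, and then, by Riemann--Roch on the fibers, $h^1(\mc C_b,\scr E|_{\mc C_b}(-m\SXb))$ is independent of $b$; hence, by Grauert's coherence and base-change theorems, $\pi_*\bigl(\scr E(-m\SX)\bigr)=0$ while $R^1\pi_*\bigl(\scr E(-m\SX)\bigr)$ is locally free and compatible with base change. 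Fixing such an $m$ and applying $\pi_*$ to
\begin{align*}
0\longrightarrow\scr E(-m\SX)\longrightarrow\scr E(p\SX)\longrightarrow\mc Q\longrightarrow 0,
\end{align*}
where $\mc Q$ is supported near $\SX$ with stalk $z^{-p}E_j[[z]]/z^m E_j[[z]]$ along $\sgm_j(\mc B)$ in the coordinate $\eta_j$ and the chosen trivialization, gives an exact sequence of $\scr O_{\mc B}$-modules
\begin{align*}
0\longrightarrow\pi_*\bigl(\scr E(p\SX)\bigr)\xrightarrow{\ \rho\ }\pi_*\mc Q\xrightarrow{\ \delta\ }R^1\pi_*\bigl(\scr E(-m\SX)\bigr),
\end{align*}
in which $\rho$ sends a section to its truncated Laurent data $(s_j\bmod z^m)_j$ at the marked sections. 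The given family $(s_j)_j$, truncated mod $z^m$, is a global section $\bar s$ of $\pi_*\mc Q$, and the crux is that $\delta(\bar s)=0$: since $R^1\pi_*\bigl(\scr E(-m\SX)\bigr)$ is locally free it suffices to check this fiberwise, and by base change $\delta(\bar s)|_b=\delta_b\bigl((s_j(b))_j\bmod z^m\bigr)$ for the connecting map $\delta_b$ of the fiber $\mc C_b$. On the compact surface $\mc C_b$, Serre duality (componentwise) identifies $R^1$ with the dual of $H^0(\mc C_b,\scr E^\vee|_{\mc C_b}\otimes\omega_{\mc C_b}(m\SXb))$ and identifies $\delta_b$ with the residue pairing $(t_j)_j\mapsto\bigl(\sigma_b\mapsto\sum_j\Res_j\bk{t_j,\sigma_b}\bigr)$; hypothesis (c), applied to the $\sigma_b$ of pole order $\le m$, says precisely that this vanishes. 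Hence $\delta(\bar s)=0$, so by exactness $\bar s$ lies in the image of $\rho$, and there is $s^{(m)}\in H^0(\mc C,\scr E(p\SX))$ whose expansion agrees with $s_j$ modulo $z^m$ at every $\sgm_j(\mc B)$. Finally, for $m'\ge m\ge m_0$ the difference $s^{(m')}-s^{(m)}$ lies in $H^0(\mc C,\scr E(-m\SX))=0$, so $s^{(m)}$ is independent of $m$; this common section $s$ then agrees with $s_j$ modulo $z^m$ for every $m\ge m_0$, hence exactly, which proves (c)$\Rightarrow$(a).

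The hard part is not the homological bookkeeping but the analytic input: choosing $m_0$ uniformly over the shrunk base via semicontinuity, knowing that $\pi_*$ and $R^1\pi_*$ of the coherent sheaves in play are coherent, that $R^1\pi_*\bigl(\scr E(-m\SX)\bigr)$ is locally free with the base-change property, and that the connecting homomorphism $\delta$ is compatible with restriction to a fiber. These are the standard coherence, semicontinuity, and base-change theorems for proper holomorphic maps, but it is precisely their use for general analytic (as opposed to algebraic or projective) families that makes a self-contained treatment necessary; the classical strong residue theorem is recovered as the case $\mc B=\{\mathrm{pt}\}$.
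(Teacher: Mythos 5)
Your proposal is correct and follows essentially the same route as the paper's proof: reduce to a local statement over the base, kill $\pi_*\scr E(-m\SX)$ via Serre vanishing plus semicontinuity, push forward the short exact sequence $0\to\scr E(-m\SX)\to\scr E(p\SX)\to\mc Q\to 0$, identify the obstruction $\delta(\bar s)$ fiberwise with the Serre-duality residue pairing so that hypothesis (c) kills it, and then pass to the limit in $m$ using $\pi_*\scr E(-m\SX)=0$. The only difference is presentational — you phrase the obstruction via the sheaf-level connecting map and base change where the paper writes an explicit \v{C}ech $0$-cochain on the cover $\{W_0,W_1,\dots,W_N\}$ — which does not change the substance.
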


\begin{proof}
(a) trivially implies (b). That (b) implies (c) follows from Residue theorem (i.e., Stokes theorem): The evaluation between $s_b$ and $\sigma_b$ is an element of $H^0(\mc C_b,\omega_{\mc C_b}(\star S_{\fk X_b}))$ whose total residue over all poles is $0$.
	
If $s$ satisfies (a), then for each $b\in\mc B$, $s|_{\mc C_b}$ is uniquely determined by its series expansions near $\sgm_1(b),\dots,\sgm_N(b)$ (since each component of $\mc C_b$ contains some $\sgm_j(b)$). Therefore the sections satisfying (a) is unique.
	
Now assume (c) is true. We shall prove (a).  Suppose that for each $b\in\mc B$ we can find a neighborhood $V\subset\mc B$ such that an $s$ satisfying (a) exists for the family $\fk X_V$. Then, by the uniqueness proved above, we can glue all these locally defined $s$ to a global one. Thus, we may shrink $\mc B$ to a small neighborhood of a given $b_0\in\mc B$ when necessary. 

We first note that, by replacing $\mc B$ with a neighborhood of a given $b_0\in\mc B$, we may assume $\pi_*\scr E(-k\SX)=0$ for sufficiently large $k$. Indeed, choose any $b_0\in\mc B$. Then by Serre duality,
\begin{align}
H^0\big(\mc C_b,\scr E|_{\mc C_b}(-kS_{\fk X_b})\big)\simeq H^1\big(\mc C_b,\scr E^\vee|_{\mc C_b}\otimes\omega_{\mc C_b}(kS_{\fk X_b})\big),	\label{eq21}
\end{align}
which, by Serre vanishing theorem, equals $0$ for some $k=k_0$ when $b=b_0$. Since $\pi$ is open, $\fk X$ is a flat family (\cite[Thm. II.2.13]{GPR} or \cite[Sec. 3.20]{Fis76}). Thus, we can apply the upper-semicontinuity theorem (\cite[Thm. III.4.7]{GPR} or \cite[Thm. III.4.12]{BS76}) to see that \eqref{eq21} vanishes for $k=k_0$ and (by shrinking $\mc B$ to a neighborhood of $b_0$) any $b\in\mc B$. Since the vector space $H^0\big(\mc C_b,\scr E|_{\mc C_b}(-kS_{\fk X_b})\big)$ shrinks as $k$ increases, \eqref{eq21} is constantly zero for all $b\in\mc B$ and $k\geq k_0$. This implies $\pi_*\scr E(-k\SX)=0$ for all $k\geq k_0$ (\cite[Thm. III.4.7-(d)]{GPR} or \cite[Cor. III.3.5]{BS76}).
	
Choose $p\in\mathbb N$ such that for each $1\leq j\leq N$, the $e_{j,n}$ in \eqref{eq19} equals $0$ when $n<-p$. For any $k\geq k_0$, as $\pi_*\scr E(-k\SX)=0$, the short exact sequence
\begin{align*}
0\rightarrow \scr E(-k\SX)\rightarrow \scr E(p\SX)\rightarrow \scr E(p\SX)/\scr E(-k\SX)\rightarrow 0
\end{align*}
induces a long one
\begin{align}
0\rightarrow\pi_*\scr E(p\SX)\rightarrow \pi_*\big(\scr E(p\SX)/\scr E(-k\SX)\big)\xrightarrow{\delta}R^1\pi_*\scr E(-k\SX).\label{eq22}
\end{align}

For each $1\leq j\leq N$, set $s_j|_k=\sum_{n<k}e_{j,n}\cdot z^n$, which can be regarded as a section in $\scr E(p\SX)(W_j)$. Let $W_0=\mc C\setminus\SX$. Then $\fk U=\{W_0,W_1,\dots,W_N \}$ is an open cover of $\mc C$. Define \v{C}ech $0$-cocycle $\psi=(\psi_j)_{0\leq j\leq N}\in Z^0(\fk U,\scr E(p\SX)/\scr E(-k\SX))$ by setting
\begin{align*}
\psi_0=0,\qquad\psi_j=s_j|_k \quad(1\leq j\leq N).
\end{align*}
Then $\delta\psi=\big((\delta\psi)_{i,j}\big)_{0\leq i,j\leq N}\in Z^1(\fk U,\scr E(-k\SX))$ is described as follows: $(\delta\psi)_{0,0}=0$; if $i,j>0$ then $(\delta\psi)_{i,j}$ is not defined since $W_i\cap W_j=\emptyset$; if $1\leq j\leq N$ then $(\delta\psi)_{j,0}=-(\delta\psi)_{0,j}$ equals $s_j|_k$ (considered as a section in $\scr E(-k\SX)(W_j\cap W_0)$).
	
Consider $\delta\psi$ as a section of $R^1\pi_*\scr E(-k\SX)$. We shall show that $\delta\psi=0$.  By the fact that \eqref{eq21} vanishes and the invariance of Euler characteristic,  $\dim H^1\big(\mc C_b,(\scr E|_{\mc C_b})(-kS_{\fk X_b})\big)$ is locally constant over $b\in\mc B$, which shows  that $R^1\pi_*(\mc C,\scr E(-k\SX))$ is locally free and its fiber at $b$ is naturally equivalent to $H^1\big(\mc C_b,(\scr E|_{\mc C_b})(-kS_{\fk X_b})\big)$. (Cf. \cite[Thm. III.4.7]{GPR}  or \cite[Thm. III.4.12]{BS76}.) Thus, it suffices to show that for each fiber $\mc C_b$, the restriction $\delta\psi|_{\mc C_b}\in H^1(\mc C_b,\scr E|_{\mc C_b}(-k S_{\fk X_b}))$ is zero.

The residue pairing for the Serre duality
\begin{align*}
H^1(\mc C_b,\scr E|_{\mc C_b}(-k\SX))\simeq H^0\big(\mc C_b,\scr E^\vee|_{\mc C_b}\otimes\omega_{\mc C_b}(kS_{\fk X_b})\big)^*
\end{align*}
applied to $\delta\psi|_{\mc C_b}$ and any $\sigma_b\in H^0\big(\mc C_b,\scr E^\vee|_{\mc C_b}\otimes\omega_{\mc C_b}(kS_{\fk X_b})\big)$, is given by
\begin{align*}
\bk{\delta\psi|_{\mc C_b},\sigma_b}=\sum_{j=1}^N\Res_j\bk{s_j|_k,\sigma_b}.
\end{align*}
Since for each $1\leq j\leq N$, $\bk{s_j-s_j|_k,\sigma_b}$ has removable singularity at $z=0$, we have $\Res_j\bk{s_j-s_j|_k,\sigma_b}=0$. Therefore, 
\begin{align*}
\bk{\delta\psi|_{\mc C_b},\sigma_b}=\sum_{j=1}^N\Res_j\bk{s_j,\sigma_b}=0.
\end{align*}
Thus $\delta\psi|_{\mc C_b}=0$ for any $b$.  This proves that $\delta\psi=0$. 
	
By \eqref{eq22}, for each $k\geq k_0$, there is a unique $s|_k\in\big(\pi_*\scr E(p\SX)\big)(\mc B)= H^0(\mc C,\scr E(p\SX))$ which is sent to $\psi\in \pi_*\big(\scr E(p\SX)/\scr E(-k\SX)\big)(\mc B)$. So near $\sgm_j(\mc B)$, $s|_k$ has series expansion
\begin{align}
s|_k=s_j|_k+\bullet z^k+\bullet z^{k+1}+\cdots.\label{eq23}
\end{align}
By this uniqueness, we must have $s|_{k_0}=s|_{k_0+1}=s|_{k_0+2}=\cdots$. Let $s=s|_{k_0}$. Then $s$ has series expansion $s_j$ at $\sgm_j(\mc B)$ for each $j$.
\end{proof}

We remark that the above proof also applies to locally free sheaves over a proper flat family of pointed complex curves (with at worst nodal singularities) such that each $S_{\fk X_b}$ does not intersect the node of $\mc C_b$, and that $S_{\fk X_b}$ intersects each irreducible component of $\mc C_b$. This is because the residue pairing for Serre duality is described in the same way as in the smooth case.

\printindex	
	
	\noindent {\small \sc Yau Mathematical Sciences Center, Tsinghua University, Beijing, China.}
	
	\noindent {\textit{E-mail}}: binguimath@gmail.com\qquad bingui@tsinghua.edu.cn

\begin{thebibliography}{999999}
		\footnotesize	
		
		
		
		

\bibitem[BDM02]{BDM02}
Barron, K., Dong, C. and Mason, G., 2002. Twisted sectors for tensor product vertex operator algebras associated to permutation groups. Communications in mathematical physics, 227(2), pp.349-384.


\bibitem[BHL08]{BHL07}
Barron, K., Huang, Y. Z., \& Lepowsky, J. (2007). An equivalence of two constructions of permutation-twisted modules for lattice vertex operator algebras. Journal of Pure and Applied Algebra, 210(3), 797-826.


\bibitem[BS76]{BS76}
Bănică, C. and Stănăşilă, O., 1976. Algebraic methods in the global theory of complex spaces.

		
\bibitem[Buhl02]{Buhl02}
Buhl, G., 2002. A spanning set for VOA modules. Journal of Algebra, 254(1), pp.125-151.

\bibitem[Cod19]{Cod19}
Codogni, G., 2019. Vertex algebras and Teichm\"uller modular forms. arXiv preprint arXiv:1901.03079.	


\bibitem[DGT19a]{DGT19a}
Damiolini, C., Gibney, A., and Tarasca, N., 2019. Conformal blocks from vertex algebras and their connections on $\ovl{\mc M}_{g,n}$.  To appear in Geometry \& Topology, arXiv:1901.06981.

\bibitem[DGT19b]{DGT19b}
Damiolini, C., Gibney, A. and Tarasca, N., 2019. On factorization and vector bundles of conformal blocks from vertex algebras. arXiv preprint arXiv:1909.04683.


\bibitem[DLM97]{DLM97}
Dong, C., Li, H. and Mason, G., 1997. Regularity of Rational Vertex Operator Algebras. Advances in Mathematics, 132(1), pp.148-166.


\bibitem[DXY21]{DXY21}
Dong, C., Xu, F. and Yu, N., 2021. Permutation orbifolds and associative algebras. Science China Mathematics, pp.1-10.



\bibitem[FB04]{FB04}
Frenkel, E. and Ben-Zvi, D., 2004. Vertex algebras and algebraic curves (No. 88). American Mathematical Soc..


\bibitem[FHL93]{FHL93}
Frenkel, I., Huang, Y. Z., \& Lepowsky, J. (1993). On axiomatic approaches to vertex operator algebras and modules (Vol. 494). American Mathematical Soc..




\bibitem[Fis76]{Fis76}
Fischer, G., Complex analytic geometry. Lecture Notes in Mathematics, Vol. 538. Springer-Verlag, Berlin-New York, 1976. vii+201 pp.


\bibitem[GPR]{GPR}
Grauert, H., Peternell, T.,  Remmert, R. eds., 1994. Several complex variables VII: sheaf-theoretical methods in complex analysis (Vol. 74, p. 369). Springer.


\bibitem[Gui23]{Gui23}
Gui, B., 2023. Convergence of Sewing Conformal Blocks. Communications in Contemporary Mathematics.


\bibitem[Gui21]{Gui21}
Gui, B., 2021. Genus-zero Permutation-twisted Conformal Blocks for Tensor Product Vertex Operator Algebras: The Tensor-factorizable Case. arXiv: 2111.04662.



\bibitem[Hua97]{Hua97}
Huang, Y.Z., 1997. Two-dimensional conformal geometry and vertex operator algebras (Vol. 148). Springer Science \& Business Media.

\bibitem[Hua10]{Hua10}
Huang, Y. Z. (2010). Generalized twisted modules associated to general automorphisms of a vertex operator algebra. Communications in Mathematical Physics, 298(1), 265-292.

\bibitem[Li96]{Li96}
Li, H.S., 1996. Local Systems of Twisted Vertex Operators. In Moonshine, the Monster, and Related Topics: Joint Research Conference on Moonshine, the Monster, and Related Topics, June 18-23, 1994, Mount Holyoke College, South Hadley, Massachusetts (Vol. 193, p. 203). American Mathematical Soc..

\bibitem[TUY89]{TUY89}
Tsuchiya, A., Ueno, K. and Yamada, Y., 1989. Conformal field theory on universal family of stable curves with gauge symmetries. In Integrable Sys Quantum Field Theory (pp. 459-566). Academic Press.

\bibitem[Ueno08]{Ueno08}
Ueno, K., 2008. Conformal field theory with gauge symmetry (Vol. 24). American Mathematical Soc..


\bibitem[Zhu94]{Zhu94}
Zhu, Y., 1994. Global vertex operators on Riemann surfaces. Communications in Mathematical Physics, 165(3), pp.485-531.

\bibitem[Zhu96]{Zhu96}
Zhu, Y., 1996. Modular invariance of characters of vertex operator algebras. Journal of the American Mathematical Society, 9(1), pp.237-302.
		
		
		
		
		
	\end{thebibliography}
\end{document}